\newcommand{\m}{\mathcal M}
\newcommand{\rad}{\mathrm{rad}}
\newcommand{\F}{\mathbb{F}}
\newcommand{\pp}{\mathcal P}
\newtheorem{theorem}{Theorem}[section]
\newtheorem{lemma}[theorem]{Lemma}
\newtheorem{conjecture}[theorem]{Conjecture}
\newtheorem{proposition}[theorem]{Proposition}
\theoremstyle{definition}
\newtheorem{definition}[theorem]{Definition}
\theoremstyle{remark}
\newtheorem{remark}[theorem]{Remark}
\author{Herivelto Borges}
\address{Universidade de S\~{a}o Paulo, Instituto de Ci\^{e}ncias Matem\'{a}ticas e de Computa\c{c}\~{a}o, S\~{a}o
Carlos, SP 13560-970, Brazil}
\curraddr{}
\email{hborges@icmc.usp.br}
\thanks{The first  author was supported by CNPq
(Brazil), grants 406377/2021-9 and 310194/2022-9, and by FAPESP (Brazil), grant 2022/15301-52}
\thanks{}
\author{Lucas Reis}
\address{Departamento de Matem\'{a}tica, Universidade Federal de Minas Gerais, UFMG, Belo Horizonte, MG, 30123-970, Brazil.}
\curraddr{}
\email{lucasreismat@mat.ufmg.br}
\thanks{The second author was supported by CNPq (Brazil), grant 309844/2021-5 and by FAPEMIG (Brazil), grant APQ-01712-23.}
\keywords{ minimal value set polynomials, linearized polynomials, Frobenius nonclassical curves}
\subjclass[2010]{Primary 11T06, Secondary 11G20, 12E20}
\begin{document}

\title{Minimal value set polynomials  } 
\begin{abstract}A well-known problem in the theory of polynomials over finite fields is the characterization of minimal value set polynomials (MVSPs) over the finite field $\mathbb{F}_q$, where $q = p^n$. These are the nonconstant polynomials $F \in \mathbb{F}_q[x]$ whose value set $V_F = \{F(a) : a \in \mathbb{F}_q\}$ has the smallest possible size, namely $\lceil \frac{q}{\deg(F)} \rceil$. 
In this paper, we describe the family $\mathcal{A}_q$ of all subsets $S \subseteq \mathbb{F}_q$ with $\# S>2$ that can be realized as the value set of an MVSP $F \in \mathbb{F}_q[x]$. Affine subspaces of $\mathbb{F}_q$ are a fundamental type of set in $\mathcal{A}_q$, and we provide the complete list of all MVSPs with such value sets. Building on this, we present a conjecture that characterizes all MVSPs $F \in \mathbb{F}_q[x]$ with $V_F=S$ for any $S \in \mathcal{A}_q$. The conjecture  is  confirmed by prior results for $q \in\left\{p, p^2, p^3\right\}$ or $\# S \geq p^{n / 2}$, and  additional instances, including the cases for $q=p^4$  and  $\# S>p^{n / 2-1}$, are proved. We further show  that the conjecture leads to the complete characterization
of the $\F_{q}$-Frobenius nonclassical curves of type $y^d=f(x)$,  which we establish as a theorem for $q=p^4$. 
\end{abstract}

\maketitle

\section{Introduction}
Let $q=p^n$ be a power of a prime $p$, and let $\mathbb{F}_q$ be the finite field with $q$ elements. A polynomial $F \in \mathbb{F}_q[x]$ of positive degree $\operatorname{deg} F$ is called a minimal value set polynomial (MVSP) if its value set $V_F:=\left\{F(\alpha): \alpha \in \mathbb{F}_q\right\}$ has the least possible size $\# V_F=\lfloor(q-1) / \operatorname{deg} F\rfloor+1$. The MVSPs were introduced by Carlitz, Lewis, Mills, and Straus in the early 1960s, mainly motivated by a generalization of Waring's problem modulo a prime. In  ~\cite{Carlitz, Mills}, they provided significant results on the theory of MVSPs, culminating in the complete characterization of such polynomials when $q\in \{p, p^2\}$. Moreover,  in~\cite{Mills}, Mills suggests that all MVSPs fall into two main families of polynomials, although he did not pose this as a conjecture. Since these two early works, several authors have studied polynomials over finite fields with small value sets \cite{Bor2,Cald1,Cald2,WSC}. 
In \cite{Bor2},  the authors described all MVSPs in $\mathbb{F}_q[x]$ whose value set is a subfield of $\mathbb{F}_q$. As a consequence, they found MVSPs that Mills suspected did not exist \cite[Section $6$]{Bor2}. In ~\cite{Bor0}, the author established  a detailed  connection between minimal value set polynomials in $\mathbb{F}_q[x]$ and   $\mathbb{F}_q$-Frobenius nonclassical curves $\mathcal{X} \subseteq \mathbb{P}^2$. These curves, introduced by St\"ohr and Voloch in~\cite{SV},  are known for their special geometric and arithmetic features, being particularly useful for applications in coding theory and finite geometry ~\cite{Bor1,Bor3}. The connection between MVSPs and $\mathbb{F}_q$-Frobenius nonclassical curves reveals that the condition for the irreducible components of the curve with separated variables
$$
F(x)-G(y)=0
$$
to be $\mathbb{F}_q$-Frobenius nonclassical curves is equivalent to $F$ and $G$ being MVSPs in $\mathbb{F}_q[x]$ with the same value set. Recently, all MVSPs over fields of size $p^3$ were characterized in \cite{BR}, thus  completing  the  list of all $\mathbb{F}_{p^{3}}$-Frobenius nonclassical curves that are cyclic covers of $\mathbb{P}^1$ .

Despite the relevance of MVSPs and the existing results on such polynomials, their comprehensive characterization remains elusive. There are two primary approaches concerning their characterization. One aims to describe these polynomials over finite fields $\mathbb{F}_{p^k}$ for all $k \geq 1$, while the other seeks to characterize the MVSPs $F \in \mathbb{F}_q[x]$ such that $V_F=S$  for each nonempty set $S \subseteq \mathbb{F}_q$.  These approaches are interconnected; for instance, in the complete characterization of MVSPs over  $\mathbb{F}_{p^3}$, we  relied heavily on the characterization of MVSPs $F \in \mathbb{F}_{p^3}[x]$ with $V_F=\mathbb{F}_p$ as provided in \cite{Bor2}.

This work is founded on the following questions.

\begin{enumerate}[\rm(1)]
\item What is the complete list of nonempty sets $S \subseteq \mathbb{F}_q$ for which there exist  MVSPs  $F \in \mathbb{F}_q[x]$ with the value set $V_F=S$?
\item For each such $S$, characterize the set $\mathcal{P}(S, q)$ of all MVSPs $F \in \mathbb{F}_q[x]$ with $V_F=S$.
\end{enumerate}

As noted in \cite{Carlitz}, the cases where $\# S \leq 2$ do not adhere to the general pattern, but are relatively easy to characterize. Specifically, $\# V_F=1$ if and only if $F \in \mathbb{F}_q[x]$ is of the form
$$
F(x)=\alpha+\left(x^q-x\right) G(x),
$$
where $G \in \mathbb{F}_q[x]$ is non-zero. For the case $\# V_F=2$, it suffices to recall that for any two distinct elements $\alpha, \beta \in \mathbb{F}_q$, and any nonempty subset $U \subseteq \mathbb{F}_q$, Lagrange interpolation yields the unique polynomial
$$
F(x)=\alpha \sum_{\gamma \in U}\left(1-(x-\gamma)^{q-1}\right)+\beta \sum_{\gamma \notin U}\left(1-(x-\gamma)^{q-1}\right)
$$
satisfying $F(\gamma)=\alpha$ for $\gamma \in U$, and $F(\gamma)=\beta$ for $\gamma \notin U$. Consequently, our discussion will focus on MVSPs $F \in \mathbb{F}_q[x]$ for which $\# V_F>2$.

In this paper, we first answer question (1) by showing that all such sets $S$ arise from $\mathcal{U}^v = \{u^v : u \in \mathcal{U}\}$, where $\mathcal{U}$ is an $\mathbb{F}_{p^k}$-vector space with $\mathbb{F}_{p^k} \subseteq \mathcal{U}   \subseteq  \mathbb{F}_q$ and $v \mid (p^k - 1)$. We then characterize the sets $\mathcal{P}(S, q)$ in question (2) when $S$  is an affine space $a \cdot \mathcal{U} + b \subseteq  \mathbb{F}_q$.
This generalizes the main result of \cite{Bor2}, where the authors address the subfield  case  $\mathcal{U}=\mathbb{F}_{p^e} \subseteq  \mathbb{F}_q$.
%
%
%
%In addition, improving  results  in \cite{Mills}, we characterize $\mathcal{P}(S, q)$ when $\# S>p^{n / 2-1}$. Extending the techniques employed in \cite{BR}, we give a complete characterization of all MVSPs  in $\mathbb{F}_{p^4}[x]$. Based on this, we pose the following conjecture, which  asserts that all MVSPs are essentially obtained from suitable compositions of polynomials in three well characterized families of MVSPs. 
%

In addition, improving the results in \cite{Mills}, we characterize $\mathcal{P}(S, q)$ when $\# S>p^{n / 2-1}$. Furthermore, extending the techniques employed in \cite{BR}, we provide a complete characterization of all MVSPs in $\mathbb{F}_{p^4}[x]$. Based on this, we propose the following conjecture, which asserts that all MVSPs are essentially derived from suitable compositions of polynomials belonging to well-characterized families of MVSPs.

\begin{conjecture}\label{conj}
Let $q=p^n$, $k$ a divisor of $n$, and $v\geq 1$ a divisor of $p^k-1$.
Let $\mathcal{U} \subseteq \mathbb{F}_q$ be an $\mathbb{F}_{p^k}$-vector space of dimension $m$ such that $1 \in \mathcal{U}$ and
$\# \mathcal{U}^v>2$. If $\mathcal{U}$ is not a field, then $\mathcal{P}\left(\mathcal{U}^v, q\right)=\left\{f^v \mid f \in \mathcal{P}(\mathcal{U}, q)\right\}$. Otherwise,
$$
\mathcal{P}\left(\mathcal{U}^v, q\right)=\left\{\left.f^{\frac{\left(p^e-1\right) v}{p^{m k}-1}} \right\rvert\, f \in \mathcal{P}\left(\mathbb{F}_{p^e}, q\right)\right\},
$$
where $\mathbb{F}_{p^e}  \subseteq \F_q$ is the smallest field containing $\mathcal{U}^v$.
\end{conjecture}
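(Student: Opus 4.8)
The plan is to establish the conjectured equality by proving the two inclusions separately, after a preliminary normalization. Since replacing $F$ by $aF+b$ with $a\in\mathbb{F}_q^{*}$ and $b\in\mathbb{F}_q$ sends an MVSP with value set $S$ to one with value set $aS+b$ and induces a bijection on the relevant sets of polynomials, I would first reduce to the normalized situation in which $0,1\in\mathcal{U}$ and work with $\mathcal{U}^{v}$ directly. I would also unify the two cases: writing $w=v$ when $\mathcal{U}$ is not a field and $w=(p^{e}-1)v/(p^{mk}-1)$ when it is, and letting $\mathcal{W}$ denote $\mathcal{U}$ in the first case and $\mathbb{F}_{p^{e}}$ in the second, the claim becomes the single assertion $\mathcal{P}(\mathcal{W}^{w},q)=\{\,g^{w}\mid g\in\mathcal{P}(\mathcal{W},q)\,\}$, where in both cases $\mathcal{U}^{v}=\mathcal{W}^{w}$ is exactly the set of $w$-th powers of $\mathcal{W}$. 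The payoff of this reformulation is that $w\mid p^{k}-1$ (resp.\ $w\mid p^{e}-1$) forces $\mu_{w}\subseteq\mathbb{F}_{p^{k}}$ (resp.\ $\subseteq\mathbb{F}_{p^{e}}$), so the $w$-th power map is exactly $w$-to-$1$ from $\mathcal{W}\setminus\{0\}$ onto $\mathcal{W}^{w}\setminus\{0\}$, each fiber $\{\zeta u:\zeta\in\mu_{w}\}$ lying inside $\mathcal{W}$.

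For the inclusion $\{g^{w}\}\subseteq\mathcal{P}(\mathcal{W}^{w},q)$ I would argue by a direct degree-and-fiber count. If $g\in\mathcal{P}(\mathcal{W},q)$ then $\deg g=(q-1)/(\#\mathcal{W}-1)$ (the division being exact for value sets of this type) and $V_{g^{w}}=\mathcal{W}^{w}$; using the $w$-to-$1$ fiber structure above one gets $\#\mathcal{W}^{w}=(\#\mathcal{W}-1)/w+1$, while $(q-1)/(w\deg g)=(\#\mathcal{W}-1)/w$, so $\#V_{g^{w}}=\lfloor(q-1)/\deg(g^{w})\rfloor+1$ and $g^{w}$ is indeed an MVSP with value set $\mathcal{W}^{w}$. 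Once the non-field base is settled, the explicit classification of MVSPs with affine value set proved earlier in the paper pins down $\mathcal{P}(\mathcal{W},q)$ completely, while the field base is reduced to $\mathcal{P}(\mathbb{F}_{p^{e}},q)$, which is governed by the subfield classification of \cite{Bor2}.

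The reverse inclusion $\mathcal{P}(\mathcal{W}^{w},q)\subseteq\{g^{w}\}$ is the crux, and the reason the statement is posed as a conjecture rather than a theorem. Given an arbitrary $F\in\mathcal{P}(\mathcal{W}^{w},q)$, I would exploit the standard MVSP relation $x^{q}-x\mid\prod_{\beta\in\mathcal{W}^{w}}(F(x)-\beta)$ together with the minimality of $\deg F$ to control the ramification of $F$ over each value $\beta$. The goal is twofold: first, that $F$ is a perfect $w$-th power $g^{w}$ in $\mathbb{F}_q[x]$; and second, that the resulting $g$ is itself an MVSP with value set $\mathcal{W}$. The delicate point is the first: the fact that $V_{F}$ consists of $w$-th powers does not by itself force $F$ to be a $w$-th power, and establishing this means transferring the multiplicative ($\mu_{w}$-invariance) structure of the value set into a factorization of $F$ itself. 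I expect this to be the main obstacle, and it is exactly where the known proofs need extra leverage: for $q\le p^{3}$, $q=p^{4}$, or $\#S$ large (say $\#S>p^{\,n/2-1}$), one can bound $\deg F$ and the fiber data sharply and—through the dictionary with $\mathbb{F}_q$-Frobenius nonclassical curves from \cite{Bor0} and the St\"ohr--Voloch estimates, in the spirit of \cite{BR}—rule out every factorization pattern other than $F=g^{w}$, after which the affine and subfield classifications conclude. In the general regime these bounds are unavailable, so the reduction of an arbitrary MVSP to a $w$-th power of a base MVSP is the key gap that keeps the full statement conjectural.
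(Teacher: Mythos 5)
The statement you were given is posed by the paper as a conjecture, not a theorem, and your proposal correctly treats it as such: you prove the containment $\left\{g^{w} \mid g \in \mathcal{P}(\mathcal{W},q)\right\} \subseteq \mathcal{P}(\mathcal{W}^{w},q)$ and flag the reverse containment as genuinely open. This matches the paper's actual position. The paper establishes the same easy containment as Lemma~\ref{lem:aux}(i), and proves the hard containment only in special regimes: Theorem~\ref{pre-conj} (covering $\#\mathcal{U}^{v}>p^{n/2-1}$, and $m>1$ with $(2m-1)k\geq n/2$), Theorem~\ref{thm:p4-a} for $q=p^{4}$, and Remark~\ref{rem} when $\mathcal{U}^{v}$ is a subfield. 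Your unification of the two cases of the conjecture into a single statement about $\mathcal{W}\in\{\mathcal{U},\mathbb{F}_{p^{e}}\}$ is sound (the needed divisibility $w \mid p^{e}-1$ does hold) and is consistent with the paper's discussion of why the two cases are mutually exclusive.

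Two differences are worth recording. First, the paper proves the easy inclusion not by a fiber count but from the functional-equation characterization of MVSPs (Theorem~\ref{MillsBor}) via the identity $T^{*}(F^{v})=F^{v-1}\,T(F)$; this yields the stronger two-way equivalence $g\in\mathcal{P}(\mathcal{U},q)\Leftrightarrow g^{v}\in\mathcal{P}(\mathcal{U}^{v},q)$, which the partial results then rely on. Your count also contains a small slip: an MVSP $g$ with $V_{g}=\mathcal{W}$ need not have degree exactly $(q-1)/(\#\mathcal{W}-1)$; for instance, over $\mathbb{F}_{p^{4}}$ there are MVSPs with value set $\mathbb{F}_{p}$ of degree $p^{3}+p^{2}+p<(p^{4}-1)/(p-1)$ (Theorem~\ref{thm:p4}(ii) with $a=0$). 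The argument survives, though, since from $\lfloor (q-1)/\deg g\rfloor=\#\mathcal{W}-1$ and the integrality of $(\#\mathcal{W}-1)/w$ one still gets $\lfloor (q-1)/(w\deg g)\rfloor=(\#\mathcal{W}-1)/w$. Second, your account of how the known cases are settled does not reflect the paper's method: the evidence in Theorem~\ref{pre-conj} and Theorem~\ref{thm:p4-a} does not go through St\"ohr--Voloch estimates, but through Mills' structure theorem (writing $F=L^{v_{0}}N^{p^{m_{0}k_{0}}}$, Theorem~\ref{thm:mills}), the normalization $\gamma_{0}=0$ (Lemma~\ref{lem:gamma}), and Weil's bound on the character sum $\sum_{y\in\mathbb{F}_q}\chi(N(y))$: since $F$ takes only $w$-th-power values, this sum is abnormally large unless $N$ is essentially a $w$-th power, which forces $\deg(F)\geq(p^{n/2}+1)p^{mk}$ for any putative counterexample; the case $q=p^{4}$ additionally needs the Mason--Stothers theorem (Theorem~\ref{abc}) and a Wronskian manipulation. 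Your identification of the core obstruction --- that a value set consisting of $w$-th powers does not by itself force $F$ to be a $w$-th power --- is precisely the obstruction these tools are deployed against, so your diagnosis of why the statement remains conjectural is accurate.
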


This conjecture is confirmed by all previous results for $q \in\left\{p, p^2, p^3\right\}$, and further evidence is given by the new results established in this paper.

%The remainder of this paper is organized as follows. %Section 2 provides the general background used to prove Theorems 1.1, 1.2, and 1.3, which is done in Sections 3, 4, and 5, respectively. Finally, in Section 6, some examples and applications of Theorems 1.1 and 1.2 are considered.

\section{The results}

\begin{definition}
Let $S\subseteq \F_{q}$ be a nonempty subset.
\begin{enumerate}[\rm(i)]
\item  The set of all MVSPs $F\in \F_q[x]$ with $V_F=S$ is denoted by $\pp(S, q)$.
\item For any integer $v\geq 1$, we define $ S^v=\{s^v\, : s \in  S \}$.
\end{enumerate}
\end{definition}

The following result answers question (1) presented in the Introduction.

\begin{theorem}\label{thm:main1}
Let $q = p^n$ and $S \subseteq \mathbb{F}_q$, where $\#S > 2$. Then $\pp(S, q)\ne \emptyset$ if and only if 
$$S = a \cdot \mathcal{U}^v + b,$$
where
\begin{enumerate}[\rm(i)]
\item  $a, b \in \mathbb{F}_q$, with $a \neq 0$;
\item $\mathcal{U}$ is an $\mathbb{F}_{p^k}$-subspace of $\mathbb{F}_q$, with $1 \in \mathcal{U}$ and $k\geq1$;
\item  $v$ divides $p^k-1. $
\end{enumerate}
\end{theorem}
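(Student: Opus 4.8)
The plan is to prove the two implications separately: the forward direction (sufficiency of the stated form) is a short construction, while the converse is the substantive part. For sufficiency, suppose $S=a\cdot\mathcal{U}^v+b$ as in (i)--(iii), and write $\dim_{\mathbb{F}_{p^k}}\mathcal{U}=m$, so $\#\mathcal{U}=p^{km}$. Standard linearized-polynomial theory provides a \emph{separable} $\mathbb{F}_{p^k}$-linearized polynomial $L(x)=\sum_i c_i x^{p^{ki}}$ whose image is exactly $\mathcal{U}$; being separable, its number of distinct roots equals its degree, and since that root set is the kernel of the associated $\mathbb{F}_{p^k}$-linear map, $\deg L=\#\ker L=q/\#\mathcal{U}$. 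As $q/\#\mathcal{U}$ divides $q$, this already exhibits $L$ as an MVSP with $V_L=\mathcal{U}$. I then claim $F:=aL^v+b$ is an MVSP with $V_F=S$. Clearly $V_F=a\mathcal{U}^v+b=S$ and $\deg F=v\cdot q/\#\mathcal{U}$, so MVSP-ness reduces to $\#S=\lceil q/\deg F\rceil=\lceil\#\mathcal{U}/v\rceil$. Here the hypotheses interlock: $1\in\mathcal{U}$ forces $\mathbb{F}_{p^k}\subseteq\mathcal{U}$, and $v\mid p^k-1$ places all $v$ of the $v$-th roots of unity inside $\mathbb{F}_{p^k}\subseteq\mathcal{U}$, so $u\mapsto u^v$ is exactly $v$-to-$1$ on $\mathcal{U}\setminus\{0\}$ and fixes $0$; hence $\#\mathcal{U}^v=1+(\#\mathcal{U}-1)/v$. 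Since $p^{km}\equiv 1\pmod v$, this equals $\lceil p^{km}/v\rceil$, finishing sufficiency.

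For necessity, suppose $F$ is an MVSP with $V_F=S$, $\#S=s>2$, and $d=\deg F$. Affine maps preserve MVSP-ness: $F(\gamma x+\delta)$ with $\gamma\neq0$ has the same value set, while $(F-b)/a$ has value set $(S-b)/a$. It therefore suffices to produce one pair $(a,b)$, $a\neq0$, for which $(S-b)/a=\mathcal{U}^v$ with $\mathcal{U}$ an $\mathbb{F}_{p^k}$-subspace and $v\mid p^k-1$; the normalization $1\in\mathcal{U}$ is then free, since rescaling $\mathcal{U}$ by $\mu\in\mathbb{F}_q^{*}$ keeps it an $\mathbb{F}_{p^k}$-subspace and only alters $a$ by $\mu^v$. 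The engine is the value polynomial $h(T)=\prod_{c\in S}(T-c)$: since $F$ takes values only in $S$, every element of $\mathbb{F}_q$ is a root of $h(F(x))$, so $h(F(x))=(x^q-x)R(x)$ with $0\le\deg R=sd-q<d$. Differentiating yields the exact identity $h'(F(x))F'(x)=(x^q-x)R'(x)-R(x)$, and comparing multiplicities of any $\alpha\in\mathbb{F}_q$ in $h(F(x))$ shows that $\alpha$ is a ramification point of $F$ (a repeated root of $F(x)-F(\alpha)$) exactly when $R(\alpha)=0$; thus the low-degree polynomial $R$ controls all ramification.

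Guided by the model $\mathcal{U}^v$, in which $0$ is the unique totally ramified value and the power map recovers $\mathcal{U}$ as $\{x:x^v\in\mathcal{U}^v\}$, I would next (i) read the distinguished branch value $b$ off the ramification data carried by $R$ and set $S_0=S-b\ni 0$; (ii) extract the exponent $v$ from the common ramification index over $b$, fix a scale $a$, and form the candidate $\mathcal{U}:=\{x\in\mathbb{F}_q:x^v\in a^{-1}S_0\}$, whose collapse onto $a^{-1}S_0$ being exactly $v$-to-$1$ will encode $v\mid p^k-1$; and (iii) prove that $\mathcal{U}$ is an $\mathbb{F}_{p^k}$-subspace, i.e.\ that $\ell(X):=\prod_{x\in\mathcal{U}}(X-x)$ is a $p^k$-linearized (additive) polynomial. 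Step (iii) is the main obstacle and the structural heart of the theorem: one must convert the mere multiplicative and ramification regularity of $S$, together with the rigidity imposed by the Frobenius factor $x^q-x$ in $h(F(x))=(x^q-x)R(x)$, into genuine additive closure and $\mathbb{F}_{p^k}$-scalar invariance of the hidden set $\mathcal{U}$. I expect to accomplish this by feeding the differential identity back through the linearized-polynomial machinery of the preceding sections (and, where convenient, the established structure theory of MVSPs from \cite{Carlitz,Mills,Bor0}), first forcing additivity of $\ell$ and then identifying its ring of scalar symmetries with $\mathbb{F}_{p^k}$. Once $\mathcal{U}$ is known to be an $\mathbb{F}_{p^k}$-subspace with $a^{-1}S_0=\mathcal{U}^v$, undoing the affine normalization gives $S=a\cdot\mathcal{U}^v+b$, as required.
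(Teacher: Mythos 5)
Your sufficiency argument is sound and is essentially the paper's own construction in Lemma~\ref{lem:aux}: the polynomial $L$ with image $\mathcal{U}$ is the paper's $p^k$-linearized $M$ satisfying $T(M(x))=x^q-x$, where $T(x)=\prod_{u\in\mathcal{U}}(x-u)$, supplied by Lemma~\ref{lem:linear}(iii); your verification by direct counting (all $v$-th roots of unity lie in $\mathbb{F}_{p^k}\subseteq\mathcal{U}$, so $\#\mathcal{U}^v=1+(\#\mathcal{U}-1)/v=\lceil q/\deg(aL^v+b)\rceil$) is a legitimate alternative to the paper's appeal to the differential criterion of Theorem~\ref{MillsBor}. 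One point to tighten: to equate $\deg L$ with $\#\ker(L|_{\mathbb{F}_q})$ you must know the roots of $L$ lie in $\mathbb{F}_q$, i.e.\ that $L$ divides $x^q-x$; this follows from $T(L(x))=x^q-x$ and Lemma~\ref{lem:linear}(iii), but it is not automatic from separability alone.

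The necessity direction, however, has a genuine gap: it is a program, not a proof, and its decisive step is explicitly deferred. You correctly isolate step (iii) --- showing that $\mathcal{U}:=\{x\in\mathbb{F}_q : x^v\in a^{-1}(S-b)\}$ is an $\mathbb{F}_{p^k}$-subspace and that the power map collapses it exactly $v$-to-$1$ --- as the heart of the theorem, but the only argument offered is the stated intention to ``feed the differential identity back through the linearized-polynomial machinery.'' The identity $h'(F)F'=(x^q-x)R'-R$ does not by itself produce any additive structure; extracting a linearized polynomial from the MVSP condition is precisely the content of Mills' structure theorem (the paper's Theorem~\ref{thm:mills}), whose proof is the substantial part of \cite{Mills}. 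The paper's Lemma~\ref{cor:vec} does not rederive this: it cites Theorem~\ref{thm:mills}(iv), from which $T(x^v+\gamma_0)/x^{v-1}=\sum_i w_i x^{p^{ki}}$ is linearized, so its root set $\mathcal{U}_0$ is an $\mathbb{F}_{p^k}$-space with $V_F=\mathcal{U}_0^v+\gamma_0$. Moreover, even granting Mills' theorem (which you invoke only ``where convenient''), there remains work your outline does not anticipate: $\mathcal{U}_0$ lives a priori only in $\overline{\mathbb{F}}_q$, and your candidate $\mathcal{U}$, being defined inside $\mathbb{F}_q$, is just $\mathcal{U}_0\cap\mathbb{F}_q$; nothing in your plan rules out that some elements of $a^{-1}(S-b)$ have all their $v$-th roots outside $\mathbb{F}_q$, in which case $\mathcal{U}^v$ would be a proper subset of $a^{-1}(S-b)$. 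Closing exactly this loophole is the new content of Lemma~\ref{cor:vec}: after rescaling so that $1\in\mathcal{U}$, any $y\in\mathcal{U}\setminus\mathbb{F}_{p^k}$ satisfies $y^{p^k}=c_0y$ and $(y+1)^{p^k}=c_1(y+1)$ with $c_0,c_1\in\mathbb{F}_q$, whence $y=(c_1-1)/(c_0-c_1)\in\mathbb{F}_q$, so the whole space descends to $\mathbb{F}_q$. As written, the forward implication of Theorem~\ref{thm:main1} is unproven in your proposal.
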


\begin{remark}\label{rem2.3}
We observe that if $\mathcal{U}$ is an $\mathbb{F}_{p^k}$-subspace of $\mathbb{F}_q$ of  dimension $m$, and $v$ is a divisor of $p^k-1$, then the set $\mathcal U^v$ has cardinality $\frac{p^{mk}-1}{v}+1$. In particular, $\#\mathcal U^v>2$ if and only if $(m, v)\ne (1, p^k-1)$. 
\end{remark}

Note that for every $S\subseteq \F_q$ and  $a, b\in \F_q$, with $a\ne 0$, we have the equality 
$$\mathcal P(a\cdot S+b, q)=\{af(x)+b\,|\, f\in \mathcal P(S, q)\},$$
and after  Theorem~\ref{thm:main1}, the  study  of $\mathcal P(S, q)$ can be reduced to the case $S=\mathcal U^v$, with $\mathcal U$ and $v$ as in Theorem~\ref{thm:main1}.   In particular, a complete characterization of MVSPs over finite fields is obtained once we describe $\pp(\mathcal U^v, q)$ for every admissible pair $(\mathcal U, v)$. 

The case where $v=1$ and $\mathcal U$ is a subfield of $\F_q$ is  completed in~\cite[Theorem 4.7]{Bor2}. In the following theorem, we extend this classification  to the case where $\mathcal U$ is a generic vector subspace of $\F_q$. More precisely, we provide a complete characterization of  $\mathcal{P}(S, q)$ for $S = \mathcal{U}$, i.e., $v=1$, in terms of the sets $\mathcal{P}(\F_{p^e}, q)$,  with $e | n$, already
characterized in~\cite{Bor2}.  Before stating the theorem,   let us recall that, for a positive integer $k$, a polynomial is called  $p^k$-linearized if it is of the form $\sum_{i=0}^s a_i x^{p^{ki}} \in \overline{\mathbb{F}}_q[x]$. 

%\textcolor{blue}{Assim, promomos o seguinte:}
\begin{theorem}\label{thm:main2}
Let $q=p^n$ and $k$ be a divisor of $n$. Let  $\mathcal U\subseteq \F_q$ be an $\F_{p^k}$-vector space such that  $1 \in \mathcal U$ and $\# \mathcal U>2$, and  let $d\leq n/k$ be  the smallest positive  integer for which  $ \mathcal U\subseteq \F_{p^{dk}}$. If $A(x)=\prod_{u\in \mathcal U}(x- u)$, then there exists a monic $p^k$-linearized polynomial $M\in \F_q[x]$ such that $A(M(x))=M(A(x))=x^{p^{dk}}-x$. Moreover, $\F_{p^{dk}}\subseteq \F_q$ and we have
$$\mathcal P(\mathcal U, q)=\left\{ M(f(x))\,|\, f\in \pp(\F_{p^{dk}}, q)\right\}.$$
\end{theorem}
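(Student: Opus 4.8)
The plan is to run the whole argument through the (noncommutative) ring $\li_k$ of $p^k$-linearized polynomials, with addition and composition as operations. First I would observe that $A(x)=\prod_{u\in\mathcal U}(x-u)$ belongs to $\li_k$: since $\mathcal U$ is an $\F_{p^k}$-subspace and the roots are simple, a classical result of Ore shows $A$ is monic $p^k$-linearized of degree $p^{mk}$, with $m=\dim_{\F_{p^k}}\mathcal U$, and its coefficients lie in the field generated by $\mathcal U$. I would then identify that field: because $1\in\mathcal U$ forces $\F_{p^k}\subseteq\mathcal U$, the field generated by $\mathcal U$ is of the form $\F_{p^{dk}}$ with $d$ exactly as in the statement, and since $\mathcal U\subseteq\F_q$ we get $\F_{p^{dk}}\subseteq\F_q$ together with $A\in\F_{p^{dk}}[x]$.

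Next I would construct $M$ inside the twisted polynomial ring $R=\F_{p^{dk}}[\tau;\sigma]$, where $\tau\colon y\mapsto y^{p^k}$ and $\sigma\colon a\mapsto a^{p^k}$; this $R$ is a left and right Euclidean domain realizing $\li_k$ over $\F_{p^{dk}}$. The element $z$ corresponding to $x^{p^{dk}}-x$ equals $\tau^{d}-1$, which is \emph{central} because $\sigma^{d}=\mathrm{id}$ on $\F_{p^{dk}}$. Since $\ker A=\mathcal U\subseteq\F_{p^{dk}}=\ker(x^{p^{dk}}-x)$, right division gives $M\in R$ with $M\circ A=x^{p^{dk}}-x$, monic because $z$ and $A$ are. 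To upgrade this to a two-sided identity I would use centrality: from $M\circ A=z$ and $z$ central one gets $(A\circ M)\circ A=A\circ z=z\circ A=(M\circ A)\circ A$, and right-cancelling $A$ in the domain $R$ yields $A\circ M=M\circ A=x^{p^{dk}}-x$. This proves the first assertion, with $M$ monic $p^k$-linearized and $M\in\F_{p^{dk}}[x]\subseteq\F_q[x]$; moreover $\deg M=p^{(d-m)k}$, $\ker M\subseteq\F_{p^{dk}}$, and $A\circ M=x^{p^{dk}}-x$ forces $M(\F_{p^{dk}})=\mathcal U$.

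For the inclusion $\{M\circ f\mid f\in\pp(\F_{p^{dk}},q)\}\subseteq\pp(\mathcal U,q)$ I would argue by a value-set/degree count. If $f\in\pp(\F_{p^{dk}},q)$, then $V_{M\circ f}=M(V_f)=M(\F_{p^{dk}})=\mathcal U$, so $\#V_{M\circ f}=p^{mk}$; combining $\deg(M\circ f)=p^{(d-m)k}\deg f$ with the MVSP relation $(q-1)/\deg f\in[p^{dk}-1,p^{dk})$ for $f$, one checks that $(q-1)/\deg(M\circ f)$ lands in $[p^{mk}-1,p^{mk})$, so $M\circ f$ is itself an MVSP with value set $\mathcal U$.

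The reverse inclusion is the heart of the matter and the step I expect to be the main obstacle. Given $F\in\pp(\mathcal U,q)$, I would produce its lift by solving the additive equation $f^{p^{dk}}-f=A(F(x))$ in $\F_q[x]$. The equation is forced: if $F=M\circ f$ then applying $A$ gives $A\circ F=(A\circ M)\circ f=f^{p^{dk}}-f$. Conversely, any solution satisfies $f(\alpha)\in\F_{p^{dk}}$ for all $\alpha$ (since $A(F(\alpha))=0$), and applying $M$ together with the identity $M(y^{p^{dk}})=M(y)^{p^{dk}}$ (valid because $M\in\F_{p^{dk}}[x]$) yields $(M\circ f-F)^{p^{dk}}=M\circ f-F$, so $M\circ f-F$ is a constant lying in $\mathcal U$, which I can absorb by translating $f$ by a suitable element of $\F_{p^{dk}}$; this delivers $M\circ f=F$. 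Thus everything reduces to the single key lemma that $A(F(x))$ lies in the image of the operator $g\mapsto g^{p^{dk}}-g$, i.e.\ that its exponents obey the required $p^{dk}$-divisibility pattern. I expect this to be exactly where the fine structure of the MVSP $F$ must enter — through the explicit description of $\pp(\F_{p^{dk}},q)$ in \cite{Bor2} together with the classical near-equidistribution of the fibers of a minimal value set polynomial — and it is the portion I anticipate requiring the most work. Once solvability is secured, the solution has $\deg f=\deg F/p^{(d-m)k}$ and $V_f\subseteq\F_{p^{dk}}$ with $M(V_f)=\mathcal U$; a final fiber computation, using that the full fibers of $F$ split into full fibers of $f$ over each point of $M^{-1}(u)\cap\F_{p^{dk}}$, shows $V_f=\F_{p^{dk}}$ and hence $f\in\pp(\F_{p^{dk}},q)$, completing the proof.
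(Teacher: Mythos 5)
Your first two steps are sound: the twisted-ring construction of $M$ (using that $\tau^{d}-1$ is central in $\F_{p^{dk}}[\tau;\sigma]$, plus right-cancellation to get $A\circ M=M\circ A$) is a correct, self-contained alternative to the paper's appeal to Lemma~\ref{lem:linear}, and your degree/value-set count does establish the inclusion $\{M\circ f\mid f\in\pp(\F_{p^{dk}},q)\}\subseteq\pp(\mathcal U,q)$. The genuine gap is the reverse inclusion, and you flag it yourself: you reduce everything to the ``key lemma'' that for every $F\in\pp(\mathcal U,q)$ the polynomial $A(F(x))$ lies in the image of $g\mapsto g^{p^{dk}}-g$ on $\F_q[x]$, and you do not prove it. That lemma is not a technical loose end --- it \emph{is} the theorem. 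The paper's entire effort is spent exactly there: by Theorem~\ref{MillsBor}, any $F\in\pp(\mathcal U,q)$ satisfies $A(F)=-a_0(x^q-x)F'$, and Lemma~\ref{lem:main} carries out a delicate exponent analysis of this identity (the exponents of $F$ prime to $p$ have the form $d_iq^k+1$, all other exponents are $q$-power multiples of these, and $A(F)$ has only the monomials $x^{e_i}$ and $x^{e_iq^{D_i}}$); Proposition~\ref{prop:affine} then writes $F=\sum_i M_i(x^{e_i})$ and uses Lemma~\ref{novo} to force each $M_i$ to factor through $M_A$. Your proposed source for the missing lemma --- the description of $\pp(\F_{p^{dk}},q)$ from \cite{Bor2} plus fiber equidistribution --- is not obviously usable: that description concerns polynomials whose value set is $\F_{p^{dk}}$, whereas what is needed is fine monomial information about $F$ itself, whose value set is $\mathcal U$; this is precisely what Theorem~\ref{thm:mills}/Lemma~\ref{lem:main} supply and what your sketch omits.

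There is also a secondary gap in your endgame, even granting solvability. You assert that a ``final fiber computation'' gives $V_f=\F_{p^{dk}}$, but the naive counts do not close: from $\deg f=\deg F/p^{(d-m)k}$ and the MVSP bound on $\deg F$ one only gets $(q-1)/\deg f\geq p^{dk}-p^{(d-m)k}$, strictly weaker than the needed $(q-1)/\deg f\geq p^{dk}-1$ when $d>m$; likewise a missing value $y_0\in\F_{p^{dk}}\setminus V_f$ forces a fiber deficiency of only $\deg f$, which fits comfortably inside the total deficiency budget $\deg F'\leq\deg F-1$, so no contradiction results. The efficient repair is to bypass fibers altogether: once $M\circ f=F$ is in hand, applying $A$ gives $f^{p^{dk}}-f=A(F)=-a_0(x^q-x)\mu_0 f'=(x^q-x)f'$ (where $\mu_0=M'$ and $a_0\mu_0=-1$), and then the uniqueness clause of Theorem~\ref{MillsBor} forces $V_f$ to be exactly the root set of $x^{p^{dk}}-x$, i.e.\ $f\in\pp(\F_{p^{dk}},q)$. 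In short: your packaging of the easy half is clean, but the theorem's actual content --- the reverse inclusion --- remains unproven in your write-up.
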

There are three main families of MVSPs over $\F_q$: 
\begin{enumerate}[\rm(i)]
\item the monomials $x^v$, where $v$ divides  $q-1$
\item  the  $p$-linearized polynomials $L\in \F_q[x]$ that divide $x^q-x$
\item the MVSPs $F\in \F_q[x]$ whose value set is a subfield of $\F_q$.
\end{enumerate}
Some compositions of such MVSPs also generate new MVSPs, e.g., the family of MVSPs in Theorem~\ref{thm:main2}. Bearing this in mind,  Conjecture 1.1 states  that all MVSPs are essentially obtained from suitable compositions of polynomials coming from these three families. Its  proof, combined with Theorem~\ref{thm:main2}, will complete the characterization of all minimal value set polynomials.

\begin{remark}\label{rem} Conjecture \ref{conj} can be easily verified whenever  $\mathcal U^v=\F_{p^d}$ is a subfield of $\mathbb{F}_q$. Indeed, from Remark \ref{rem2.3},  $\mathcal U^v$ has cardinality $\frac{p^{mk}-1}{v}+1=p^d$, and then  $v=\frac{p^{m k}-1}{p^d-1}$. If $v=1$,  then $\mathcal{U}^v=\mathcal{U}=\mathbb{F}_{p^{m k}}$, and then we are in the second case of Conjecture 1.1 with $e=m k$, which gives $\frac{\left(p^e-1\right) v}{p^{m k}-1}=1$, and thus the statement holds. For $v>1$, since $v$ divides $p^k-1$, we have that $d \leq m k / 2$ and $\frac{p^{m k}-1}{p^d-1} \leq p^k-1$. Thus $(m-1) k<d \leq m k / 2$, which implies that $m=1$, and then $\mathcal{U}=\mathbb{F}_{p^k}$. We are in the second case of Conjecture \ref{conj} with $e=d$ and $\frac{\left(p^e-1\right) v}{p^{m k}-1}=1$, and then the statement holds trivially.
\end{remark}

The following theorem confirms additional  cases of Conjecture \ref{conj}.

\begin{theorem}\label{pre-conj}  Let $q=p^n$ be a prime power, $k$ a divisor of $n, v>1$  a divisor of $p^k-1$, and let $\mathcal{U} \subseteq \mathbb{F}_q$ be an $\mathbb{F}_{p^k}$-vector space of dimension $m$ such that $\# \mathcal{U}^v>2$ and $1 \in \mathcal{U}$. Then the following hold.
\begin{enumerate}[\rm(i)]
\item If $\mathcal{U} \neq \mathbb{F}_{p^{m k}}$, then $m>1$ and $\operatorname{deg}(F)>\left(p^{n / 2}+1\right) p^{m k}$ for every
$$
F \in \mathcal{P}\left(\mathcal{U}^v, q\right) \backslash\left\{f^v \mid f \in \mathcal{P}(\mathcal{U}, q)\right\} .
$$
\item  If $\mathcal{U}=\mathbb{F}_{p^{m k}}$, let e be the smallest positive integer such that $\mathcal{U}^v \subseteq \mathbb{F}_{p^e} \subseteq$ $\mathbb{F}_{p^{m k}}$, and set $t=\frac{v\left(p^e-1\right)}{p^{m k}-1} $. In this case, if
$$
F \in \mathcal{P}\left(\mathcal{U}^v, q\right) \backslash\left\{f^t \mid f \in \mathcal{P}\left(\mathbb{F}_{p^e}, q\right)\right\},
$$
then $t>1$, $\mathcal{U}^v$ is not a subfield of $\mathbb{F}_q$, and $\operatorname{deg}(F) \geq\left(p^{n / 2}+1\right) p^e$.
 \end{enumerate}
In particular, Conjecture  \ref{conj} holds in each of the following cases:
\begin{enumerate}[\rm(a)]
\item $\# \mathcal{U}^v>p^{n / 2-1}$;
\item  $m>1$ and $(2 m-1) k \geq n / 2$.

 \end{enumerate}

\end{theorem}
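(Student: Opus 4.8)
The proof splits into two layers: the degree lower bounds of parts~(i) and~(ii), which carry the real content, and the concluding cases~(a) and~(b), which are arithmetic consequences of those bounds together with the universal ceiling on the degree of an MVSP. I would first dispose of the elementary structural assertions. If $\mathcal{U}$ is not a field then $m>1$, since an $\mathbb{F}_{p^k}$-space of dimension $1$ containing $1$ is exactly $\mathbb{F}_{p^k}$; this is the first claim of~(i). In~(ii) one notes that $t$ is a genuine integer because $(\mathcal{U}^v)^{*}=(\mathbb{F}_{p^{mk}}^{*})^{v}$ is a subgroup of $\mathbb{F}_{p^e}^{*}$ of order $\frac{p^{mk}-1}{v}=\frac{p^e-1}{t}$, so $\frac{p^{mk}-1}{v}$ divides $p^e-1$. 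If $t=1$ then $\#\mathcal{U}^v=p^e$, forcing $\mathcal{U}^v=\mathbb{F}_{p^e}$ to be a field, a situation already settled in Remark~\ref{rem} with no exceptions; hence an exceptional $F$ forces $t>1$, whence $\#\mathcal{U}^v=\frac{p^e-1}{t}+1<p^e$ and $\mathcal{U}^v$ is a proper, thus non-field, subset of $\mathbb{F}_{p^e}$.

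For the degree estimates I would invoke the equivalence recalled in the Introduction: if $F,G\in\mathcal{P}(\mathcal{U}^v,q)$, then the irreducible components of $F(x)-G(y)=0$ are $\mathbb{F}_q$-Frobenius nonclassical. Fixing a reference $G$ of the conjectured shape and assuming $F$ exceptional, the plan is to show that $F(x)-G(y)$ cannot split into the low-degree pieces produced in the conjectured case; instead its factorization is governed by the $\mathbb{F}_{p^k}$-linear (resp. subfield) structure of $\mathcal{U}$, whose size is $p^{mk}$ (resp. $p^e$). Since every irreducible $\mathbb{F}_q$-Frobenius nonclassical plane curve has degree at least $p^{n/2}+1$ (the origin of the $\sqrt{q}$ term, via the St\"ohr--Voloch point-counting estimates), combining this lower bound for the residual component with the forced $p^{mk}$-fold (resp. $p^e$-fold) structure and additivity of degree on $F(x)-G(y)$ yields $\deg F>(p^{n/2}+1)p^{mk}$ in~(i) and $\deg F\ge(p^{n/2}+1)p^e$ in~(ii). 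I expect this to be the main obstacle: isolating the residual component, certifying that it is genuinely Frobenius nonclassical \emph{exactly} when $F$ is not of the conjectured form, and correctly accounting for the $p^{mk}$ (resp. $p^e$) factor, while controlling the multiplicities and the possible reducibility of $F(x)-G(y)$. It is here that the extension of the $p^3$ methods is needed.

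Granting~(i) and~(ii), cases~(a) and~(b) follow by comparison with the MVSP ceiling: since $\#\mathcal{U}^v=\frac{p^{mk}-1}{v}+1$ by Remark~\ref{rem2.3}, every $F\in\mathcal{P}(\mathcal{U}^v,q)$ satisfies $\deg F\le\frac{q-1}{\#\mathcal{U}^v-1}=\frac{(q-1)v}{p^{mk}-1}$. For~(b), using $v\le p^k-1$ gives
$$\deg F\le\frac{(q-1)(p^k-1)}{p^{mk}-1}<p^{\,n-(m-1)k},$$
and the hypothesis $(2m-1)k\ge n/2$ is exactly $n-(m-1)k\le n/2+mk$, so $\deg F<p^{\,n/2+mk}<(p^{n/2}+1)p^{mk}$, contradicting~(i). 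When $\mathcal{U}=\mathbb{F}_{p^{mk}}$ is a field and $m>1$, the same $v\le p^k-1$ forces $\frac{p^{mk}-1}{v}\ge\frac{p^{mk}-1}{p^k-1}>p^{(m-1)k}$; as this number divides $p^e-1$ we get $e>(m-1)k$, and since the only divisor of $mk$ exceeding $(m-1)k$ is $mk$ itself, $e=mk$, so~(ii) reduces to the identical estimate. Thus no exceptional $F$ exists, proving~(b). For~(a), the ceiling gives $\deg F<\frac{q-1}{p^{n/2-1}-1}\approx p^{n/2+1}$, which is below $(p^{n/2}+1)p^{mk}$ and $(p^{n/2}+1)p^e$ whenever the separated factor has size at least $p^2$; the sole residual possibility $e=1$ forces $\mathcal{U}^v\subseteq\mathbb{F}_p$, hence $p>p^{n/2-1}$ and $n\le 3$, a range in which Conjecture~\ref{conj} is already known. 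In every case the exceptional family is empty, confirming Conjecture~\ref{conj}.
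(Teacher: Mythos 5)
Your handling of the peripheral assertions is essentially fine: the claim $m>1$ in (i), the integrality of $t$ and the deduction of $t>1$ and ``$\mathcal{U}^v$ not a subfield'' in (ii) (though for the latter you should invoke the minimality of $e$ — a proper subset of $\mathbb{F}_{p^e}$ could a priori still be a smaller subfield — rather than ``proper, thus non-field''), and your derivations of (a) and (b) from the degree bounds agree with the paper's own hint that $\deg F<q/r$ and $r>p^{(m-1)k}$ when $m>1$; your reduction of the field case of (b) to $e=mk$ is correct. The genuine gap is that the entire content of the theorem lies in the degree bounds of (i) and (ii), and for these you offer a plan rather than a proof — and a plan whose central mechanism is absent. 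Via \cite{Bor0}, every irreducible component of $F(x)-G(y)$ is indeed $\mathbb{F}_q$-Frobenius nonclassical, hence of degree at least $p^{n/2}+1$ by St\"ohr--Voloch theory; but each component has degree at most $\max\{\deg F,\deg G\}$, so this alone yields only $\deg F\gtrsim p^{n/2}$, not $\deg F\gtrsim\left(p^{n/2}+1\right)p^{mk}$. The factor $p^{mk}$ (resp.\ $p^e$) must have a source — you call it ``the forced $p^{mk}$-fold structure'' — but nothing in your sketch produces it: you would need, say, that $F(x)-G(y)$ has on the order of $p^{mk}$ distinct components, and you neither prove nor indicate how to prove anything of the kind. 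You acknowledge this yourself (``I expect this to be the main obstacle''), which means the theorem is not established.

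For contrast, the paper's mechanism is entirely different. It applies Mills' structure theorem (Theorem \ref{thm:mills}) to an exceptional $F$: Lemma \ref{lem:gamma} forces $\gamma_0=0$ (this is precisely where $v>1$ and ``$\mathcal{U}^v$ not a subfield'' are used), giving $F=L^{v_0}N^{p^{m_0k_0}}$ with $L=\gcd(F,x^q-x)$, and a parameter-matching argument (comparing $\frac{p^{mk}-1}{v}=\frac{p^{m_0k_0}-1}{v_0}$ and a gcd computation on exponents) shows $m_0k_0=mk$ and $v_0=v$ in case (i); the factor $p^{mk}$ is then simply the exponent of $N$ in this decomposition. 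The $p^{n/2}$ term comes not from curve theory but from Weil's bound on multiplicative character sums (Theorem \ref{thm:Weil}): since $V_F=\mathcal{U}^v$ consists of $v$-th powers and $\mathcal{U}\subseteq\mathbb{F}_q$, one has $\chi_v(N(y))=1$ for every $y\in\mathbb{F}_q$ outside the roots of $L$, so $\left|\sum_{y}\chi_v(N(y))\right|\geq q-2\deg L$, and Weil's bound forces $\deg N\geq p^{n/2}+1-2\deg(L)p^{-n/2}$ unless $N$ has the form $aH^{v}$, which is excluded for exceptional $F$ by Lemma \ref{lem:aux}. This gives $\deg F\geq\left(p^{n/2}+1\right)p^{mk}+\left(v-2p^{mk-n/2}\right)\deg L$, and a final step showing $mk<n/2$ (else $\deg F\geq q/2$, contradicting $\#V_F\geq 3$) removes the error term; case (ii) is run analogously with $t$ in place of $v$. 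None of these steps — the vanishing of $\gamma_0$, the parameter matching, the character-sum estimate, the bound $mk<n/2$ — has a counterpart in your proposal, so as it stands it does not prove (i) or (ii), and hence not (a) or (b) either.
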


A consequence of our previous results is that  Conjecture~\ref{conj} holds for $q=p^4$, which will lead us to the complete classification of MVSPs in $\mathbb{F}_{p^4}[x]$, as given by the following theorem.

\begin{theorem}\label{thm:p4}
Up to pre- and post-compositions with affine maps $x \mapsto ax + b$, a polynomial $F \in \mathbb{F}_{p^4}[x]$ is an MVSP with value set $V_F$ satisfying $r := |V_F| - 1 \geq 2$ if and only if $F$ is of one of the following types:
\begin{enumerate}[\rm(i)]
\item $x^v$, where $v$ is a proper  divisor of $p^4 - 1$;
\item $g(x)^t$, where $t$ is a proper divisor of $p - 1$, and $g \in \mathbb{F}_{p^4}[x]$ is an MVSP with $V_g = \mathbb{F}_{p}$, i.e., $g$ is a nonconstant polynomial of the form
\begin{align*}
& ax^{p^3+p^2+p+1} + bx^{p^3+p^2+p} + b^p x^{p^3+p^2+1} + b^{p^2} x^{p^3+p+1} + b^{p^3} x^{p^2+p+1} \\
& + cx^{p^3+p^2} + c^p x^{p^3+1} + c^{p^2} x^{p+1} + c^{p^3} x^{p^2+p} + dx^{p^3+p} + d^p x^{p^2+1} \\
& + ex^{p^3} + e^p x + e^{p^2} x^{p} + e^{p^3} x^{p^2} + f,
\end{align*}
where $a, f \in \mathbb{F}_p$, $d \in \mathbb{F}_{p^2}$, and $b, c, e \in \mathbb{F}_{p^4}$;
\item $g(x)^t$, where $t$ is a proper  divisor of $p^2 - 1$, and $g$ is an MVSP with $V_g = \mathbb{F}_{p^2}$, i.e., $g$ is a nonconstant polynomial of the form
$$ax^{p^2+1} + bx^{p^2} + b^{p^2} x + c,$$
 where $a, c \in \mathbb{F}_{p^2}$ and $b \in \mathbb{F}_{p^4}$;
\item $(x^{p^2} + (1 + (\beta^{p^3} - \beta^{p^2})^{p-1})x^p - (\beta^p - \beta)^{1-p}x)^v$, where $\beta \in \mathbb{F}_{p^4} \setminus \mathbb{F}_p$ and $v$ divides $p-1$;
\item $(x^p - x)^v$, where $v$ divides $p-1$.
\end{enumerate}
\end{theorem}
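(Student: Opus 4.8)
The plan is to obtain Theorem~\ref{thm:p4} as the $n=4$ specialization of the general theory, turning each surviving case into an explicit polynomial family. I would begin with Theorem~\ref{thm:main1}: because $r=\#V_F-1\ge 2$, every MVSP $F\in\F_{p^4}[x]$ has value set $V_F=a\cdot\mathcal U^v+b$ for an $\F_{p^k}$-subspace $\mathcal U\ni 1$ with $k\mid 4$ and a divisor $v\mid p^k-1$. Using $\pp(a\cdot S+b,q)=\{af+b\mid f\in\pp(S,q)\}$, post-composition with $x\mapsto ax+b$ normalizes the value set to $\mathcal U^v$, while pre-composition with affine maps fixes value sets and will be used to merge subspaces of equal dimension. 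The task then reduces to enumerating the admissible triples $(k,m,v)$ with $mk\le 4$ and describing $\pp(\mathcal U^v,p^4)$ in each.

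For $n=4$ the possibilities are: $\mathcal U=\F_{p^4}$ (so $\mathcal U^v$ is a set of $v$-th powers, giving the monomials of type~(i)); the subfields $\mathcal U=\F_{p^2}$ and $\mathcal U=\F_p$ (feeding types~(iii) and~(ii)); a $2$-dimensional $\F_p$-subspace that is not a field (type~(iv)); and a $3$-dimensional $\F_p$-subspace (type~(v)). The genuinely non-field cases are $(m,k)=(2,1)$ and $(3,1)$, and for these I would confirm Conjecture~\ref{conj} through Theorem~\ref{pre-conj}(b): with $n/2=2$ one has $(2m-1)k=3$ and $5$ respectively, both $\ge n/2$, so $\pp(\mathcal U^v,p^4)=\{f^v\mid f\in\pp(\mathcal U,p^4)\}$. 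Theorem~\ref{thm:main2} then rewrites $\pp(\mathcal U,p^4)$ through a monic $p$-linearized $M$ with $A(M(x))=M(A(x))=x^{p^{4}}-x$, where $A(x)=\prod_{u\in\mathcal U}(x-u)$; solving this linearized identity for $\mathcal U=\langle 1,\beta\rangle$ yields the degree-$p^2$ inner polynomial of type~(iv) with its $\beta$-dependent coefficients, and for a $3$-dimensional $\mathcal U$ it yields the degree-$p$ map $x^p-x$ of type~(v), the $\beta$-family in (iv) and the single representative in (v) arising from the extent to which pre-composition identifies distinct subspaces.

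The field cases $\mathcal U\in\{\F_p,\F_{p^2},\F_{p^4}\}$ belong to the second branch of Conjecture~\ref{conj} and split according to the size of $\mathcal U^v$. When $\#\mathcal U^v>p^{\,n/2-1}=p$, Theorem~\ref{pre-conj}(a) applies directly; when $\mathcal U^v$ is itself a subfield, Remark~\ref{rem} applies; and for $\mathcal U=\F_{p^2}$ with $\mathcal U^v\not\subseteq\F_p$ (so $e=2$ in Theorem~\ref{pre-conj}(ii)) the degree cap $(p^{2}+1)p^{2}$ exceeds the largest admissible MVSP degree $(p^2+1)v$ (as $v\mid p^2-1$ forces $v<p^2$), excluding exceptional polynomials. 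Substituting the explicit subfield MVSPs of~\cite{Bor2}---the degree-$(p^3+p^2+p+1)$ polynomial when $V_f=\F_p$ and $ax^{p^2+1}+bx^{p^2}+b^{p^2}x+c$ when $V_f=\F_{p^2}$---into $\{f^t\mid f\in\pp(\F_{p^e},p^4)\}$ then produces types~(ii) and~(iii), while the monomial branch gives type~(i).

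The remaining cases, which are the main obstacle, all reduce to value sets $\F_p^{v'}=\{w^{v'}:w\in\F_p\}$ with $1<v'<p-1$, reached either from $\mathcal U=\F_p$ or from $\mathcal U\in\{\F_{p^2},\F_{p^4}\}$ with $v$ a suitable multiple routing the value set into the prime field. Here Theorem~\ref{pre-conj}(ii) gives only the lower bound $(p^{2}+1)p=p^3+p$ on the degree of a putative exceptional MVSP, which lies strictly below the degree $v'(p^3+p^2+p+1)$ of the candidate $v'$-th powers; thus the degree argument is vacuous and the conjecture is not formally implied. Closing this gap---proving $\pp(\F_p^{v'},p^4)=\{f^{v'}\mid f\in\pp(\F_p,p^4)\}$ by a finer analysis in the spirit of the $\F_{p^3}$ treatment in~\cite{BR}---is the crux of the proof; the only other nontrivial computation is the explicit determination of the linearized polynomial $M$, i.e.\ the coefficients $1+(\beta^{p^3}-\beta^{p^2})^{p-1}$ and $-(\beta^p-\beta)^{1-p}$ appearing in type~(iv).
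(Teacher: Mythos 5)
Your reduction and normalization via Theorem~\ref{thm:main1}, the non-field cases $(m,k)=(2,1),(3,1)$ handled through Theorem~\ref{pre-conj} and Theorem~\ref{thm:main2} (yielding types (iv) and (v) after the explicit computation of $M$), and the field cases where $\#\mathcal U^v>p$, or $\mathcal U^v$ is a subfield, or $e\in\{2,4\}$, all match the paper's argument and are sound; in particular your observation that for $e=2$ the bound $(p^2+1)p^2$ exceeds every admissible MVSP degree is correct. However, the proposal has a genuine gap, which you yourself flag: the case where the value set is $(\F_p)^{t}$ with $1<t<p-1$ (equivalently $e=1$, $t>1$ in Theorem~\ref{pre-conj}(ii)). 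There the bound $\deg(F)\ge (p^2+1)p$ is indeed vacuous against the admissible degree $\le t\,(p^3+p^2+p+1)$, and nothing you invoke rules out exceptional MVSPs with such value sets. Deferring this to ``a finer analysis in the spirit of \cite{BR}'' is not a proof: this case is precisely the hard core of the paper's treatment, isolated there as Theorem~\ref{thm:p4-a} (Conjecture~\ref{conj} holds for $q=p^4$), on which the paper's proof of Theorem~\ref{thm:p4} rests.

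For comparison, here is what closing that gap actually requires in the paper. For such an $F$, Lemma~\ref{lem:gamma} gives $\gamma_0=0$, and Theorem~\ref{thm:mills} yields $F=L^tN^p$ with $L=\gcd(F,x^{p^4}-x)=A^px+B^p$ and $rt+1=p$ (whence $p\ge 5$). Since $V_F=\mathcal C_r\cup\{0\}$, one has $T(x)=x^{r+1}-x$, and the defining equation $T(F)=-r(x^{p^4}-x)F'$ collapses to the polynomial identity $(A^px+B^p)N^r=Ax^{p^3}+B$. The paper then proves that every root of $N$ has multiplicity $\equiv 0,t\pmod p$: a Wronskian manipulation converts the identity into $A^pN^{r+1}=Sx^{p^3}+T$ with $S=A'N-rAN'$ and $T=B'N-rBN'$; after removing common factors, a degree count permits an application of the Mason--Stothers theorem (Theorem~\ref{abc}), forcing the left-hand side to be a $p$-th power. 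Finally, writing $N=N_1^tN_2^p$ and applying the Weil bound (Theorem~\ref{thm:Weil}) to a character sum attached to $N_2$ produces degree contradictions, treated separately for $r>2$ and $r=2$. None of this machinery---in particular the use of Mason--Stothers, which nothing in your outline anticipates---is recoverable from the results you cite, so the proposal is incomplete at exactly the decisive step.
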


Another consequence of the previous results is the characterization of $\mathbb{F}_{p^4}$-Frobenius nonclassical curves of the form $y^d = f(x)$. This follows from the fact that Conjecture~\ref{conj} holds for $q = p^4$, along with the following general  result, which is conditional on Conjecture~\ref{conj}. Note that Theorem~\ref{FNC} below benefits from our complete understanding of the sets  $\mathcal{P}\left(\mathbb{F}_{p^e}, q\right)$.

\begin{theorem}\label{FNC}
Suppose Conjecture~\ref{conj} holds. Let $q = p^n$ be a prime power, and let $\mathcal{F}: y^d = f(x)$ be an irreducible plane curve defined over the finite field $\mathbb{F}_q$. If $d < q - 1$, then $\mathcal{F}$ is $\mathbb{F}_q$-Frobenius nonclassical if and only if  
$d = \frac{q - 1}{p^e - 1}$, where $e \mid n$, and $f(x)$ is an MVSP whose value set is $\mathbb{F}_{p^e}$.
\end{theorem}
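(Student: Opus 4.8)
The plan is to translate the Frobenius nonclassicality of $\mathcal{F}: y^d = f(x)$ into a statement about minimal value set polynomials, and then invoke the (assumed) Conjecture~\ref{conj} together with the known description of $\mathcal{P}(\mathbb{F}_{p^e}, q)$. First I would recall the connection established in \cite{Bor0}: for a curve with separated variables $F(x) - G(y) = 0$, its irreducible components are $\mathbb{F}_q$-Frobenius nonclassical exactly when $F$ and $G$ are MVSPs in $\mathbb{F}_q[x]$ sharing the same value set. To apply this, I would rewrite $y^d = f(x)$ in separated-variable form. Writing the defining relation as $f(x) - y^d = 0$, the two ``sides'' are $F = f$ and $G(y) = y^d$; the curve $\mathcal{F}$ being irreducible corresponds to an irreducible component of $F(x) - G(y) = 0$. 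Thus $\mathcal{F}$ is $\mathbb{F}_q$-Frobenius nonclassical precisely when both $f$ and $y^d$ are MVSPs in $\mathbb{F}_q[x]$ with $V_f = V_{y^d}$.

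The key computation is then to identify when $y^d$ is an MVSP and what its value set is. The monomial $y^d$ is an MVSP over $\mathbb{F}_q$ exactly when $d \mid q-1$, in which case its value set is $\{0\} \cup \{(d\text{-th powers})\} = \mu_{(q-1)/d} \cup \{0\}$, a set of size $\frac{q-1}{d} + 1$. For this value set to be a subfield $\mathbb{F}_{p^e}$ of $\mathbb{F}_q$, I would match cardinalities: $\frac{q-1}{d} + 1 = p^e$ forces $d = \frac{q-1}{p^e-1}$, and then I must verify that the image $\{0\} \cup \mu_{p^e-1}$ is genuinely the subfield $\mathbb{F}_{p^e}$, which follows since the nonzero $d$-th powers are exactly the elements of order dividing $p^e - 1$, i.e.\ $\mathbb{F}_{p^e}^{\times}$. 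The hypothesis $d < q-1$ rules out the degenerate case $e$ giving a value set of size $\leq 2$, keeping us within the regime $\#V_F > 2$ where Theorem~\ref{thm:main1} and the conjecture apply.

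It remains to force $V_f = \mathbb{F}_{p^e}$ and conclude $f \in \mathcal{P}(\mathbb{F}_{p^e}, q)$. For the forward direction, nonclassicality gives $V_f = V_{y^d} = \mathbb{F}_{p^e}$ with $d = \frac{q-1}{p^e-1}$ as above, so $f$ is an MVSP whose value set is the subfield $\mathbb{F}_{p^e}$, which is exactly the assertion. For the converse, if $d = \frac{q-1}{p^e-1}$ and $f$ is an MVSP with $V_f = \mathbb{F}_{p^e}$, then $y^d$ is also an MVSP with value set $\mathbb{F}_{p^e}$, so $f$ and $y^d$ are MVSPs with a common value set, and the criterion of \cite{Bor0} yields that $\mathcal{F}$ is $\mathbb{F}_q$-Frobenius nonclassical. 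I expect the main obstacle to be the careful handling of the irreducibility and separated-variable reduction: one must ensure that $\mathcal{F}: y^d = f(x)$ corresponds to an \emph{irreducible} component of $f(x) - y^d = 0$ (rather than the whole, possibly reducible, curve), and that the equivalence from \cite{Bor0} transfers componentwise under the hypothesis $d < q-1$. Here Conjecture~\ref{conj} enters indirectly: it guarantees that no \emph{other} MVSP $f$ with value set $\mathbb{F}_{p^e}$ escapes the description $\mathcal{P}(\mathbb{F}_{p^e}, q)$, so the characterization is both complete and exclusive. The subsidiary point to check is that no value of $d$ with $d \nmid q-1$ can yield an MVSP $y^d$, which is immediate from the monomial value-set count, thereby closing off spurious solutions.
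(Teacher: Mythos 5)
Your ``if'' direction is correct and matches the paper: for $d=\frac{q-1}{p^e-1}$ both $y^d$ and $f$ are MVSPs over $\mathbb{F}_q$ with value set $\mathbb{F}_{p^e}$, and \cite[Corollary 3.5]{Bor0} gives nonclassicality; this half needs no conjecture. The genuine gap is in your ``only if'' direction. From nonclassicality and \cite{Bor0} you may conclude only that $y^d$ and $f$ are MVSPs with the \emph{same} value set, hence $d\mid q-1$ and $V_f=\left(\mathbb{F}_q\right)^d$. You then assert ``nonclassicality gives $V_f=V_{y^d}=\mathbb{F}_{p^e}$ with $d=\frac{q-1}{p^e-1}$,'' but nothing in your argument forces $\left(\mathbb{F}_q\right)^d$ to be a subfield: for a general divisor $d$ of $q-1$ the set of $d$-th powers together with $0$ has cardinality $\frac{q-1}{d}+1$, which need not equal any $p^e$. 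Your cardinality computation shows only that \emph{if} $V_{y^d}$ is a subfield \emph{then} $d=\frac{q-1}{p^e-1}$; it does not show that nonclassicality forces $V_{y^d}$ to be a subfield. That implication is precisely the hard content of the theorem, and it is where both Conjecture~\ref{conj} and the irreducibility hypothesis must enter --- neither of which your forward argument actually uses.

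The paper closes this gap as follows. With $S=\left(\mathbb{F}_q\right)^d$ and $\#S>2$ (from $d<q-1$), apply Conjecture~\ref{conj} with $\mathcal{U}=\mathbb{F}_q$ (a field) and $v=d$: letting $\mathbb{F}_{p^e}$ be the smallest subfield containing $\left(\mathbb{F}_q\right)^d$, every $f\in\mathcal{P}\left(\left(\mathbb{F}_q\right)^d,q\right)$ has the form $f=g^{s}$ with $s=\frac{(p^e-1)d}{q-1}$ and $g\in\mathcal{P}\left(\mathbb{F}_{p^e},q\right)$. Since $s$ divides $d$ and $\gcd(s,p)=1$, the polynomial $y^d-g(x)^s$ factors over $\mathbb{F}_q$ as $\prod_{\zeta^s=1}\left(y^{d/s}-\zeta g(x)\right)$, so irreducibility of $\mathcal{F}$ forces $s=1$, i.e.\ $d=\frac{q-1}{p^e-1}$ and $V_f=\mathbb{F}_{p^e}$. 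Note also that your closing remark misidentifies the role of the conjecture: the description of $\mathcal{P}\left(\mathbb{F}_{p^e},q\right)$ is unconditional (it is given in \cite{Bor2}, as the paper emphasizes); the conjecture is needed precisely to describe $\mathcal{P}\left(\left(\mathbb{F}_q\right)^d,q\right)$ when $\left(\mathbb{F}_q\right)^d$ is \emph{not} a subfield, which is exactly the case your forward direction silently ignores.
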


%Another consequence of the previous results is the characterization of $\mathbb{F}_{p^4}$-Frobenius nonclassical curves of the form $y^n = f(x)$.
%This follows  from the fact that Conjecture~\ref{conj} holds for $q=p^4$ together with the following result conditional on Conjecture~\ref{conj}.
%
%
%
%
%\begin{theorem}\label{FNC}[Conditional on Conjecture~\ref{conj}] Suppose Conjecture~\ref{conj} holds. Let $q=p^n$ be a prime power, and  let   $\mathcal{F}: y^d=f(x)$ be an  irreducible plane curve  defined over the finite field
%$\F_{q}$.   If  $n< q-1$, then  $\mathcal{F}$ is  $\F_{q}$-Frobenius  nonclassical  if and only if  
%$d=\frac{q-1}{p^e-1}$, where $e \mid n$, and $f(x)$ is an MVSP whose value set is $\F_{p^e}$.

%\begin{enumerate}[\rm(i)]
%  \item $n=p^3+p^2+p+1$, and    $f$ is the polynomial $g$ is given by \text{ (ii) } of Theorem \ref{thm:p4}.  
%  re
%\item $n=p^2+1$, and   $f$ is the polynomial $g$ is given by \text{ (ii) } of Theorem \ref{thm:p4}.
% 
%
%\end{enumerate}
%\end{theorem}

\section{Preparation}
We begin by presenting a few  technical results that will be used later on.

\begin{theorem}\cite[Theorem 5.41]{Nie} \label{thm:Weil} Let $\chi$ be a multiplicative character of $\F_{q}$ of order $r>1$ and  $F\in \F_{q}[x]$ be a nonconstant polynomial such that $F$ is not of the form $ag(x)^r$ with $a\in \F_q$ and $g\in \F_{q}[x]$.  Suppose that $z$ is the number of distinct roots of $F$ in its splitting field over $\F_{q}$. Then
$$\left|\sum_{c\in \F_{q}}\chi(F(c))\right|\le (z-1)\sqrt{q}.$$
\end{theorem}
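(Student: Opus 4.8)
The plan is to interpret the sum as the leading coefficient data of an $L$-function attached to the composite character $\chi\circ F$ on the projective line, and to extract the bound from the Riemann Hypothesis for curves (Weil's theorem), realized concretely through the Kummer cover $y^{r}=F(x)$. The factor $\sqrt{q}$ will come from the purity of the reciprocal roots of this $L$-function, and the factor $z-1$ from bounding its degree by the number of ramified places on $\mathbb{P}^{1}$.

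First I would lift $\chi$ to all extensions: for each $s\ge 1$ let $\chi_{s}=\chi\circ N_{\F_{q^{s}}/\fq}$ be the lift of $\chi$ through the norm, again a multiplicative character of order $r$, and set
$$
S_{s}=\sum_{c\in \F_{q^{s}}}\chi_{s}(F(c)),\qquad
L(t)=\exp\!\left(\sum_{s\ge 1}S_{s}\,\frac{t^{s}}{s}\right),
$$
so that $S_{1}$ is exactly the sum to be bounded. The whole problem then reduces to showing that $L(t)$ is a polynomial whose reciprocal roots all have absolute value $\sqrt{q}$.

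Next I would establish polynomiality together with the degree bound. Writing $F=a\prod_{i=1}^{z}(x-\alpha_{i})^{e_{i}}$ over the splitting field, the object $\chi\circ F$ is a (tamely) ramified character of $\fq(x)$, ramified only at those places $x=\alpha_{i}$ with $\chi^{e_{i}}\neq 1$ and possibly at $x=\infty$. The hypothesis that $F$ is \emph{not} of the form $a\,g(x)^{r}$ is precisely what forces $\chi\circ F$ to be a \emph{nontrivial} character: if $F=a g^{r}$ then $\chi(F(c))=\chi(a)$ whenever $g(c)\neq 0$, so $S_{1}$ degenerates to size $\approx q$ and the claimed bound fails; this collapsed case is exactly what the hypothesis removes. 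In the nondegenerate case, the standard theory of $L$-functions of nontrivial characters over the genus-$0$ field $\fq(x)$ shows that $L(t)$ is a polynomial of degree $2\cdot 0-2+\deg(\mathfrak{f})$, where the conductor $\mathfrak{f}$ is supported on the distinct roots of $F$ together with $\infty$, hence on at most $z+1$ places with multiplicity one by tameness. Therefore
$$
L(t)=\prod_{j=1}^{D}(1-\omega_{j}t),\qquad D\le (z+1)-2=z-1.
$$

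Finally I would invoke Weil's theorem — the Riemann Hypothesis for the smooth projective model of the Kummer cover $y^{r}=F(x)$ — to conclude that every reciprocal root is pure of weight one, $|\omega_{j}|=\sqrt{q}$. Comparing logarithmic derivatives of the two expressions for $\log L(t)$ gives $S_{s}=-\sum_{j=1}^{D}\omega_{j}^{s}$, so in particular
$$
\left|S_{1}\right|=\left|\sum_{j=1}^{D}\omega_{j}\right|\le D\sqrt{q}\le (z-1)\sqrt{q},
$$
which is the asserted inequality. The main obstacle is genuinely the deep input: proving that the reciprocal roots satisfy $|\omega_{j}|=\sqrt{q}$ (equivalently, the Hasse--Weil bound for $y^{r}=F(x)$), after which the degree estimate $D\le z-1$ is a routine conductor count on $\mathbb{P}^{1}$, with the nondegeneracy hypothesis $F\ne a g^{r}$ serving only to guarantee that $\chi\circ F$ is nontrivial and hence that $L(t)$ is an honest polynomial to which the Riemann Hypothesis applies.
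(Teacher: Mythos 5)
The paper gives no proof of this statement at all: it is quoted as a black box from \cite[Theorem 5.41]{Nie}, so your sketch can only be measured against the classical arguments. What you outline is the standard global proof, and it is essentially correct: lift $\chi$ by the norm, package the sums $S_s$ into an $L$-series, identify it with the Hecke $L$-function of the ray class character $\chi\circ F$ on $\mathbb{F}_q(x)$, bound its degree by a conductor count on a genus-zero field, and import purity $|\omega_j|=\sqrt{q}$ from the Riemann Hypothesis for the Kummer cover $y^r=F(x)$. This differs in presentation from the cited source, where Lidl--Niederreiter run the same mechanism in elementary dress (divisor characters, Gauss sums, and the Davenport--Hasse relation) rather than invoking RH for curves wholesale; your version makes the geometric content transparent at the cost of assuming the deep input, which is acceptable here precisely because the paper itself treats the theorem as an imported result. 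You also read the hypothesis correctly: writing $F=a\prod_{i=1}^{z}(x-\alpha_i)^{e_i}$, Galois stability of the multiplicities shows $F=ag^r$ with $g\in\mathbb{F}_q[x]$ if and only if $r\mid e_i$ for all $i$, so the hypothesis is exactly the statement that $\chi^{e_i}\neq 1$ for some $i$, i.e., that $\chi\circ F$ is ramified at a finite place; this both rules out constant-field characters (for which the $L$-series is not a polynomial) and feeds the conductor count.

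Two glosses deserve repair, though neither damages the bound. First, the conductor must be computed over closed points of $\mathbb{P}^1_{\mathbb{F}_q}$: the roots $\alpha_i$ need not be $\mathbb{F}_q$-rational, and they group into Galois orbits, so the tame finite part of $\mathfrak{f}$ (multiplicity one, since $r\mid q-1$ is prime to $p$) has degree at most $z$, plus possibly $1$ at infinity, recovering $D\leq \deg\mathfrak{f}-2\leq z-1$. Second, your $L(t)=\exp\bigl(\sum_{s\geq 1} S_s t^s/s\bigr)$ coincides with the Hecke $L$-function only when $\infty$ is ramified, i.e., when $r\nmid\deg F$; otherwise the Euler factor at $\infty$ must be reinstated, giving $S_s=-\chi_\infty^{\,s}-\sum_j\omega_j^s$ with $\chi_\infty$ a root of unity and now $D\leq z-2$, whence $|S_1|\leq 1+(z-2)\sqrt{q}\leq (z-1)\sqrt{q}$ still holds (note that the unramified-at-infinity case forces $z\geq 2$, since $z=1$ together with $r\mid\deg F$ would make $F$ an $r$-th power). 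With these points made explicit, your outline is a faithful reduction of the theorem to Weil's purity statement, which is the genuinely deep step in every known proof.
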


\begin{theorem}[Mason-Stothers]\label{abc} Let $k$ be an algebraically closed field and $a, b, c\in k[x]$ pairwise relatively prime polynomials such that $a+b=c$. If the derivatives of these polynomials are not all vanishing, then 
$$\max\{\deg(a), \deg(b), \deg(c)\}\le \deg(\rad(abc))-1,$$
where $\rad(abc)$ denotes the square-free part of $abc$.
\end{theorem}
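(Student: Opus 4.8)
The plan is to run the classical Wronskian argument, taking care at the one place where positive characteristic intervenes. First I would introduce the Wronskian-type polynomial $W := ab' - a'b \in k[x]$ and record the identities that flow from $a+b=c$. Differentiating gives $a'+b'=c'$, and a direct computation then shows
$$W = ab' - a'b = ac' - a'c = b'c - bc',$$
so the single polynomial $W$ simultaneously controls each of the three coprime pairs $(a,b)$, $(a,c)$, $(b,c)$. Before anything else I would dispose of the degenerate case: if $W=0$, then $ab'=a'b$ with $\gcd(a,b)=1$ forces $a\mid a'$ and $b\mid b'$, whence $a'=b'=0$ by degree considerations, and then $c'=a'+b'=0$, contradicting the hypothesis that the derivatives do not all vanish. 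Thus $W\neq 0$. This is exactly where the hypothesis is used, and it is genuinely needed in characteristic $p$, where for instance $x^p$ has vanishing derivative.

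Next I would extract a lower bound on $\deg W$ from the multiplicity structure. If $\alpha$ is a root of $a$ of multiplicity $m$, then $(x-\alpha)^{m-1}$ divides both $a$ and $a'$, hence divides $W=ab'-a'b$; running over the roots of $a$ shows that $a/\rad(a)$, a polynomial of degree $\deg a - \deg\rad(a)$, divides $W$. The identities $W=ac'-a'c$ and $W=b'c-bc'$ give the same conclusion with $a$ replaced by $c$ and by $b$. Since $a,b,c$ are pairwise coprime, the three divisors $a/\rad(a)$, $b/\rad(b)$, $c/\rad(c)$ are pairwise coprime, so their product divides $W$. Taking degrees yields
$$\deg W \ \geq\ \bigl(\deg a - \deg\rad(a)\bigr) + \bigl(\deg b - \deg\rad(b)\bigr) + \bigl(\deg c - \deg\rad(c)\bigr).$$

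For the upper bound I would simply observe that each representation of $W$ as a difference of two products has degree at most the sum of the two factor degrees minus one: $\deg W \leq \deg a + \deg b - 1$, and symmetrically $\deg W \leq \deg a + \deg c - 1$ and $\deg W \leq \deg b + \deg c - 1$. Combining the lower bound with $\deg W \leq \deg a + \deg b - 1$, and using $\deg\rad(abc) = \deg\rad(a)+\deg\rad(b)+\deg\rad(c)$ (valid by pairwise coprimality), the $\deg a$ and $\deg b$ terms cancel and leave $\deg c \leq \deg\rad(abc) - 1$; repeating with the other two upper bounds gives the same inequality for $\deg a$ and $\deg b$, hence the bound on the maximum. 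The only delicate points — and the ones I would watch most carefully — are the nonvanishing of $W$ (the characteristic-$p$ subtlety) and the coprimality bookkeeping that merges the three multiplicity factors into a single divisor of $W$; the remainder is formal degree arithmetic.
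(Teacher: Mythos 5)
Your proposal is correct. The paper states Theorem~\ref{abc} without proof, citing it as the classical Mason--Stothers theorem, so there is no internal argument to compare against; your write-up is the standard Wronskian proof of that result, and you correctly isolate the only two delicate points: the nonvanishing of $W=ab'-a'b$ (the one place the hypothesis that the derivatives are not all zero enters, which is precisely the characteristic-$p$ caveat, since $a\mid a'$ forces $a'=0$ rather than $\deg a=0$), and the pairwise-coprimality bookkeeping that makes $\frac{a}{\rad(a)}\cdot\frac{b}{\rad(b)}\cdot\frac{c}{\rad(c)}$ a divisor of the nonzero polynomial $W$, after which the conclusion is formal degree arithmetic as you describe.
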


\begin{lemma}\label{lem:linear}
Let $L, L_1\in \F_{q}[x]$ be $p^k$-linearized polynomials and  $\mathcal U\subseteq \F_q$ be an $\F_{p^k}$-vector space of dimension $m\ge 1$. Then the following hold.

\begin{enumerate}[\rm(i)]
\item The polynomial $\prod_{u\in \mathcal U}(x-u)$ is $p^k$-linearized of degree $p^{mk}$.
\item The map $y\mapsto L(y)$ from $\F_q$ to itself is $\F_{p^k}$-linear.
\item $L$ divides $L_1$ if and only if $L_1(x)=M(L(x))$ for some $p^k$-linearized polynomial $M\in \F_q[x]$. Moreover, if $L_1(x)=x^{p^{kd}}-x$, $d\geq 1,$ then $M(L(x))=L(M(x))$. 
\end{enumerate}
\end{lemma}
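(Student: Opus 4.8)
The plan is to treat the three items in turn, the first two being direct computations and the third carrying the real content. For (ii), writing $L(x)=\sum_{i=0}^s a_ix^{p^{ki}}$ with $a_i\in\F_q$, additivity follows from the Frobenius identity $(\alpha+\beta)^{p^{ki}}=\alpha^{p^{ki}}+\beta^{p^{ki}}$, while $\F_{p^k}$-homogeneity follows from $c^{p^{ki}}=c$ for every $c\in\F_{p^k}$; thus $L(c\alpha+\beta)=cL(\alpha)+L(\beta)$, and $L$ maps $\F_q$ into itself since its coefficients lie in $\F_q$. For (i), I would induct on $m$. The case $m=0$ (so $\mathcal U=\{0\}$) gives $x$. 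For the step, pick $\alpha\in\mathcal U$ outside a hyperplane $\mathcal U'$ so that $\mathcal U=\mathcal U'\oplus\F_{p^k}\alpha$, and write $A(x)=\prod_{c\in\F_{p^k}}A'(x-c\alpha)$, where $A'(x)=\prod_{u\in\mathcal U'}(x-u)$ is $p^k$-linearized by the inductive hypothesis. Using the $\F_{p^k}$-linearity of $A'$ from (ii) gives $A'(x-c\alpha)=A'(x)-cA'(\alpha)$, and since $A'(\alpha)\neq0$ (as $\alpha\notin\mathcal U'$) the product collapses via $\prod_{c\in\F_{p^k}}(y-c\gamma)=y^{p^k}-\gamma^{p^k-1}y$ to $A(x)=A'(x)^{p^k}-A'(\alpha)^{p^k-1}A'(x)$, which is again $p^k$-linearized; the degree equals $\#\mathcal U=p^{mk}$.

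The heart is (iii). The ``if'' direction is immediate: a $p^k$-linearized $M$ satisfies $M(0)=0$, so $M(y)=y\cdot N(y)$ and hence $M(L(x))=L(x)\,N(L(x))$ is divisible by $L$. For ``only if'' I would establish a linearized division algorithm: for nonzero $p^k$-linearized $L$ and any $p^k$-linearized $L_1$, there exist $p^k$-linearized $M,R\in\F_q[x]$ with $L_1=M(L(x))+R$ and $\deg R<\deg L$. This is proved by repeated leading-term cancellation: if $cx^{p^{ks}}$ is the top term of $L_1$ and $a_tx^{p^{kt}}$ that of $L$ with $t\le s$, then subtracting $b\cdot L(x)^{p^{k(s-t)}}$ with $b=c\,a_t^{-p^{k(s-t)}}$ lowers the degree while preserving $p^k$-linearity, and one iterates. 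Given $L\mid L_1$, both $L_1$ and $M(L(x))$ are divisible by $L$, so $L\mid R$; as $\deg R<\deg L$ this forces $R=0$, i.e.\ $L_1=M(L(x))$.

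For the ``moreover,'' take $L$ monic (as in the intended application $L=\prod_{u\in\mathcal U}(x-u)$); then $L\mid x^{p^{kd}}-x$ shows every root of $L$ lies in $\F_{p^{kd}}$, whence the coefficients of $L$, being elementary symmetric functions of these roots, lie in $\F_{p^{kd}}$ and satisfy $a_i^{p^{kd}}=a_i$. A direct expansion then gives $L(L_1(x))=\sum_i a_ix^{p^{k(i+d)}}-\sum_i a_ix^{p^{ki}}$ and $L_1(L(x))=\sum_i a_i^{p^{kd}}x^{p^{k(i+d)}}-\sum_i a_ix^{p^{ki}}$, which coincide precisely because $a_i^{p^{kd}}=a_i$; hence $L$ and $L_1$ commute under composition. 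Substituting $L_1=M(L(x))$ into $L(L_1(x))=L_1(L(x))$ yields $(L\circ M)\circ L=(M\circ L)\circ L$, and since composition on the right by the nonconstant $L$ is injective on polynomials, cancelling $L$ gives $L(M(x))=M(L(x))$.

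The step I expect to be the main obstacle is the linearized division in (iii): one must verify that the remainder produced by leading-term cancellation stays $p^k$-linearized and that the degree strictly drops at each stage, so the process terminates with a genuinely $p^k$-linearized pair $(M,R)$. The commutation in the ``moreover'' then hinges on the (easily overlooked) point that the coefficients of the monic $L$ lie in $\F_{p^{kd}}$, which is exactly what makes $x^{p^{kd}}-x$ behave like a central element with respect to $L$; without monicity this fails, as the example $L=cx$ with $c\notin\F_{p^k}$ already shows.
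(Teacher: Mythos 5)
Your proof is correct, but it takes a genuinely different route from the paper's: the paper disposes of Lemma~\ref{lem:linear} almost entirely by citation, quoting \cite[Theorem 3.52]{Nie} for (i), deducing (ii) from (i), and invoking \cite[Exercise 3.68]{Nie} together with the proof of \cite[Lemma 3.4]{R21} for the two halves of (iii). You instead give self-contained arguments: an induction on $\dim\mathcal U$ using the collapse $\prod_{c\in\F_{p^k}}(y-c\gamma)=y^{p^k}-\gamma^{p^k-1}y$ for (i); a linearized division algorithm (leading-term cancellation by $b\,L(x)^{p^{k(s-t)}}$, which keeps every intermediate polynomial $p^k$-linearized and strictly drops degree, so that $L\mid L_1$ forces the remainder $R$ to vanish) for the ``only if'' half of (iii); and, for the ``moreover'', the observation that a monic separable linearized divisor of $x^{p^{kd}}-x$ has all its coefficients in $\F_{p^{kd}}$, hence commutes with $x^{p^{kd}}-x$, after which right-cancellation of the nonconstant $L$ finishes. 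All of these steps check out. What the paper's route buys is brevity; what yours buys is a proof the reader can verify without opening the references, and an explicit record of where each hypothesis enters.

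That record exposes a genuine subtlety: as you observe, the ``moreover'' clause is false for general $p^k$-linearized $L\in\F_q[x]$, so a hypothesis such as monicity must be added for it to hold. Your counterexample needs a small correction, though. For $L=cx$ and $L_1=x^{p^{kd}}-x$, the unique linearized $M$ with $M(L(x))=L_1(x)$ is $M(y)=c^{-p^{kd}}y^{p^{kd}}-c^{-1}y$, and then $L(M(x))=c^{1-p^{kd}}x^{p^{kd}}-x$; commutation therefore fails precisely when $c\notin\F_{p^{kd}}$, not when $c\notin\F_{p^k}$ as you wrote (the two conditions agree only for $d=1$; for instance, with $q=p^2$, $k=1$, $d=2$, every $c\in\F_q^*$ commutes). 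More generally, the ``moreover'' holds exactly when the leading coefficient of $L$ lies in $\F_{p^{kd}}$, and monicity is the clean sufficient condition. This defect in the printed statement is harmless for the paper: the commutation clause is only ever invoked in the proof of Theorem~\ref{thm:main2}, where $L$ is the monic polynomial $A(x)=\prod_{u\in\mathcal U}(x-u)$, and the other uses of (iii), in Lemma~\ref{lem:aux} and Proposition~\ref{prop:affine}, rely only on the first half. So your argument establishes the lemma in exactly the generality in which it is actually used, while flagging a hypothesis the paper's formulation omits.
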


\begin{proof}
Assertion  (i) follows  from \cite[Theorem 3.52]{Nie}, and  (ii) follows directly from  (i). 
Finally, the first part of (iii) follows from  \cite[Exercise 3.68]{Nie}, and,  for the second part, see the proof of   \cite[Lemma 3.4]{R21}. 
\end{proof}

\begin{lemma}\label{novo}
Let $n$ be a positive integer and $B(x)=ax^{{q^i}}+bx^{q^{j}}\in \F_{q^n}[x]$, with $a, b\ne 0$ and $i>j\ge 0$. For each $m\ge i$, there exist an integer $s_m$, with $0\le s_m<i$, an element $c_m\in \F_{q^n}^*$, and a $q$-linearized polynomial $T_m\in \F_{q^n}[x]$ such that 
$$x^{q^{m}}=B(T_{m}(x))+c_mx^{q^{s_m}}.$$
\end{lemma}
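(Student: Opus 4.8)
The plan is to treat this as a Euclidean-type reduction in the (noncommutative) ring of $q$-linearized polynomials under composition: since $B$ is itself $q$-linearized, one can repeatedly strip off the top $q$-power of $x^{q^m}$ by composing $B$ with a suitable $q$-linearized monomial, lowering the $q$-degree by $i-j$ at each step until it drops below $i$. To keep the bookkeeping clean I would prove the slightly stronger statement by induction on $m$: \emph{for every $\gamma\in\F_{q^n}^*$ and every $m\ge i$, there exist a $q$-linearized $T\in\F_{q^n}[x]$, an integer $0\le s<i$, and $c\in\F_{q^n}^*$ with $\gamma x^{q^m}=B(T(x))+c\,x^{q^s}$.} The lemma is then the special case $\gamma=1$.

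For the single reduction step, given $\gamma\in\F_{q^n}^*$ and $m\ge i$, I would first note that the Frobenius $x\mapsto x^{q^i}$ is an automorphism of $\F_{q^n}$, so there is a unique $\beta\in\F_{q^n}^*$ with $a\beta^{q^i}=\gamma$. Setting $P(x)=\beta x^{q^{m-i}}$, a direct computation gives
$$\gamma x^{q^m}-B(P(x))=\gamma x^{q^m}-\bigl(a\beta^{q^i}x^{q^m}+b\beta^{q^j}x^{q^{m-i+j}}\bigr)=-\,b\beta^{q^j}\,x^{q^{m-(i-j)}},$$
which is a nonzero monomial (as $b,\beta\ne0$) of $q$-degree $m-(i-j)$. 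Since $i>j\ge0$ and $m\ge i$, the new exponent $m'=m-(i-j)$ satisfies $0\le j\le m'<m$, so the reduction never produces a negative exponent and strictly decreases the $q$-degree.

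To close the induction, if $m'<i$ I take $T=P$, $s=m'$, $c=-b\beta^{q^j}$. If instead $m'\ge i$, then $i\le m'<m$, so the induction hypothesis applied to $(-b\beta^{q^j})\,x^{q^{m'}}$ yields a $q$-linearized $T'$, an $s'\in[0,i)$, and $c\in\F_{q^n}^*$ with $(-b\beta^{q^j})x^{q^{m'}}=B(T'(x))+c\,x^{q^{s'}}$; combining, $\gamma x^{q^m}=B(P(x))+B(T'(x))+c\,x^{q^{s'}}$, and I set $T=P+T'$, $s=s'$. The one point requiring care — and the reason I strengthen the statement to an arbitrary leading coefficient $\gamma$ rather than simply normalizing it to $1$ — is that composition with $B$ is additive but \emph{not} $\F_{q^n}$-linear, so one cannot pull the scalar $-b\beta^{q^j}$ through $B$. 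Folding that scalar into the leading coefficient handled by the induction hypothesis circumvents this obstacle entirely, and the final merge $B(P)+B(T')=B(P+T')$ uses only the additivity of the $q$-linearized polynomial $B$. Termination is immediate since the nonnegative integer $q$-degree strictly decreases by $i-j\ge1$ at each step, and $T=P+T'$ is again $q$-linearized, which completes the induction.
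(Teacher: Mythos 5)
Your proof is correct, and it organizes the argument genuinely differently from the paper's. The paper runs the induction \emph{upward}: from the identity $x^{q^{M}}=B(T_{M}(x))+c_{M}x^{q^{s_{M}}}$ it gets the case $M+1$ by substituting $x\mapsto x^{q}$, and when the leftover exponent $s_{M}+1$ reaches $i$ it re-absorbs the stray monomial by substituting a twisted rescaling $\varepsilon x$ (with $\varepsilon^{q^{i}}=c_{M}$) into the base identity at $m=i$. You instead run a \emph{descent}: strip the leading term of $\gamma x^{q^{m}}$ by composing $B$ with the monomial $\beta x^{q^{m-i}}$ (where $a\beta^{q^{i}}=\gamma$), lowering the exponent by $i-j$ at each step; this is exactly right division by $B$ in the ring of $q$-linearized polynomials under composition. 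Both proofs face the same obstruction --- that $F\mapsto B\circ F$ is additive but not $\F_{q^n}$-linear --- and resolve it with the same underlying trick (solving $a\beta^{q^i}=\gamma$ via the Frobenius automorphism), but deploy it differently: the paper pushes the scalar into a rescaled copy of the base-case identity, while you fold it into a strengthened induction hypothesis with arbitrary leading coefficient $\gamma$, which is the cleaner and more standard device. The paper's formulation yields an explicit step-by-step recurrence $T_{M+1}(x)=T_{M}(x^{q})+T_{i}(\varepsilon x)$ for the polynomials $T_m$; yours yields a self-contained strong induction that terminates after roughly $(m-i)/(i-j)$ reduction steps and proves the marginally more general statement for any monomial $\gamma x^{q^{m}}$ with $\gamma\in\F_{q^n}^{*}$. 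Both constructions produce the same remainder data, namely $s_{m}\in[j,i)$ with $s_{m}\equiv m\pmod{i-j}$.
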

\begin{proof}
We proceed by induction on $m$. If $m=i$, we can take $T_i(x)=a^{-q^{-i}}x$, $s_i=j$,  and $c_i=-ba^{q^{j-i}}$. Suppose the result holds for some  integer $M \geq i$, i.e., 
$x^{q^{M}}=B(T_{M}(x))+c_Mx^{q^{s_M}}$. Replacing $x$ with $x^q$ yields
\begin{equation}\label{step1}
x^{q^{M+1}}=B(T_{M}(x^q))+c_Mx^{q^{s_M+1}}.
\end{equation}
If $s_M+1<i$,  then we are finished. Otherwise,
we must have  $s_M+1=i$, as  $s_M<i$. In this case, we consider the base-step 
$x^{q^{i}}=B(T_{i}(x))+c_ix^{q^{s_i}},$ and  replace $x$ with $\varepsilon x$,  where  $\varepsilon^{q^i}=c_M$, obtaining 
\begin{equation}\label{step2}
c_Mx^{q^{i}}=B(T_i(\varepsilon x))+c_i\varepsilon^{q^{s_i}}x^{q^{s_i}}.
\end{equation}
Since $s_M+1=i$, combining Eqs. \eqref{step1} and \eqref{step2} entails
$$x^{q^{M+1}}=B(T_{M}(x^q))+B(T_i(\varepsilon x))+c_i\varepsilon^{q^{s_i}}x^{q^{s_i}},$$
and thus  defining  $T_{M+1}(x)=T_M(x^q)+T_i(\varepsilon x)$, $c_{M+1}=c_i\varepsilon^{q^{s_i}}$, and $s_{M+1}=s_i$, the proof is concluded.
\end{proof}

Next, we present some preliminary results on minimal value set polynomials. The following is proved in \cite[Theorem 1]{Mills}. 

\begin{theorem}[Mills]\label{thm:mills}
Let $F\in \F_q[x]$ be an MVSP with value set $V_{F}=\{\gamma_0, \ldots, \gamma_r\}$, where $\# V_F=r+1>2$ and $\# (F^{-1}(\gamma_0)\cap \F_q)\le \# (F^{-1}(\gamma_i)\cap \F_q)$ for  $i=1,\ldots, r$. Set $L=\gcd(F-\gamma_0,x^q-x)$  and $T=\prod\limits_{i=0}^{r}(x-\gamma_i)$. Then there exist positive integers $m, k, v$  and polynomials $A, B, N\in \F_q[x]$, where $L\nmid N$  such that the following hold.
\begin{enumerate}[\rm(i)]
\item $\F_{p^k} \subseteq \F_q$, $v$ divides $p^{k}-1$, and $vr+1=p^{mk}$.
\item $F=L^vN^{p^{mk}}+\gamma_0$.
\item $L=A^{p^{mk}}x+B^p$.
\item $T(x+\gamma_0)/x=\sum\limits_{i=0}^{m}w_ix^{\frac{p^{ki}-1}{v}}  \in \F_q[x]$,  with $w_m=1$ and $w_0\ne 0$.
\end{enumerate}
\end{theorem}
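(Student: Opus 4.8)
The plan is to read off the structure of $F$ from the factorization of its fibers over the value set. Write $d=\deg F$, $n_i=\#(F^{-1}(\gamma_i)\cap\fq)$, and $L_i=\gcd(F-\gamma_i,x^q-x)$, so that $L_i$ is the squarefree product of the factors $(x-a)$ with $a\in\fq$ and $F(a)=\gamma_i$; in particular $L=L_0$. Since the fibers partition $\fq$, we get $\prod_{i=0}^r L_i=x^q-x$, and setting $M_i=(F-\gamma_i)/L_i$ we obtain the identity $\prod_{i=0}^r(F-\gamma_i)=(x^q-x)\prod_{i=0}^r M_i$, whence $\sum_i n_i=q$ and $\deg\bigl(\prod_i M_i\bigr)=(r+1)d-q$. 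A first reduction is to show $F'\neq 0$: if $F'=0$ then $F=\widehat F(x^p)$ with $V_{\widehat F}=V_F$ and $\deg\widehat F=d/p$, so the elementary pigeonhole bound $\#V_g\ge\lfloor(q-1)/\deg g\rfloor+1$ applied to $g=\widehat F$ would give $r+1\ge\lfloor p(q-1)/d\rfloor+1\ge 2\lfloor(q-1)/d\rfloor+1=2r+1$, contradicting $r\ge 2$. Hence $F'\neq 0$ and $\deg F'\le d-1$.

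Next I would force a perfect-power shape on the fibers using the Mason--Stothers theorem (Theorem~\ref{abc}). For each $j\neq 0$, the coprime polynomials $F-\gamma_j$ and $F-\gamma_0$ differ by the nonzero constant $\gamma_j-\gamma_0$, and since $F'\neq 0$ Theorem~\ref{abc} applies and yields $d\le\deg\rad(F-\gamma_0)+\deg\rad(F-\gamma_j)-1$. Writing $D_i=\deg\gcd(F-\gamma_i,F')=d-\deg\rad(F-\gamma_i)$ for the total multiplicity excess of the $i$-th fiber, this reads $D_0+D_j\le d-1$, while coprimality of the $F-\gamma_i$ (hence of the $\gcd(F-\gamma_i,F')$, which all divide $F'$) forces $\sum_{i=0}^r D_i\le\deg F'\le d-1$. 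Combined with $\sum_i n_i=q$ and $(r+1)d\ge q$, these inequalities should pin down the multiplicities: every $\fq$-root of $F-\gamma_0$ occurs with one common multiplicity $v$, and the remaining (non-$\fq$) part of $F-\gamma_0$ is an exact $p^{mk}$-th power, giving $F-\gamma_0=L^vN^{p^{mk}}$ with $L\nmid N$, which is (ii). This is equivalent to saying $V_F-\gamma_0=\mathcal{U}^v$ for an $\F_{p^k}$-subspace $\mathcal{U}\subseteq\fq$ with $\#\mathcal{U}=p^{mk}$, and then Remark~\ref{rem2.3} gives $r+1=\#\mathcal{U}^v=(p^{mk}-1)/v+1$, i.e. $vr+1=p^{mk}$; the requirement that the $v$-th roots of unity organizing the fibers of $u\mapsto u^v$ lie in $\F_{p^k}$ forces $v\mid p^k-1$, completing (i).

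To finish, differentiating $F-\gamma_0=L^vN^{p^{mk}}$ gives $F'=vL^{v-1}L'N^{p^{mk}}$ (nonzero, since $v\mid p^k-1$ forces $p\nmid v$), and comparing $\gcd(F',x^q-x)$ with the common-multiplicity structure of $L$ should show that $L'$ is a $p^{mk}$-th power while $L-xL'$ is a $p$-th power, i.e. $L=A^{p^{mk}}x+B^p$, which is (iii). For (iv), once $V_F-\gamma_0=\mathcal{U}^v$ is known, $A(x)=\prod_{u\in\mathcal{U}}(x-u)=\sum_{i=0}^m w_i x^{p^{ki}}$ is $p^k$-linearized of degree $p^{mk}$ with $w_m=1$ and $w_0\neq 0$ by Lemma~\ref{lem:linear}(i); using that $u\mapsto u^v$ is $v$-to-one from $\mathcal{U}\setminus\{0\}$ onto $\mathcal{U}^v\setminus\{0\}$ one gets $A(x)/x=\prod_{\delta\in\mathcal{U}^v\setminus\{0\}}(x^v-\delta)$, and since every exponent $p^{ki}-1$ is divisible by $v$, substituting $x^v$ for the variable in $T(x+\gamma_0)=x\prod_{\delta\neq 0}(x-\delta)$ yields $T(x+\gamma_0)/x=\sum_{i=0}^m w_i x^{(p^{ki}-1)/v}$, which is (iv). I expect the main obstacle to be the middle step: turning the slack Mason--Stothers and counting inequalities into the rigid conclusion that the $\fq$-roots of the minimal fiber all share a single multiplicity $v$ and that the complementary factor is an exact $p^{mk}$-th power. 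Squeezing those inequalities simultaneously to equalities is precisely where the minimality of the value set, together with the choice of $\gamma_0$ as the smallest fiber, must be used in full, and this is the technical heart of the argument.
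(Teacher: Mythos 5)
The paper does not prove this statement at all --- it is quoted from Mills \cite{Mills} (Theorem 1 there), so the only comparison to make is whether your argument stands on its own as a proof. It does not: what you have written is a strategy outline whose central step is missing, and you say so yourself (``these inequalities \emph{should} pin down the multiplicities\dots'', ``this is the technical heart of the argument''). Concretely, nothing in the proposal derives from the Mason--Stothers and counting inequalities that (a) all $\F_q$-roots of $F-\gamma_0$ share one common multiplicity $v$, (b) the non-rational factor is an exact $p^{mk}$-th power, and (c) the exponents satisfy the precise arithmetic relations $vr+1=p^{mk}$ and $v\mid p^k-1$. Those three facts are essentially the entire content of Mills' theorem; the degree inequalities you list are far too slack by themselves, since they only bound sums of multiplicity excesses and cannot distinguish, say, rational roots of unequal multiplicities from the rigid shape $L^vN^{p^{mk}}$. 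Mills' actual argument extracts this structure from a much finer analysis, in the spirit of the functional identity $T(F)=\theta(x^q-x)F'$ of Theorem~\ref{MillsBor}, and occupies several pages.

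Two further specific defects. First, your bookkeeping rests on the identity $\deg\gcd(F-\gamma_i,F')=d-\deg\rad(F-\gamma_i)$, which is false in characteristic $p$: a root of $F-\gamma_i$ whose multiplicity is divisible by $p$ is \emph{not} lost upon differentiation, so in general $\deg\gcd(F-\gamma_i,F')\ge d-\deg\rad(F-\gamma_i)$ with strict inequality exactly in the situation relevant here (the factor $N^{p^{mk}}$). Hence your two inequalities $D_0+D_j\le d-1$ (from Mason--Stothers) and $\sum_i D_i\le d-1$ (from coprimality of the gcd's inside $F'$) concern two different quantities and cannot be combined as stated. Second, the claim that (ii) ``is equivalent to'' $V_F-\gamma_0=\mathcal{U}^v$ for an $\F_{p^k}$-subspace $\mathcal{U}$ is unjustified and, in effect, circular: (ii) is a statement about a single fibre of $F$, whereas the subspace structure of the value set is global information; in the paper that structure is \emph{deduced from} item (iv) (see Lemma~\ref{cor:vec}), not the other way around. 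Since your derivations of (i), (iii) and (iv) all lean on this unproved equivalence --- and (iii) again only ``should'' follow --- none of the four assertions is actually established. The part of the argument that is sound is peripheral: the reduction to $F'\neq 0$ and the purely formal passage from the subspace description of the value set to the shape of $T(x+\gamma_0)/x$ in (iv).
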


The following result  is proved in  \cite[Theorem 3.1]{Bor2},  and in \cite[Lemma 2.4]{Bor1}.

\begin{theorem}\label{MillsBor}
Let $F \in \F_q[x]$ be a polynomial with value set $V_{F}=\{\gamma_0, \ldots, \gamma_r\}$, where $r>1$.
Then $F$ is an MVSP if and only if there exist a monic polynomial $T \in \F_q [x]$ and $\theta \in \F_q^*$ such that
\begin{equation}\label{MVSP}
T(F)=\theta(x^q-x)F^{\prime}.
\end{equation}
Furthermore, the polynomial $T$ is uniquely determined as $T=\prod\limits_{i=0}^{r}(x-\gamma_i)$, and if $\gamma_0, \ldots, \gamma_r$ are ordered as in Theorem
\ref{thm:mills}, then $\theta=-T^{\prime}\left(\gamma_{i}\right)$ for all $\gamma_{i} \in V_{F}\backslash {\gamma_0}$.
\end{theorem}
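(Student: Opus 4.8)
Throughout I set $T(y)=\prod_{i=0}^r(y-\gamma_i)$, the monic polynomial whose roots are exactly the value set, and $d=\deg F$. The key preliminary observation, valid for any $F$, is that $x^q-x\mid T(F)$: for each $\alpha\in\F_q$ one has $F(\alpha)\in V_F$, so $T(F(\alpha))=\prod_i(F(\alpha)-\gamma_i)=0$, and the $\alpha$ range over all $q$ roots of $x^q-x$. I may therefore write $T(F)=(x^q-x)H$ with $H\in\F_q[x]$. Differentiating this identity and evaluating at $\alpha\in\F_q$, where $x^q-x$ vanishes and $(x^q-x)'=-1$, gives the pointwise relation $H(\alpha)=-T'(F(\alpha))\,F'(\alpha)$ for all $\alpha\in\F_q$. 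The whole theorem then amounts to showing that, for an MVSP, $H=\theta F'$ for a suitable $\theta\in\F_q^*$, together with the converse.

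For the ``only if'' direction, assume $F$ is an MVSP, so $r+1=\lfloor(q-1)/d\rfloor+1$ and hence $\deg H=(r+1)d-q\le d-1<q$; also $\deg F'\le d-1<q$. It therefore suffices to exhibit a constant $\theta\in\F_q^*$ with $H(\alpha)=\theta F'(\alpha)$ for every $\alpha\in\F_q$: then $x^q-x\mid H-\theta F'$, and the degree bounds force $H=\theta F'$, i.e. $T(F)=\theta(x^q-x)F'$. By the pointwise relation, this reduces to showing that $T'(F(\alpha))$ equals a fixed constant $-\theta$ at every $\alpha$ with $F'(\alpha)\ne0$. Here I would use Theorem~\ref{thm:mills}. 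From part (iv), $T(x+\gamma_0)=x\sum_{i=0}^m w_i x^{(p^{ki}-1)/v}$; differentiating term by term, the multiplier $1+(p^{ki}-1)/v$ brought down from each nonconstant term reduces modulo $p$ to the common value $(v-1)v^{-1}$, so $T'(x+\gamma_0)=w_0+(v-1)v^{-1}\sum_{i\ge1}w_ix^{(p^{ki}-1)/v}$. Evaluating at $\gamma_j-\gamma_0$ for $j\ne0$ and using $T(\gamma_j)=0$ (which gives $\sum_{i\ge1}w_i(\gamma_j-\gamma_0)^{(p^{ki}-1)/v}=-w_0$) yields $T'(\gamma_j)=w_0/v$, a constant, so I set $\theta=-w_0/v\in\F_q^*$. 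It remains to check the identity $H(\alpha)=\theta F'(\alpha)$ on the fiber $F^{-1}(\gamma_0)$: from parts (ii)--(iii), $F-\gamma_0=L^vN^{p^{mk}}$ with $L'=A^{p^{mk}}$, so $F'=vL^{v-1}(AN)^{p^{mk}}$; when $v>1$ this vanishes at every root of $L$, i.e. at every $\alpha\in F^{-1}(\gamma_0)$, while when $v=1$ one computes $T'(\gamma_0)=w_0=-\theta$, so the identity holds in either case. This proves the relation with $\theta=-T'(\gamma_i)$ for all $\gamma_i\ne\gamma_0$.

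For the ``if'' direction, suppose $T_0(F)=\theta(x^q-x)F'$ for some monic $T_0\in\F_q[x]$ and $\theta\in\F_q^*$. Evaluating at any $\alpha\in\F_q$ makes the right-hand side vanish, so $V_F$ lies in the zero set of $T_0$ and hence $T\mid T_0$. Comparing degrees, $(\deg T_0)\,d=q+\deg F'\le q+d-1$, so $\deg T_0\le\lfloor(q-1)/d\rfloor+1$, whereas $\deg T_0\ge\deg T=\#V_F$. Since the fibers of $F$ have at most $d$ elements and partition $\F_q$, every polynomial of degree $d$ satisfies $\#V_F\ge\lceil q/d\rceil=\lfloor(q-1)/d\rfloor+1$; combining the inequalities forces equality throughout, so $F$ is an MVSP and $\deg T_0=\deg T$, whence $T_0=T$. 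The constant $\theta$ is uniquely determined as the proportionality factor $T(F)/((x^q-x)F')$, and its value $-T'(\gamma_i)$, $\gamma_i\ne\gamma_0$, is the one found above.

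The step I expect to be the main obstacle is establishing that $T'$ is constant on $V_F\setminus\{\gamma_0\}$. The pointwise identity $H(\alpha)=-T'(F(\alpha))F'(\alpha)$ is elementary, but upgrading it to a genuine proportionality $H=\theta F'$ of polynomials hinges precisely on this constancy, which is invisible without the explicit shape of $T$ near $\gamma_0$ and the factorization of $F-\gamma_0$ furnished by Mills' theorem. The most delicate point inside it is the separation of the smallest fiber $\gamma_0$: there $F'$ need not vanish exactly in the borderline case $v=1$, and the identity must instead be salvaged through the independent evaluation $T'(\gamma_0)=-\theta$.
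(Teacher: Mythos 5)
Your proof is correct. A point of comparison worth noting first: the paper does not actually prove Theorem~\ref{MillsBor} internally --- it is quoted from \cite[Theorem 3.1]{Bor2} and \cite[Lemma 2.4]{Bor1} --- so the only fair comparison is with the spirit of those cited arguments, which, like yours, rest on Mills' structure theorem (Theorem~\ref{thm:mills}) for the forward direction and a degree count for the converse. Your reconstruction is sound at every step. The two pillars hold up under checking: (a) writing $T(F)=(x^q-x)H$ and differentiating gives the pointwise relation $H(\alpha)=-T'(F(\alpha))F'(\alpha)$ on $\mathbb{F}_q$, and for an MVSP the bound $\deg H=(r+1)d-q\le d-1<q$ lets you upgrade any pointwise proportionality to the polynomial identity \eqref{MVSP}; (b) the constancy $T'(\gamma_j)=w_0/v$ for $j\ne 0$ is exactly right: from Theorem~\ref{thm:mills}(iv) the exponents $e_i=\frac{p^{ki}-1}{v}+1$ satisfy $e_i\equiv (v-1)v^{-1}\pmod p$ for $i\ge 1$, and combining the differentiated expression with $T(\gamma_j)=0$ gives $T'(\gamma_j)=w_0\bigl(1-\frac{v-1}{v}\bigr)=w_0/v\ne 0$, so $\theta=-w_0/v\in\mathbb{F}_q^*$ works and agrees with the stated value $-T'(\gamma_i)$. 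Your treatment of the exceptional fiber over $\gamma_0$ is also the right case split: $F'=vL^{v-1}(AN)^{p^{mk}}$ vanishes on the roots of $L=\gcd(F-\gamma_0,x^q-x)$ when $v>1$, while for $v=1$ the polynomial $T'$ is globally the constant $w_0=-\theta$, so the identity holds on that fiber too. The converse chain $\lceil q/d\rceil\le \#V_F=\deg T\le \deg T_0\le \lfloor (q-1)/d\rfloor+1=\lceil q/d\rceil$ forces equality throughout, which simultaneously yields that $F$ is an MVSP and that $T_0=T$ (both monic with $T\mid T_0$). The only elisions are trivial: in the converse you implicitly use that $T_0$ is nonconstant and $F'\ne 0$, both immediate because $T_0(F)$ is a nonzero polynomial (being a monic polynomial evaluated at a nonconstant $F$, or the constant $1$), and you use $d<q$ for the degree argument, which follows from $\#V_F\ge 3$. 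None of these affects the validity of the argument.
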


\subsection{Frobenius Nonclassical Curves}

An irreducible plane curve $\mathcal{C}$, defined over $\mathbb{F}_q$, is called $\mathbb{F}_q$-Frobenius nonclassical if the $\mathbb{F}_q$-Frobenius map takes each nonsingular point $P \in \mathcal{C}$ to the tangent line to $\mathcal{C}$ at $P$. These curves, introduced by St\"ohr and Voloch in \cite{SV}, possess important geometric and arithmetic properties, making their characterization highly desirable. For instance, $\mathbb{F}_q$-Frobenius nonclassical curves of degree $d$ are known to have at least $d( q+2 - d)$ $\mathbb{F}_q$-rational points, as shown in \cite[Theorem 1.3]{BoHo}. A well-known example of an $\mathbb{F}_{q^2}$-Frobenius nonclassical curve is the Hermitian curve defined by the equation $y^{q+1} = x^q + x$. For further details, see \cite[Chapter 8]{HKT} and \cite{SV}.

Despite the few known examples in the literature, the complete characterization of Frobenius nonclassical curves remains an open problem. Partial characterizations of $\mathbb{F}_{p^k}$-Frobenius nonclassical curves of type  $y^d = f(x)$ have been provided,  particularly for the cases corresponding to $k \leq 3$ \cite{Bor0, BR, Garcia}. A consequence of our  results  on the classification of MVSPs is the characterization of  all $\mathbb{F}_{p^4}$-Frobenius nonclassical curves  of type  $y^d = f(x)$.  Furthermore,  Theorem~\ref{FNC} establishes that 
Conjecture~\ref{conj}, once   proved,  entails   the  complete characterization of all $\mathbb{F}_{p^k}$-Frobenius nonclassical curves  of type  $y^d = f(x)$. 

\section{Proof of Theorem \ref{thm:main1}}

The proof of Theorem  \ref{thm:main1}  is divided into two parts. First, we prove that the conditions on $S$ in Theorem  \ref{thm:main1} are necessary.

\begin{lemma}\label{cor:vec}
Let $F\in \F_q[x]$ be an MVSP  with value set $V_F$ of size $\# V_F>2$. Then there exist a subfield $\F_{p^k}\subseteq \F_q$, a divisor $v$ of $p^k-1$, and an $\F_{p^k}$-vector space $\mathcal U\subseteq \F_q$, with $1 \in\mathcal U$  such that $V_F=a\cdot \mathcal U^v+b$ for some $a, b\in\F_q$.

\end{lemma}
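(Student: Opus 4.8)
The plan is to start from the structural description of MVSPs provided by Mills' theorem (Theorem~\ref{thm:mills}). Given $F \in \F_q[x]$ an MVSP with $\#V_F = r+1 > 2$, order the value set $V_F = \{\gamma_0, \ldots, \gamma_r\}$ as in that theorem, so that $\gamma_0$ corresponds to a fiber of minimal size. Then Theorem~\ref{thm:mills} directly furnishes positive integers $m, k, v$ with $\F_{p^k} \subseteq \F_q$, $v \mid p^k - 1$, and $vr + 1 = p^{mk}$, together with the explicit shape of $T(x) = \prod_{i=0}^r (x - \gamma_i)$ in part~(iv):
\begin{equation*}
\frac{T(x + \gamma_0)}{x} = \sum_{i=0}^m w_i x^{\frac{p^{ki} - 1}{v}}, \qquad w_m = 1,\ w_0 \neq 0.
\end{equation*}
The whole content of the lemma is to read off from this identity that the shifted value set $V_F - \gamma_0 = \{\gamma_i - \gamma_0\}$ has the form $a \cdot \mathcal{U}^v$ for a suitable $\F_{p^k}$-subspace $\mathcal{U}$. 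I would then take $b = \gamma_0$ and absorb the scalar $a$ at the end.

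The key step is to substitute $x = y^v$ into the displayed identity. Writing $\gamma_i - \gamma_0$ for the translated values, the roots of $T(x+\gamma_0)/x$ are exactly the nonzero elements $\gamma_i - \gamma_0$ for $i = 1, \ldots, r$. After the substitution $x = y^v$, the right-hand side becomes $\sum_{i=0}^m w_i y^{p^{ki} - 1}$, so that $y \cdot \sum_{i=0}^m w_i y^{p^{ki}-1} = \sum_{i=0}^m w_i y^{p^{ki}}$ is a $p^k$-linearized polynomial $P(y)$ of degree $p^{mk}$. By Lemma~\ref{lem:linear}(ii), the map $y \mapsto P(y)$ is $\F_{p^k}$-linear, so its set of roots $\mathcal{W} = \{y : P(y) = 0\}$ is an $\F_{p^k}$-vector space; since $P$ has $p^{mk}$ distinct roots (it has no repeated linear factors, as $w_0 \neq 0$ forces separability), $\mathcal{W}$ has dimension exactly $m$. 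The nonzero $v$-th powers of elements of $\mathcal{W}$ are precisely the roots of $T(x+\gamma_0)/x$, i.e., the elements $\gamma_i - \gamma_0$. Hence $V_F - \gamma_0 = \{0\} \cup \{(\gamma_i - \gamma_0)\} = \mathcal{W}^v$, giving $V_F = \mathcal{W}^v + \gamma_0$.

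It remains to arrange that $1 \in \mathcal{U}$ and to produce the scalar $a$. Pick any nonzero $w \in \mathcal{W}$ and set $\mathcal{U} = w^{-1}\mathcal{W}$; this is again an $\F_{p^k}$-subspace (scalar multiplication by $w^{-1}$ is $\F_{p^k}$-linear and bijective on $\F_q$) containing $1$, and $\mathcal{W}^v = w^v \cdot \mathcal{U}^v$. Taking $a = w^v$ and $b = \gamma_0$ yields $V_F = a \cdot \mathcal{U}^v + b$ with $a \neq 0$, as required; the conditions $\F_{p^k} \subseteq \F_q$ and $v \mid p^k - 1$ are exactly conditions (i)--(iii) from Theorem~\ref{thm:main1} and come for free from Theorem~\ref{thm:mills}(i). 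The main obstacle I anticipate is the bookkeeping verifying that the substitution $x = y^v$ genuinely sets up a bijection (up to the $v$-to-$1$ collapse on $\F_q^*$) between roots of the linearized polynomial $P$ and the translated values $\gamma_i - \gamma_0$, together with checking separability of $P$ so that the dimension count $\dim_{\F_{p^k}} \mathcal{W} = m$ is correct; both hinge on carefully interpreting part~(iv) of Mills' theorem and using $vr + 1 = p^{mk}$ to match cardinalities ($r$ nonzero values versus $p^{mk} - 1$ nonzero roots grouped into classes of size $v$).
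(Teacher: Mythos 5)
Your derivation of the linearized structure is sound and matches the first half of the paper's argument: substituting $x=y^v$ into Theorem~\ref{thm:mills}(iv) produces the $p^k$-linearized polynomial $P(y)=\sum_{i=0}^m w_i y^{p^{ki}}$, which is separable because $P'=w_0\neq 0$, its root set is an $\F_{p^k}$-vector space of dimension $m$, the $v$-th powers of its nonzero roots are exactly the elements $\gamma_i-\gamma_0$, and the normalization $\mathcal U=w^{-1}\mathcal W$, $a=w^v$, $b=\gamma_0$ is precisely what the paper does. But there is a genuine gap: the lemma asserts $\mathcal U\subseteq \F_q$, and your proof never establishes this. The roots of $P$ live a priori only in $\overline{\F}_q$ --- indeed your own dimension count requires all $p^{mk}$ distinct roots, so $\mathcal W$ must be taken inside the algebraic closure (note also that Lemma~\ref{lem:linear}(ii), which you cite, only concerns the map $\F_q\to\F_q$). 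Knowing that the $v$-th powers $\mathcal W^v=V_F-\gamma_0$ lie in $\F_q$ does \emph{not} imply that $\mathcal W$, or the normalized $\mathcal U=w^{-1}\mathcal W$, lies in $\F_q$: in general $y^v\in\F_q$ does not force $y\in\F_q$ (think of square roots of non-squares when $v=2$).

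This containment is not a formality; it is exactly what makes Theorem~\ref{thm:main1} and the later results (e.g., Lemma~\ref{lem:gamma}, Theorem~\ref{pre-conj}) usable, and the paper spends the entire second half of its proof on it. The argument there is: since $1\in\mathcal U$ we get $\F_{p^k}\subseteq\mathcal U$, so it suffices to treat $y\in\mathcal U\setminus\F_{p^k}$; then $y^v\in\F_q$ and $v\mid p^k-1$ give $y^{p^k-1}\in\F_q$, hence $y^{p^k}=c_0y$ with $c_0\in\F_q\setminus\{0,1\}$; applying the same reasoning to $y+1\in\mathcal U\setminus\F_{p^k}$ gives $(y+1)^{p^k}=c_1(y+1)$ with $c_1\in\F_q\setminus\{0,1\}$, and comparing the two relations via $(y+1)^{p^k}=y^{p^k}+1$ yields $y=\frac{c_1-1}{c_0-c_1}\in\F_q$. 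Note that this step genuinely uses the normalization $1\in\mathcal U$ (to have $y+1\in\mathcal U$) together with $\mathcal U^v\subseteq\F_q$, so it cannot be waved off as bookkeeping; without it you have only proved the lemma with $\mathcal U\subseteq\overline{\F}_q$, which is a strictly weaker statement.
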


 \begin{proof} 
 We use Theorem~\ref{thm:mills} and its notation. From assertion (iv), we have the polynomial $T(x^v+\gamma_0)/x^{v-1}=\sum_{i=0}^{m}w_ix^{p^{ki}}$, whose set of roots is an $\F_{p^k}$-vector space $\mathcal U_0\subseteq \overline{\F}_q$, and $V_F=\mathcal U_0^v+\gamma_0$. Now fix $u\in \mathcal U_0\backslash \{0\}$, and set $\mathcal U=u^{-1}\cdot \mathcal U_0$. This gives $1\in \mathcal U$ and $V_F=a\cdot \mathcal U^v+\gamma_0$, with $a=u^{v}\in \F_q$. It remains to prove that $\mathcal U\subseteq \F_q$.

First, note that $1\in \mathcal U$ implies $\F_{p^k}\subseteq \mathcal U$, and since $\F_{p^k}\subseteq \F_q$, we only need to prove that $\mathcal U\setminus \F_{p^k}\subseteq \F_q$. Observe that the conditions $\mathcal U^v \subseteq \F_q$ and $v\mid (p^k-1)$ give
$$y\in  \mathcal U\setminus \F_{p^k}\Longrightarrow y^v\in \F_q \Longrightarrow y^{p^k-1}\in \F_q   \Longrightarrow   y^{p^k}=c_0y$$
 for some $c_0\in \F_q\setminus \{0, 1\}$, once $y\not\in \F_{p^k}$. The condition  $1\in \mathcal U$ also gives  $y+1\in \mathcal U\setminus \F_{p^k}$, and the same argument implies $(y+1)^{p^k}=c_1(y+1)$ for some $c_1\in \F_q\setminus \{0, 1\}$. Therefore,
$$c_1(y+1)=(y+1)^{p^k}=y^{p^k}+1=c_0y+1,$$
which gives $c_1-1=(c_0-c_1)y$. As $c_1\ne 1$, we obtain $c_0\ne c_1$,  and then $y=\frac{c_1-1}{c_0-c_1}\in \F_q$.
\end{proof}

The following lemma shows that the conditions on $S$ stated in Theorem \ref{thm:main1} are also sufficient.

\begin{lemma}\label{lem:aux}
Let $\F_{p^k}$ be a subfield of $ \F_q$, $v$ a divisor of $p^k-1$, and $\mathcal U\subseteq \F_q$ an $\F_{p^k}$-vector space with $\#\mathcal U^v>2$. Then the following hold.
\begin{enumerate}[\rm(i)]
\item $F\in\mathcal P(\mathcal U, q)$ if and only if $F^v\in\mathcal P(\mathcal U^v, q)$.
\item If $S=a\cdot\mathcal U^{v}+b$, where $a, b\in \F_q$ and $a\neq 0$, then $\mathcal P(S, q) \neq \emptyset$.
\end{enumerate}
\end{lemma}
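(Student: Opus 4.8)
The plan is to establish (i) first and then deduce (ii) from (i) together with the affine identity recorded just before the lemma and an explicit construction. Throughout I set $m=\dim_{\F_{p^k}}\mathcal U$, so that $\#\mathcal U=p^{mk}$ and, by Remark~\ref{rem2.3}, $\#\mathcal U^v=\tfrac{p^{mk}-1}{v}+1$; write $r'=\tfrac{p^{mk}-1}{v}$, an integer since $v\mid p^k-1\mid p^{mk}-1$. The only external input for (i) is the elementary bound $\#V_G\ge\lfloor(q-1)/\deg G\rfloor+1$ recalled in the Introduction, with equality being the MVSP property. Since $V_{F^v}=\{u^v:u\in V_F\}$ always holds, $V_F=\mathcal U$ immediately forces $V_{F^v}=\mathcal U^v$.

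For the forward implication of (i), suppose $F\in\pp(\mathcal U,q)$ and put $D=\deg F$. Minimality of $F$ reads $\lfloor(q-1)/D\rfloor=p^{mk}-1$, i.e. $p^{mk}-1\le(q-1)/D<p^{mk}$. Dividing by $v$ and using $p^{mk}/v=r'+1/v\le r'+1$, I get $r'\le(q-1)/(vD)<r'+1$, hence $\lfloor(q-1)/(vD)\rfloor=r'$. As $\deg(F^v)=vD$ and $\#V_{F^v}=r'+1$, this is exactly the minimality condition for $F^v$, so $F^v\in\pp(\mathcal U^v,q)$.

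For the converse, suppose $F^v\in\pp(\mathcal U^v,q)$. Every value satisfies $F(\alpha)^v\in\mathcal U^v$, so $V_F\subseteq\phi^{-1}(\mathcal U^v)$ where $\phi(x)=x^v$. Because $v\mid q-1$, each nonzero element of $\mathcal U^v$ has exactly $v$ preimages under $\phi$, whence $\#\phi^{-1}(\mathcal U^v)=vr'+1=p^{mk}=\#\mathcal U$; since $\mathcal U\subseteq\phi^{-1}(\mathcal U^v)$, this forces $\phi^{-1}(\mathcal U^v)=\mathcal U$ and so $\#V_F\le p^{mk}$. On the other hand, minimality of $F^v$ gives $(q-1)/D\ge vr'=p^{mk}-1$, hence $\#V_F\ge\lfloor(q-1)/D\rfloor+1\ge p^{mk}$. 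The two bounds collapse to $\#V_F=p^{mk}$, which yields both $V_F=\mathcal U$ and the equality defining minimality, so $F\in\pp(\mathcal U,q)$.

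Finally, for (ii) the identity $\pp(a\cdot S+b,q)=\{af+b:f\in\pp(S,q)\}$ reduces the claim to $\pp(\mathcal U^v,q)\ne\emptyset$, and part (i) reduces it further to exhibiting a single MVSP with value set $\mathcal U$. Let $d$ be least with $\mathcal U\subseteq\F_{p^{dk}}$ (so $\F_{p^{dk}}\subseteq\F_q$), put $A(x)=\prod_{u\in\mathcal U}(x-u)$, which is $p^k$-linearized by Lemma~\ref{lem:linear}(i) and divides $x^{p^{dk}}-x$, and take the $p^k$-linearized $M$ from Lemma~\ref{lem:linear}(iii) with $M(A(x))=A(M(x))=x^{p^{dk}}-x$. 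I would then set $F=M(\Tr_{\F_q/\F_{p^{dk}}}(x))$; the relative trace is surjective onto $\F_{p^{dk}}$, so $V_F=M(\F_{p^{dk}})$, and a floor computation with $\deg F=p^{n-mk}$ confirms minimality once the value set is identified. The step I expect to be the main obstacle is showing $M(\F_{p^{dk}})=\mathcal U$: the inclusion $M(\F_{p^{dk}})\subseteq\mathcal U$ follows from $A(M(y))=y^{p^{dk}}-y=0$ on $\F_{p^{dk}}$, while equality requires that all roots of $M$ lie in $\F_{p^{dk}}$ (if $M(\alpha)=0$ then $\alpha^{p^{dk}}-\alpha=A(M(\alpha))=0$) and that $M$ is separable, as its linear coefficient is nonzero, so that a rank--nullity count over $\F_{p^k}$ gives $\#M(\F_{p^{dk}})=p^{dk}/\deg M=p^{mk}=\#\mathcal U$. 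Alternatively, after rescaling by some nonzero $u_0\in\mathcal U$ so that $1\in u_0^{-1}\mathcal U$, one may invoke Theorem~\ref{thm:main2} with the trace as the required element of $\pp(\F_{p^{dk}},q)$.
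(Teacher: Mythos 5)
Your proof is correct, but it takes a genuinely different route from the paper's in both parts. For (i), the paper argues structurally: writing $T(x)=\prod_{u\in\mathcal U}(x-u)=\sum_i a_ix^{p^{ki}}$ and $T^*(x)=\prod_{w\in\mathcal U^v}(x-w)=\sum_i a_ix^{(p^{ki}-1)/v+1}$, it establishes the identity $T^*(F^v)=F^{v-1}\,T(F)$ and then applies the functional-equation characterization of MVSPs (Theorem~\ref{MillsBor}) to $F$ and $F^v$ simultaneously. You instead run a purely counting argument: floor arithmetic on degrees, plus the observation that the full preimage of $\mathcal U^v$ under $x\mapsto x^v$ in $\F_q$ has $1+vr'=p^{mk}$ elements and hence equals $\mathcal U$. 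Your version is more elementary (it never touches Theorem~\ref{MillsBor}) and in fact does not need the hypothesis $\#\mathcal U^v>2$, which the paper requires in order to invoke that theorem; the paper's version is shorter given the machinery and produces the identity $T^*(F^v)=F^{v-1}T(F)$, which encodes how the functional equations for $F$ and $F^v$ transform into one another. For (ii), both proofs reduce via the affine identity and part (i) to exhibiting one element of $\pp(\mathcal U,q)$, but the paper does this in three lines: take the $p^k$-linearized $M$ with $T(M(x))=x^q-x$, differentiate to get $M'(x)=c\in\F_q^*$, and read off the equation $T(M)=c^{-1}(x^q-x)M'$, so that Theorem~\ref{MillsBor} gives $M\in\pp(\mathcal U,q)$ at once. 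Your construction $M_A(\Tr_{\F_q/\F_{p^{dk}}}(x))$ with $d$ minimal is sound---the rank--nullity identification of the value set and the degree count $p^{n-mk}$ both check out---but it is longer and rests on two facts you assert rather than prove: that $dk\mid n$ (true; it follows from $\mathcal U\subseteq\F_{p^{dk}}\cap\F_{p^n}=\F_{p^{k\gcd(d,n/k)}}$ and the minimality of $d$, exactly as in the paper's proof of Theorem~\ref{thm:main2}) and that the linear coefficient of $M_A$ is nonzero (true; differentiate $M_A(A(x))=x^{p^{dk}}-x$ to get $M_A'\cdot A'=-1$). Note also that taking $d=n/k$ from the outset, i.e.\ working with $x^q-x$ itself so that the relative trace becomes the identity, collapses your construction to the paper's polynomial $M$, verified by counting rather than by Theorem~\ref{MillsBor}; this would remove the need for the divisibility fact entirely.
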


\begin{proof}
By Lemma~\ref{lem:linear},
\begin{equation}\label{polyT}
T(x)=\prod_{u\in \mathcal U}(x-u)=\sum_{i=0}^ma_ix^{p^{ki}} \in \F_q[x],
\end{equation}
where $m=\dim \mathcal U$,   $a_m=1$, and  $a_0\neq 0$. 
Hence $$T^*(x)=\prod_{w\in\mathcal U^v}(x-w)=\sum_{i=0}^ma_ix^{\frac{p^{ki}-1}{v}+1},$$
and we have the following relation:
$$T^*(F^v)=\sum_{i=0}^ma_i(F^v)^{\frac{p^{ki}-1}{v}+1}=F^{v-1}\cdot T(F)$$
for all $F \in \F_q[x]$. Applying Theorem \ref{MillsBor} to $F$ and $F^v$, we conclude the proof of assertion (i).

To prove (ii), we observe that $F\in\F_q[x]$ is an MVSP with value set $a\cdot\mathcal{U}^v+b$ if and only if $a^{-1}\cdot(F-b)$ is an MVSP with value set $\mathcal{U}^v$. Thus we may assume  that $S=\mathcal{U}^v$, and from assertion (i), it suffices to show that $\mathcal{P}(\mathcal U,q)\neq\emptyset$.

To this end, we consider the polynomial $T$  in Eq.~\eqref{polyT}. Since $T$ is a $p^k$-linearized polynomial that divides $x^q-x$, it follows from Lemma~\ref{lem:linear} that there exists another $p^k$-linearized polynomial $M\in\F_q[x]$ such that $T(M(x))=x^q-x$. We differentiate this equation to obtain $T'(M(x))\cdot M'(x)=-1$, and hence $M'(x)=c\in\F_q^*$. Consequently, we have $T(M(x))=c^{-1}(x^q-x)M'(x)$, and,  by Theorem~\ref{MillsBor}, we conclude that $M\in\F_q[x]$ is an MVSP with value set $\mathcal{U}$.

\end{proof}

\section{MVSPs whose value set is an affine subspace}

In this section, we continue working with  $p^k$-linearized polynomials. But for convenience, $p^k$ will be denoted by $q$, and our field extension will be $\F_{q^n}$.

\begin{definition}
Let $f \in \F_{q^n}[x]$ be a polynomial. We define $\m(f)$ to be the set of non-negative integers $i$ such that a monomial $ax^i$ appears in $f$. We also define $\m(f)_0$ to be the subset of $\m(f)$ consisting of all multiples of $p$  and $\m(f)_1$ to be the complement of $\m(f)_0$ in $\m(f)$.
\end{definition}

Let $A(x)=\sum_{i=0}^ka_ix^{q^{i}}$, with $a_k=1$, be a $q$-linearized polynomial  that divides $x^{q^n}-x$. In particular,  $A(x)$ is separable. We set 
\begin{equation}\label{MVSP-A}
\mathcal W(A|\F_{q^n})=\{F\in \F_{q^n}[x]\,|\, A(F(x))=-a_0(x^{q^n}-x)F'(x)\}.
\end{equation}

Note that  $a_0=A'(x)\neq0$,  and from Theorem \ref{MillsBor}, the nonconstant polynomials $F \in \mathcal W(A|\F_{q^n}) $ are MVSPs whose value set is comprised by the roots of $A$.

A first step  is the description of the set $\mathcal {W}(A|\F_{q^n})$. For this, we begin with the following result, which provides relevant information on  the monomials of $F \in \mathcal W(A|\F_{q^n})$.

\begin{lemma}\label{lem:main}
Suppose  $A(x)=\sum_{i=0}^ka_ix^{q^{i}}$ is not a binomial, and let  $F\in \mathcal W(A|\F_{q^n})$ be a nonconstant polynomial such that $F(0)=0$. Then there exist integers $t\ge 1$ and $d_1, \ldots, d_t$ such that $\m(F)_1=\{e_1, \ldots, e_t\}$, where $e_i=d_iq^k+1$. Furthermore, the following hold.

\begin{enumerate}[\rm(i)]
\item The elements of $\m(F)_0$ are all of the form $e_iq^l$ for some $i,l\ge 1$.
\item There exist   $D_1,\ldots,D_t\geq 1$  such that $$\m(A(F(x))) =\{e_1, \ldots, e_t, e_1q^{D_1}, \ldots, e_tq^{D_t}\}.$$
\end{enumerate}
\end{lemma}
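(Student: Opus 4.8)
The plan is to read both sides of the defining relation
$$A(F)=-a_0(x^{q^n}-x)F'=a_0xF'-a_0x^{q^n}F'$$
at the level of exponent sets, using that $F^{q^i}$ has exponent set $q^i\m(F)$ (Frobenius scaling) while $\m(F')=\m(F)_1-1$ (differentiation kills the $p$-power exponents). First I would record two preliminary facts. Since $F$ is nonconstant, $A(F)\neq 0$ (otherwise $F$ would be a constant root of $A$), so $F'\neq 0$ and $\m(F)_1\neq\emptyset$, giving $t:=\#\m(F)_1\geq 1$. Writing $M=\max\m(F)_1$, the leading term of $F'$ is a nonzero multiple of $x^{M-1}$ because $p\nmid M$, so comparing leading terms of the two sides yields the degree relation $q^k\deg F=q^n+M-1$; in particular $\deg F<q^n$, and since $F(0)=0$ gives $0\notin\m(F)$, every exponent of $F$ lies in $[1,q^n)$. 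As $\deg(xF')\le\deg F<q^n$, the right-hand side splits into a low block $\m(xF')=\m(F)_1$ and a high block $\m(x^{q^n}F')=q^n-1+\m(F)_1$ (translate by $q^n-1$), and these are disjoint, so
$$\m(A(F))=\m(F)_1\ \sqcup\ (q^n-1+\m(F)_1).$$

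Next I would prove the congruence $e\equiv 1\pmod{q^k}$ for each $e\in\m(F)_1$, which gives $e_i=d_iq^k+1$ with $d_i\ge 0$. On the left-hand side $A(F)=\sum_{i=0}^k a_iF^{q^i}$, the term $a_iF^{q^i}$ has largest exponent $q^i\deg F$, and the degree relation forces $q^{k-1}\deg F<q^n$. Hence for $i<k$ all exponents of $a_iF^{q^i}$ are $<q^n$, so every exponent of $A(F)$ that is $\ge q^n$ comes from the top term $a_kF^{q^k}=F^{q^k}$ and therefore lies in $q^k\m(F)$. Comparing with the high block, $q^n-1+e\in q^k\m(F)$ for each $e\in\m(F)_1$; since $k\mid n$ we have $q^k\mid q^n$, whence $e\equiv 1\pmod{q^k}$. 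Identifying the low block then gives $\m(F)_1=\{e_1,\dots,e_t\}$ with $e_i=d_iq^k+1$.

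For part (i) I would run a descent on $\m(F)_0$. Fix $f\in\m(F)_0$; then $f$ is a multiple of $p$ with $f<q^n$, so by the block description $f\notin\m(A(F))$, i.e.\ the coefficient of $x^f$ in $A(F)$ vanishes. The $i=0$ contribution $a_0\cdot[x^f]F$ is nonzero (as $a_0\neq 0$ and $f\in\m(F)$), so it must be cancelled by a higher term: there is $i\ge 1$ with $a_i\neq 0$ and $[x^f]F^{q^i}\neq 0$, forcing $f=q^if'$ for some $f'\in\m(F)$ with $f'<f$. If $f'\in\m(F)_1$ we stop; otherwise $f'\in\m(F)_0$ is again $<q^n$ and we repeat. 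Since the exponents strictly decrease and stay positive, the process terminates at an element of $\m(F)_1$, yielding $f=e_jq^l$ with $l\ge 1$.

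Finally, for part (ii) I would revisit the high block $q^n-1+\m(F)_1=\{q^n-1+e_i\}$: by the localization above each such exponent equals $q^kf_i'$ with $f_i'\in\m(F)$, and substituting the shape from part (i) (or $f_i'\in\m(F)_1$ directly) gives $q^n-1+e_i=e_{\sigma(i)}q^{D_i}$ with $D_i\ge k\ge 1$. To obtain the precise list I would check $\sigma$ is a bijection: two distinct high-block elements of the form $e_jq^{D},e_jq^{D'}$ would differ by a factor $\ge q$, which is impossible since all high-block elements lie in an interval of length $<\deg F<q^n$. Thus $\m(A(F))=\{e_1,\dots,e_t\}\cup\{e_1q^{D_1},\dots,e_tq^{D_t}\}$. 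The hard part will be the degree bookkeeping that confines the high block to the single top term $F^{q^k}$, since this is exactly what upgrades a crude mod-$p$ congruence to the sharp mod-$q^k$ statement and drives both (i) and (ii); the second delicate point is controlling the cancellations in the descent, and I expect the hypothesis that $A$ is not a binomial to enter here, guaranteeing an intermediate index $0<j<k$ with $a_j\neq 0$ so that the exponent structure is as stated rather than collapsing to the degenerate pure $q^k$-scaling.
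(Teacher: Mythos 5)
Your proof is correct and is essentially the paper's own argument: the block decomposition $\m(A(F))=\m(F)_1\sqcup\left(q^n-1+\m(F)_1\right)$, the degree bound giving $q^{i}\deg F<q^n$ for $i<k$ so that every exponent $\geq q^n$ of $A(F)$ must lie in $q^k\m(F)$ (whence $e\equiv 1\pmod{q^k}$), and the cancellation descent for part (i) all match the paper's proof step by step. The only departures are minor: in (ii) you identify each high-block element $q^n-1+e_i$ as $e_{\sigma(i)}q^{D_i}$ via part (i) and show $\sigma$ is a bijection by an interval-length argument, whereas the paper takes $h$ maximal with $e_iq^h\in\m(F)$, observes that $e_iq^{h+k}$ cannot cancel, and counts the $t$ elements of $\m(A(F))_0$ (both work); and your closing guess about the non-binomial hypothesis is off the mark---neither your argument nor the paper's ever uses it (the descent is perfectly fine when the only available index is $i=k$), the hypothesis being present only because the binomial case of Proposition~\ref{prop:affine} is disposed of separately and trivially.
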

\begin{proof}
From  \eqref{MVSP-A} we have 
\begin{equation}\label{eq:dif}
A(F(x))=-a_0(x^{q^n-1}-1)xF'(x).
\end{equation}
Since  $F$ is not a constant, Eq.~ \eqref{eq:dif}  yields $F'\neq 0$ and $a_0\neq0$. The former implies that  $\m(F)_1\neq \emptyset$, i.e., $\m(F)_1=\{e_1, \ldots, e_t\}$ for some $t\geq 1$.  Moreover, since $A(x)=\sum_{i=0}^ka_ix^{q^{i}}$ and $a_0\neq0$, we have   that $\m(A(F(x)))_1=\{e_1, \ldots, e_t\}$. Equation ~ \eqref{eq:dif}  clearly gives
 \begin{equation}\label{eq:set}\m(A(F(x)))=\m(-a_0(x^{q^n-1}-1)xF'(x))=\bigcup_{i=1}^t \{e_i, e_i+q^{n}-1\}.\end{equation}
 In particular, 
\begin{equation}\label{eq:set0}   \m(A(F(x)))_0=\{e_1+q^n-1, \ldots, e_t+q^n-1\}.  \end{equation}
Note that a degree comparison in \eqref{eq:dif} gives $\deg(F)\le \frac{q^n-1}{q^k-1}<q^{n-k+1}$, and then $\deg(F(x)^{q^j})<q^{n+j-(k-1)}<q^n$ for all $j=0, \ldots, k-1$. Thus   Eqs.~ \eqref{eq:dif}  and \eqref{eq:set0} entail $ \m(A(F(x)))_0\subseteq  \m (F(x)^{q^k})$, i.e., 
 for each $ e_i\in \m(F)_1 $, there exists $m_i \in \m(F)$ such that
$$e_i-1+q^n=q^k\cdot m_i.$$
In particular, $m_i\ge q^{n-k}$, and  thus  $m_i=q^{n-k}+d_i$ for some integer $d_i\geq0$.
This implies that $e_i=d_iq^k+1$ for $i=1, \ldots, t$. 

We now proceed to prove the additional assertions.
\begin{enumerate}[\rm(i)]
\item Let  $s\in \m(F)_0$. Since $s\ne e_i$ and  $s<q^n$, Eq.~\eqref{eq:set} implies $s\not\in \m(A(F(x)))$. Since $F(0)=0$, we have that $s\ne 0$, and thus the former implies that $s=q^i\cdot s_0$ for some $s_0\in \m(F)$ and some $i\ge 1$. If $s_0\in \m(F)_1$, then our assertion follows. Otherwise, we employ the same reasoning  for $s_0\in \m(F)_0$, and eventually arrive at  $s=\tilde{s}q^N$ for some  $\tilde{s}\in \m(F)_1$ and $N\ge 1$.
\item Fix $i=1,\ldots, t$, and let $h\ge 0$ be the largest integer such that $e_iq^h\in \m(F)$. In particular, the composition $A(F(x))$ produces a monomial of degree $e_iq^{h+k}$, and every other monomial produced by this composition has the form $e_jq^r$, with $j\ne i$ or $r<h+k$. In particular, monomials of degree $e_iq^{h+k}$ cannot be canceled in $A(F(x))$, and then  $e_iq^{h+k}\in \m(A(F(x))_0$. Since $\m(A(F(x))_0$ has exactly $t$ elements and this is the number of $e_i$'s, the assertion follows.
\end{enumerate}
\end{proof}

We fix the following notations that will be frequently employed.  
\begin{enumerate}
\item $A(x)=\sum_{i=0}^ka_ix^{q^{i}}$ is a monic  $q$-linearized polynomial  of degree $q^k$ that divides $x^{q^n}-x$. In particular, $A(x)$ is  separable, that is, $A'(x)=a_0\neq0$.
\item $d_A$ is the least positive integer such that $A(M(x))=ax^{q^{d_A}}+bx$ for some $a, b\in \F_{q^n}^*$ and some   $q$-linearized polynomial $M\in \F_{q^n}[x]$ (from item (iii) in Lemma~\ref{lem:linear} such integer always exists since we can use the binomial $x^{q^n}-x$).
\item $M_A(x), \alpha_A$ are unique such that $A(M_A(x))=x^{q^{d_A}}-\alpha_{A}x$.

\end{enumerate}

We obtain the following result.

\begin{proposition}\label{prop:affine}
If $A(x), d_A, M_A(x), \alpha_A$ are as above, then
\begin{equation}\label{propW}
\mathcal W(A|\F_{q^n})=\{M_A(G(x))\,|\, G\in \mathcal W(x^{q^{d_A}}-\alpha_Ax|\F_{q^n})\}.
\end{equation}
\end{proposition}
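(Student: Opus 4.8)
The plan is to prove the identity in \eqref{propW} by establishing the two inclusions separately, using the defining functional equation \eqref{MVSP-A} together with the key relation $A(M_A(x))=x^{q^{d_A}}-\alpha_A x$.

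First I would set up notation and reduce to polynomials vanishing at $0$. Since $A$ is $q$-linearized with $A(0)=0$ and $M_A$ is $q$-linearized, both sides of \eqref{propW} are stable under adding constants from the root set of $A$ (the value set), so it suffices to treat the normalized case $F(0)=0$ and $G(0)=0$; alternatively one checks that composition with $M_A$ interacts correctly with the constant terms. The engine of the whole argument is the substitution principle: if $G\in\mathcal W(x^{q^{d_A}}-\alpha_A x\mid\F_{q^n})$, then by definition $(M_A(G))^{q^{d_A}}-\alpha_A\, M_A(G)=-(-\alpha_A)(x^{q^n}-x)G'$, and applying $A$ to $M_A(G)$ and using $A(M_A(y))=y^{q^{d_A}}-\alpha_A y$ turns $A(M_A(G(x)))$ into $G(x)^{q^{d_A}}-\alpha_A G(x)$. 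The chain rule gives $(M_A(G))'=M_A'(G)\cdot G'$, and since $M_A$ is $q$-linearized its derivative $M_A'$ is a nonzero constant; differentiating $A(M_A(x))=x^{q^{d_A}}-\alpha_A x$ and evaluating shows $a_0\, M_A'=-\alpha_A$, i.e. $a_0\cdot(M_A(G))' = -\alpha_A\, G'$. Putting these together should convert the equation witnessing $G\in\mathcal W(x^{q^{d_A}}-\alpha_A x\mid\F_{q^n})$ exactly into the equation witnessing $M_A(G)\in\mathcal W(A\mid\F_{q^n})$. This gives the inclusion $\supseteq$ by a direct, essentially bookkeeping computation.

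For the reverse inclusion $\subseteq$, given $F\in\mathcal W(A\mid\F_{q^n})$ I would need to produce a $G$ with $F=M_A(G)$ lying in $\mathcal W(x^{q^{d_A}}-\alpha_A x\mid\F_{q^n})$. The natural candidate is to write $A(F)=-a_0(x^{q^n}-x)F'$ and compose on the left with the appropriate $q$-linearized partner: by Lemma~\ref{lem:linear}(iii), since $A\mid (x^{q^{d_A}}-\alpha_A x)$ (via $M_A$), one has a companion $q$-linearized polynomial $N_A$ with $M_A(A(x))=x^{q^{d_A}}-\alpha_A x$ as well (the ``moreover'' clause of Lemma~\ref{lem:linear}(iii) guarantees $A$ and $M_A$ commute). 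Setting $G(x):=M_A(F(x))$ and running the computation of the previous paragraph in reverse — applying $A$, using $A(M_A(F))=F^{q^{d_A}}-\alpha_A F$, and matching derivatives — should show $G$ satisfies the defining equation for $\mathcal W(x^{q^{d_A}}-\alpha_A x\mid\F_{q^n})$, after which $M_A(G)=M_A(M_A(F))$ and one uses the commuting/composition identities to recover $F$. The crucial algebraic fact to pin down here is the precise relation between $M_A$ viewed as the left partner and as the right partner of $A$, which is exactly the content of Lemma~\ref{lem:linear}(iii).

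The step I expect to be the main obstacle is the reverse inclusion, specifically verifying that $G=M_A(F)$ genuinely lands in the smaller set and that $M_A(G)$ reconstructs $F$ rather than some other preimage. The difficulty is one of uniqueness and invertibility of the linearized substitution: composing with $M_A$ is not injective on all of $\F_{q^n}[x]$, so I must argue that the map $G\mapsto M_A(G)$ restricts to a bijection between $\mathcal W(x^{q^{d_A}}-\alpha_A x\mid\F_{q^n})$ and $\mathcal W(A\mid\F_{q^n})$. The cleanest route is probably to avoid explicit inversion and instead note that the $\supseteq$ inclusion already exhibits every $M_A(G)$ as an element of $\mathcal W(A\mid\F_{q^n})$, then prove the map $G\mapsto M_A(G)$ is injective on the relevant set (using that $M_A$ has a nonzero constant derivative and that two polynomials with equal $M_A$-composition differ by a root of the $q$-linearized kernel, which is controlled by the value-set structure from Theorem~\ref{thm:mills}), and finally count or match degrees via $A(F)=-a_0(x^{q^n}-x)F'$ to force surjectivity. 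Alternatively, a direct derivative-chasing argument establishing $G\in\mathcal W(\cdots)$ and then $A(M_A(G))=A(F)$ combined with separability of $A$ to cancel should close the equality; I would favor whichever keeps the linearized-composition bookkeeping most transparent.
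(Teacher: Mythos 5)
Your forward inclusion ($\supseteq$) is correct and is exactly what the paper means by calling that containment straightforward: the chain rule, the constancy of $M_A'$, and the relation $a_0M_A'=-\alpha_A$ (obtained by differentiating $A(M_A(x))=x^{q^{d_A}}-\alpha_Ax$) convert the defining equation \eqref{MVSP-A} for $G$ into the one for $M_A(G)$. The reverse inclusion, however, carries all the content of the proposition, and your plan for it has a genuine gap. Your candidate $G:=M_A(F)$ cannot work: when $A$ is not itself a binomial (the only nontrivial case, since otherwise $M_A(x)=x$), one has $\deg M_A=q^{d_A-k}>1$, so $M_A(G)=M_A(M_A(F))$ has degree $q^{2(d_A-k)}\deg F$ and can never reconstruct $F$. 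What is needed is a \emph{right factor}, i.e.\ the existence of some $T$ with $F=M_A(T)$, and that existence is precisely the nontrivial assertion; no amount of ``running the computation in reverse'' starting from $M_A(F)$ produces it. Two further steps you lean on are also unsound. The commutation $M_A(A(x))=A(M_A(x))$ is not given by Lemma~\ref{lem:linear}(iii): its ``moreover'' clause applies only when the composite is $x^{q^{kd}}-x$, whereas here it is $x^{q^{d_A}}-\alpha_Ax$ with $\alpha_A$ possibly different from $1$ (composition of linearized polynomials is a non-commutative twisted polynomial ring, so this restriction matters; the paper invokes commutation only in the proof of Theorem~\ref{thm:main2}, where $1\in\mathcal U$ forces $\alpha_A=1$). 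And even granting commutation, applying $M_A$ to both sides of $A(F)=-a_0(x^{q^n}-x)F'$ leads nowhere, because $M_A$ is additive but does not distribute over the product $(x^{q^n}-x)F'$. Finally, your fallback --- injectivity of $G\mapsto M_A(G)$ plus ``counting or matching degrees'' for surjectivity --- has no mechanism behind it: surjectivity onto $\mathcal W(A|\F_{q^n})$ \emph{is} the proposition, and nothing in your sketch computes or compares the cardinalities of the two (finite) sets.

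For comparison, the paper's proof of $\subseteq$ constructs the factorization directly from the monomial structure of $F$. After normalizing $F(0)=0$ (your reduction here is fine) and disposing of the binomial case, Lemma~\ref{lem:main} shows that $F=\sum_{i=1}^t M_i(x^{e_i})$ with each $M_i$ $q$-linearized, and forces each $A(M_i(x^{e_i}))$ to be a binomial $a_ix^{e_i}+b_ix^{e_iq^{D_i}}$ with $D_i\ge d_A$; then Lemma~\ref{novo} combined with the minimality of $d_A$ yields $M_i(\varepsilon_ix)=M_A(T_i(x))$, whence $F=M_A(T)$. Only after that factorization is in hand does the easy derivative computation (the same one you used for $\supseteq$) show $T\in\mathcal W(x^{q^{d_A}}-\alpha_Ax|\F_{q^n})$. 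Any correct proof must contain something playing the role of this factorization step, and that is what your proposal is missing.
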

\begin{proof}
Note that the inclusion $ \supseteq $ in \eqref{propW} is straightforward. For the other inclusion, take $F\in \mathcal W(A|\F_{q^n})$ with $F$ not zero (the case $F(x)\equiv 0$ is trivial). Hence $A(F(0))=0$ and, since $A$ is $q$-linearized, we have that 
\begin{align*}
A(F(x)- F(0))&= A(F(x))- A(F(0)) \\
&=A(F(x))\\&=-a_0(x^q-x)F'\\
&=-a_0(x^q-x)(F- F(0))'.
\end{align*}
Therefore, $F(x)- F(0)\in \mathcal W(A|\F_{q^n})$. In particular, we can assume that $F(0)=0$. If $A$ is binomial, it follows by definition that $A(x)=x^{q^{d_A}}-\alpha_Ax, M_A(x)=x$ and thus the result is trivial. 

Assume now that $A$ is not a binomial.  Form assertion (i) of  Lemma~\ref{lem:main}, it follows that
\begin{equation}\label{qqcoisa}
F(x)=\sum_{i=1}^tM_i(x^{e_{i}}),
\end{equation}
where each $M_i\in \F_{q^n}[x]$ is a $q$-linearized polynomial.  From Eq.~\eqref{qqcoisa}, we have
$$A(F(x))=A\left (\sum_{i=1}^tM_i(x^{e_{i}}) \right)=\sum_{i=1}^tA\left(M_i(x^{e_{i}})\right).$$
For each $i=1, \dots, t$, set $C_i=\m(A(M_i(x^{e_i})))  \subseteq \{e_iq^s\, : s\geq 0\}$. Since $\gcd(e_i,q)=1$, it follows that the sets  $C_i$ are pairwise disjoint.
The latter combined with assertion (ii) of Lemma~\ref{lem:main} imply that 
$$A(M_i(x^{e_{i}}))=a_ix^{e_{i}}+b_ix^{e_{i}q^{D_i}},$$
for some $a_i, b_i\in \F_{q^n}^*$, and some $D_i\geq 1$.  Hence $A(M_i(\varepsilon_ix))=x^{q^{D_i}}+\delta_ix$ for $\varepsilon_i=b_i^{-q^{n-D_i}}$ and some $\delta_i\in \F_{q^n}^*$. By the minimality of $d_A$, we have that $D_i\ge d_A$. By Lemma \eqref{novo}, there exists an integer $R_i$, with  $0\le R_i<d_A$, an element $\Delta_i\in \F_{q^n}^*$,  and a $q$-linearized polynomial $T_i\in \F_{q^n}[x]$ such that 
$$x^{q^{D_i}}=T_i(x)^{q^{d_A}}-\alpha_A T_i(x)+\Delta_i x^{q^{R_i}}=A(M_A(T_i(x)))+\Delta_i x^{q^{R_i}}.$$
Hence
$$S_i(x):=A\left(M_i(\varepsilon_ix)-M_A(T_i(x))\right)=\Delta_ix^{q^{R_i}}+\delta_i x.$$
Since $R_i<d_A$ and $\delta_i, \Delta_i\ne 0$, we must have $S_i(x)=0$ by the definition of $d_A$. The latter implies that $M_i(\varepsilon_i x)=M_A(T_i(x))+c_i$ with $c_i\in \F_{q^n}$. Since such polynomials are $q$-linearized, they both vanish at $0\in \F_{q^n}$. Therefore,   $M_i(\varepsilon_ix)=M_A(T_i(x))$, and then 
$$M_i(x)=M_A(T_i(\varepsilon_i^{-1}x)) \quad \text{  for  } 1\le i\le t.$$
All in all, we have proved that $F(x) = M_A(T(x))$ for some $q$-linearized polynomial $T \in \mathbb{F}_{q^n}[x]$. Since $A(M_A(x)) = x^{q^{d_A}} - \alpha_A x =: P_{\alpha_A}(x)$ and $A'(x) = a_0$, by taking derivatives, we conclude that $a_0 \cdot M'_A(x) = -\alpha_A = P'_{\alpha_A}(x)$. In particular,
\begin{align*}
P_{\alpha_A}(T(x)) &= A(M_A(T(x)))\\ & = A(F(x)) \\
&= -a_0(x^{q^n} - x) F'(x) \\
&= -a_0(x^{q^n} - x) M_A'(x) \cdot T'(x) \\
&= \alpha_A \cdot (x^{q^n} - x) T'(x).
\end{align*}
Therefore, $T \in \mathcal{W}(x^{q^{d_A}} - \alpha_A x \mid \mathbb{F}_{q^n})$.

\end{proof}

\subsection{ Proof of Theorem~\ref{thm:main2}}
From Lemma~\ref{lem:linear}, $A(x)=\prod_{u \in \mathcal{U}}(x-u)$ is a $p^k$-linearized polynomial. Also, by construction, $A(x)$ is monic and separable, and it divides $x^q-x$. As previously, let $x^{p^{k d_A}}-\alpha_A x \in \mathbb{F}_q[x]$ be the $p^k$-linearized binomial over $\mathbb{F}_q$ of least positive degree that is divisible by $A(x)$. We must have $\alpha_A=1$, as $1 \in \mathcal{U}$ is a root of $A(x)$. Hence $A(x)$ divides $x^{p^{k d_A}}-x$, and thus  $\mathcal{U} \subseteq \mathbb{F}_{p^{kd_A}}$. By the minimality of $d$, we have that $d_A \geq d$. Conversely, it follows, by the definition of $d$, that $\mathcal{U} \subseteq \mathbb{F}_{p^{d k}}$, hence $A(x)$ divides $x^{p^{d k}}-x$. By the minimality of $d_A$, we have that $d \geq d_A$. In conclusion, we have that $d=d_A$. Recall that $k$ divides $n$, hence $n=k s$ with $s \geq 1$. Since $\mathcal{U} \subseteq \mathbb{F}_q=\mathbb{F}_{p^{k s}}$, we have that $\mathcal{U} \subseteq \mathbb{F}_{p^{k s}} \cap \mathbb{F}_{p^{k d}}=\mathbb{F}_{p^{k g}}$, where $g=\operatorname{gcd}(s, d)$. Again, by the minimality of $d$, we must have $g=d$,  and thus $d$ divides $s$. In particular,  $\mathbb{F}_{p^{k d}} \subseteq \mathbb{F}_q$.

All in all, we proved that $d=d_A$ and $x^{p^{k d_A}}-\alpha_A x=x^{p^{d k}}-x$. Since $A(x)$ divides $x^{p^{dk}}-x$, Lemma~\ref{lem:linear} entails that there exists a $p^k$-linearized polynomial $M \in \mathbb{F}_q[x]$ such that
$$
A(M(x))=M(A(x))=x^{p^{d k}}-x .
$$
From Theorem~\ref{MillsBor} and Proposition~\ref{prop:affine}, we have that
$$
\mathcal{P}(\mathcal{U}, q)=\mathcal{W}\left(A(x) \mid \mathbb{F}_q\right) \backslash \mathbb{F}_q=\left\{M(f(x)) \mid f \in \mathcal{W}\left(x^{p^{d k}}-x \mid \mathbb{F}_q\right)\right\} \backslash \mathbb{F}_q .
$$
However, $M(f(x))$ is a constant if and only if $f$ is a constant. Therefore, we obtain
\small
$$
\left\{M(f(x)) \mid f \in \mathcal{W}\left(x^{p^{d k}}-x \mid \mathbb{F}_q\right)\right\} \backslash \mathbb{F}_q=\left\{M(f(x)) \mid f \in \mathcal{W}\left(x^{p^{d k}}-x \mid \mathbb{F}_q\right) \backslash \mathbb{F}_q\right\}.
$$
From Theorem~\ref{MillsBor}, we have that
$$
\mathcal{W}\left(x^{p^{d k}}-x \mid \mathbb{F}_q\right) \backslash \mathbb{F}_q=\mathcal{P}\left(\mathbb{F}_{p^{d k}}, q\right) .
$$

\section{On the Conjecture~\ref{conj}}

In this section, we provide examples where Conjecture~\ref{conj} holds. Before doing so, we briefly discuss the conjecture itself and the fundamental idea underlying  its proof. The conjecture offers a framework for understanding MVSPs  in terms of the structure of the sets \(\mathcal{U}^v\), where \(\mathcal{U} \subseteq \mathbb{F}_q\) is a vector space. A key aspect of the conjecture is that all MVSPs are of the form \(H^s\), where \(s\) is a suitable integer and \(H\) is a simpler MVSP derived from either previous results or new ones presented in this paper. The source for \(H\) depends on whether or not \(\mathcal{U}\) is a field. Note that  this condition is consistent in the sense that if \(\mathcal{U}\) is not a field, then \(\mathcal{U}^v\) cannot be expressed as \(\mathcal{U}_1^{v_1}\) with \(\mathcal{U}_1\) being a field.
To see this, let \(\mathcal{U} \subseteq \mathbb{F}_q\) be an \(\mathbb{F}_{p^k}\)-vector space of dimension \(m\), i.e., \(\mathcal{U}\) consists of the roots of a factor of \(x^q - x\) of the form
\[
T(x) = \sum_{i=0}^m a_i x^{p^{k i}},
\]
with \(a_m = 1\) and \(a_0 \neq 0\). If \(v\) is a divisor of \(p^k - 1\), and \(\mathcal{U}^v = (\mathbb{F}_{Q})^s\) for some divisor \(s\) of \(Q - 1\), then we have the polynomial identity
\[
\sum_{i=0}^m a_i x^{\frac{p^{k i} - 1}{v} + 1} = x^{\frac{Q-1}{s} + 1} - x,
\]
which implies that \(a_0 = -1\) and \(a_i = 0\) for \(i = 1, \ldots, m-1\). Therefore,
\[
T(x) = x^{p^{m k}} - x,
\]
and thus \(\mathcal{U} = \mathbb{F}_{p^{m k}}\) is a field.

The proof of Conjecture~\ref{conj} is based on  two main steps. First, by considering an MVSP \(F\) with value set \(V_F = \mathcal{U}^v\), we derive relations between the parameters associated with the set \(\mathcal{U}^v\) and the parameters associated with the polynomial \(F\), as described in Theorem~\ref{thm:mills}. A crucial auxiliary result in this context is Lemma~\ref{lem:gamma}, presented below. Second, since Theorem~\ref{thm:main1} guarantees that \(\mathcal{U} \subseteq \mathbb{F}_q\), we conclude that \(F(y)\) is an \(s\)-th power in \(\mathbb{F}_q^*\) for a large proportion of the elements \(y \in \mathbb{F}_q\). This results in a character sum associated with \(F\) having a relatively large absolute value, and by employing Weil's bound, we conclude that \(F\) must have an extremely large degree unless it takes the form \(H^s\).

 \begin{lemma}\label{lem:gamma}
Let $\F_{p^k}$ be a subfield of $ \F_q$, $v$ a divisor of $p^k-1$, and $\mathcal U\subseteq \F_q$  an $\F_{p^k}$-vector space such that $1\in \mathcal U$ and $\#\mathcal U^v>2$. Suppose that $F\in \mathcal P(\mathcal U^v, q)$, and let $\gamma_0 \in V_F=\mathcal U^v$ be as in Theorem~\ref{thm:mills}. If $v>1$ and $\mathcal U^v$ is not a subfield of $\F_q$, then $\gamma_0=0$.
\end{lemma}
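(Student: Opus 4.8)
The plan is to work with the value set polynomial $T(x)=\prod_{w\in\mathcal U^v}(x-w)$ and compute $T'$ on $\mathcal U^v$. Since $1\in\mathcal U$ forces $\F_{p^k}\subseteq\mathcal U$, we have $0\in\mathcal U^v=V_F$, so $0$ is always a candidate for the distinguished value. First I would record the expansion used in the proof of Lemma~\ref{lem:aux}: writing the $p^k$-linearized polynomial $A(x)=\prod_{u\in\mathcal U}(x-u)=\sum_{i=0}^m a_ix^{p^{ki}}$ with $a_m=1$, $a_0\neq0$, one has
\[
T(x)=\sum_{i=0}^m a_i x^{\alpha_i},\qquad \alpha_i=\frac{p^{ki}-1}{v}+1 .
\]
The arithmetic fact driving everything is that, reducing coefficients mod $p$, $\alpha_0=1$ while $\alpha_i\equiv 1-v^{-1}\pmod p$ for every $i\geq1$ (because $p^{ki}\equiv0$ and $p\nmid v$).

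The central step is a direct evaluation of $T'$ on $\mathcal U^v$. For $w=u^v$ with $u\in\mathcal U\setminus\{0\}$, using $w^{(p^{ki}-1)/v}=u^{p^{ki}-1}$ together with $\alpha_i\equiv1-v^{-1}$ gives
\[
T'(w)=a_0+(1-v^{-1})\,u^{-1}\sum_{i=1}^m a_i u^{p^{ki}} .
\]
Now $u\in\mathcal U$ is a root of $A$, so $\sum_{i=1}^m a_iu^{p^{ki}}=A(u)-a_0u=-a_0u$, and the sum collapses to $T'(w)=a_0-(1-v^{-1})a_0=a_0v^{-1}$, independent of $w$. Since the linear coefficient of $T$ is $a_0$, we also have $T'(0)=a_0$. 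Thus $T'$ equals $a_0$ at $0$ and equals $a_0v^{-1}$ at every nonzero point of $\mathcal U^v$.

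To conclude I would invoke Theorem~\ref{MillsBor}, which gives $T'(\gamma_i)=-\theta$ for all $\gamma_i\in V_F\setminus\{\gamma_0\}$. Suppose $\gamma_0\neq0$. As $\#\mathcal U^v>2$, both $0$ and some $w^*\in\mathcal U^v\setminus\{0,\gamma_0\}$ lie in $V_F\setminus\{\gamma_0\}$, whence $T'(0)=T'(w^*)=-\theta$; but the computation yields $T'(0)=a_0$ and $T'(w^*)=a_0v^{-1}$, forcing $a_0=a_0v^{-1}$, i.e.\ $v\equiv1\pmod p$. Therefore, as soon as $v\not\equiv1\pmod p$ we reach a contradiction and deduce $\gamma_0=0$. (Note this also shows $\mathcal U^v$ is automatically not a field in that regime, since for a field value set $T'$ is constant.)

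The main obstacle I expect is the residual case $v\equiv1\pmod p$, which is precisely the regime containing the genuine field case $\mathcal U^v=\F_{p^e}$ where $\gamma_0$ need not vanish. Here $\alpha_i\equiv0\pmod p$ for all $i\geq1$, the first derivative degenerates to the constant $T'\equiv a_0$ (equivalently $T(x)=a_0x+G(x)^p$ for some $G\in\F_q[x]$), and the first-order test cannot locate $\gamma_0$. I would attack this by passing to higher Hasse derivatives $D^{(p^s)}T$, whose coefficients $\binom{\alpha_i}{p^s}$ are governed by Lucas' theorem applied to the explicit base-$p$ expansion $\alpha_i=1+\tfrac{p^k-1}{v}\sum_{l=0}^{i-1}p^{kl}$; taking the least $s$ for which $D^{(p^s)}T$ is nonconstant should again separate $\mathcal U^v$ into the point $0$ and the rest, with the hypothesis that $\mathcal U^v$ is not a field ruling out the degenerate alternative and forcing $\gamma_0=0$.
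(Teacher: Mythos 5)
Your first-derivative computation is correct, and it is a genuinely cleaner route to the opening dichotomy than the paper's: evaluating $T'$ on $\mathcal{U}^v$ via $w^{(p^{ki}-1)/v}=u^{p^{ki}-1}$ and the relation $\sum_{i\geq 1}a_iu^{p^{ki}}=-a_0u$ does give $T'(0)=a_0$ and $T'(w)=a_0v^{-1}$ for $w\neq 0$, and combined with the constancy $T'(\gamma_i)=-\theta$ on $V_F\setminus\{\gamma_0\}$ from Theorem~\ref{MillsBor}, this forces $\gamma_0=0$ whenever $v\not\equiv 1\pmod{p}$. (The paper reaches the same dichotomy---$\gamma_0=0$ or $r\equiv -1\pmod{p}$, which is equivalent since $vr\equiv-1\pmod p$---by matching the coefficient of $x^r$ in two expansions of $T$.)

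However, the residual case $v\equiv 1\pmod{p}$ is a genuine gap, not a loose end: it is exactly the hard case, and it is where the paper spends essentially all of its proof. Your proposed fix via higher Hasse derivatives $D^{(p^s)}T$ has no way to connect back to $\gamma_0$: the only constraint the MVSP machinery provides is Theorem~\ref{MillsBor}'s statement about the \emph{first} derivative being constant on $V_F\setminus\{\gamma_0\}$; there is no analogue asserting that $D^{(p^s)}T$ is constant there, so computing $D^{(p^s)}T$ on $\mathcal{U}^v$, however explicitly, distinguishes nothing about which point of $V_F$ is the exceptional one. What the paper uses instead---and what is absent from your proposal---is part (iv) of Theorem~\ref{thm:mills}: the expansion of $T$ \emph{recentered at} $\gamma_0$ must itself have the constrained shape $T(x+\gamma_0)=\sum_{j=0}^{M}b_jx^{(p^{Kj}-1)/w+1}$. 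Equating this with $\sum_{i}a_i(x+\gamma_0)^{(p^{ki}-1)/v+1}$, writing $r+1=p^ts$ with $\gcd(s,p)=1$, and tracking the second-highest-degree surviving monomial $s\gamma_0^{p^t}x^{r+1-p^t}$ produces the contradiction, in two separate cases $m=1$ and $m>1$; this is also where the hypothesis that $\mathcal{U}^v$ is not a subfield enters (it rules out $s=1$, e.g.\ excluding $\mathcal U^v=\F_{p^t}$ when $m=1$). Without some constraint of this kind that is anchored at $\gamma_0$, your plan for the case $v\equiv 1\pmod{p}$ cannot close.
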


 \begin{proof}
From (i) in Lemma~\ref{lem:linear}, we know that   
$\prod_{u\in \mathcal U}(x-u)=\sum_{i=0}^{m}a_ix^{p^{ki}} \in \F_q[x],$
where  $m=\dim \mathcal U\geq 1$,  $a_m=1$, and $a_0\ne 0$. Therefore, since   $ V_F=\mathcal U^v$, we have  
$$T(x)=\prod_{a\in V_F}(x-a)=\sum_{i=0}^{m}a_ix^{\frac{p^{ki}-1}{v}+1}.$$
Using Theorem~\ref{thm:mills}, we can find a subfield $\F_{p^K}$ of $\F_q$, a divisor $w$ of $p^{K}-1$, and an integer $M\geq 1$ such that $r=\frac{p^{MK}-1}{w}=\frac{p^{mk}-1}{v}=\#V_F-1$, and
 $T(x+\gamma_0)=\sum_{j=0}^{M}b_jx^{\frac{p^{Kj}-1}{w}+1} \in \F_q[x]$, where $b_M=1, b_0\ne 0$. In particular, 
  \begin{equation}\label{eq:gamma}\sum_{i=0}^{m}a_i (x+\gamma_0)^{\frac{p^{ki}-1}{v}+1}=\sum_{j=0}^{M}b_jx^{\frac{p^{Kj}-1}{w}+1}.\end{equation} 
Since $\#V_F=r+1>2$, we have $r>1$, and then  $(M, w)\ne (1, p^K-1)$. The latter implies that $r=\frac{p^{MK}-1}{w}>\frac{p^{K(M-1)}-1}{w}+1$, and,  as a consequence, the right-hand side of Eq.~\eqref{eq:gamma} has no monomial of degree $r$.

Similarly, we have $(m, v)\ne (1, p^k-1)$, and then $\frac{p^{k(m-1)}-1}{v} +1<r$. Thus, from Eq.~\eqref{eq:gamma}, the monomial $(r+1)\gamma_0 x^r$ in the binomial expansion of $(x+\gamma_0)^{r+1}$ vanishes.

Therefore, either $\gamma_0=0$ or $r\equiv -1\pmod p$. Suppose, by way of contradiction, that $\gamma_0\ne 0$. Then $r\equiv -1\pmod p$, which implies $v, w\equiv 1\pmod p$. In particular, $\frac{p^{ki}-1}{v}+1, \frac{p^{Kj}-1}{w}+1\equiv 0\pmod p$ for every $i, j\ge1$, and $a_0=b_0$.

We write $r+1=p^t\cdot s$, where $t\ge 1$ and $\gcd(s, p)=1$. We  split the proof into two cases.

 \begin{itemize}%[\rm(i)]
\item Case $m=1$. In this case, we cancel $a_0x$ and $b_0x$ in Eq.\eqref{eq:gamma} to obtain
\begin{equation}\label{eq:gamma2}(x^{p^t}+\gamma_0^{p^t})^{s}+a_0\gamma_0=\sum_{j=1}^{M}b_jx^{\frac{p^{Kj}-1}{w}+1}.\end{equation}
If $s=1$, then $r=p^t-1$ and $v=\frac{p^k-1}{p^t-1}$. Since $m=1$ and $1\in \mathcal U$, we have  $\mathcal U=\F_{p^k}$, and thus $\mathcal U^v= \F_{p^t}$ is a subfield of $\F_q$, which is a contradiction. Hence $s\geq 2$, and since $\gcd(s, p)=1$, the left-hand side of Eq.\eqref{eq:gamma2} has at least two monomials of distinct degrees. This implies that $M>1$.
 Note  that $$s\gamma_0^{p^t}x^{p^t(s-1)}=s\gamma_0^{p^t}x^{r+1-p^t} \neq 0$$ is the  monomial of second largest degree in the left-hand side of Eq.~\eqref{eq:gamma2}. This implies 
 \begin{equation}\label{eq:gamma2.1}
 r+1-p^t\le \frac{p^{K(M-1)}-1}{w}+1,
 \end{equation}
 and since $rw=p^{MK}-1$, we have that
 $p^{MK}\le p^{K(M-1)}+wp^t$, which gives 
   \begin{equation}\label{eq:gamma2.2}
 p^{t}\ge \frac{p^{K(M-1)} (p^K-1)}{w}\geq p^{K(M-1)}.
  \end{equation}
Note that  $s\geq 2$  gives  $r+1=sp^t\geq 2p^t$, and then  from  Eqs.~\eqref{eq:gamma2.1}  and  \eqref{eq:gamma2.2} we have 
 $$p^t\leq r+1-p^t\leq  \frac{p^{K(M-1)}-1}{w}+ 1\le \frac{p^{t}-1}{w}+1,$$
which implies  $w=1$, $s=2$, $t=K(M-1)$. Thus Eq. ~\eqref{eq:gamma2.2} gives $p^K=2$, i.e., $K=1$ and $p=2$, which contradicts $\gcd(s, p)=1$.

 \item Case  $m>1$. In this case, we  claim that $s\geq 2$. First note that  $vr+1=p^{mk}$  and $v \mid (p^k-1)$ give   $r\ge \frac{p^{mk}-1}{p^k-1}>p^{k(m-1)}$.
For $s=1$, we have   $r=p^t-1$  and
  $$v=\frac{p^{mk}-1}{r}=\frac{p^{mk}-1}{p^t-1}.$$
Then $v>1$ implies  that $t$ is a proper divisor of $mk$. In particular, $r=p^t-1< p^{mk/2}$.  This gives us $k(m-1) < mk/2$, which further implies $m \leq 1$, a contradiction.
Now, since $r>p^{k(m-1)}$ and $s, v\ge 2$, we have
 $$\frac{p^{k(m-1)}-1}{v}<\frac{r+1}{2}= \frac{s}{2}\cdot p^t\le (s-1)p^t=r+1-p^t.$$ 
 Therefore, $s\gamma_0^{p^t}x^{r+1-p^t}\neq 0$ is the monomial of second largest degree in the  left-hand side  of Eq.~\eqref{eq:gamma}, and this monomial is not canceled in the expansion of $\sum_{i=0}^{m}a_i(x+\gamma_0)^{\frac{p^{ki}-1}{v}+1}$. We can then proceed as  in  Eq.~\eqref{eq:gamma2.1} of item (i) to obtain a contradiction.

\end{itemize}
 \end{proof}

\begin{remark}
The conditions $v>1$ and $\mathcal{U}^v$ not being a subfield of $\F_q$ are necessary in Lemma~\ref{cor:vec}. In fact, if $v=1$, $F\in \mathcal P(\mathcal U, q)$, and $\gamma_0 \in V_F=\mathcal U^v=\mathcal U$ is as in Theorem~\ref{thm:mills}, then $F+u\in  \mathcal P(\mathcal U, q)$ for every $u\in \mathcal U$. Moreover, the corresponding $\gamma_0$ for $F+u$ equals $\gamma_0+u$. As $\#\mathcal U>2$, we can take $u\in \mathcal U$ in a way that $u+\gamma_0\ne 0$. Now for $\mathcal U^v=\F_{Q}$, a subfield of $\F_q$, we have a generic counterexample
for Lemma~\ref{cor:vec}: it is direct to verify that, for every $a\in \F_Q^*$, the polynomial $F(x)=x^{\frac{q-1}{Q-1}}+a$ is an MVSP over $\F_q$ with $V_F=\F_{Q}$ and $\gamma_0=a\ne 0$.
\end{remark}

\subsection{Proof of Theorem~\ref{pre-conj}}

\begin{proof}
Let $F\in\mathcal P(\mathcal U^v, q)$, hence $\# V_F=\# \mathcal U^v=\frac{p^{mk}-1}{v}+1$. Therefore, if $k_0, v_0,$ and  $m_0$ are the positive integers associated to $F$, as in Theorem~\ref{thm:mills}, then $v_0$ divides $p^{k_0}-1$, and $\# V_F=\frac{p^{m_0k_0}-1}{v_0}+1$. In particular, 
\begin{equation}\label{equ-r}
r=\#V_F-1=\frac{p^{mk}-1}{v}=\frac{p^{m_0k_0}-1}{v_0}.
 \end{equation}
  We consider the cases (i) and (ii) separately.
\begin{itemize}%[(i)]
\item Suppose 
$\mathcal U\ne \F_{p^{mk}}$. %and $F\in \mathcal P(\mathcal U^v, q)\setminus \{f^v\,|\, f\in \pp(\mathcal U, q)\}$. 
Since $1\in \mathcal U$, we have $\F_{p^k}\subseteq \mathcal U\ne \F_{p^{mk}}$, and then $m>1$. From Lemma~\ref{lem:linear}, $A(x)=\prod_{u\in \mathcal U}(x-u)=\sum_{i=0}^ma_ix^{p^{ki}}$, where $a_m=1$ and $a_0\ne 0$. Since $V_F=\mathcal U^v$, we obtain $T(x)=\prod_{z\in V_F}(x-z)=\sum_{i=0}^ma_ix^{\frac{p^{ki}-1}{v}+1}$. Since $\mathcal U\ne \F_{p^{mk}}$, Remark~\ref{rem} implies that $V_F=\mathcal U^v$ is not a subfield of $\F_q$. Suppose $F\not\in \{f^v\,|\, f\in \mathcal P(\mathcal U, q)\}$, hence $v\ne 1$. Therefore, if $\gamma_0$ is the element of $\F_q$ associated to $F$,  as in Theorem~\ref{thm:mills}, Lemma~\ref{lem:gamma} entails that $\gamma_0=0$. 

We claim that $m_0>1$. Suppose, by contradiction, that $m_0=1$. In this case, Theorem~\ref{thm:mills} entails that $T(x)=x^{\frac{p^{m_0k_0}-1}{v_0}+1}-\delta x$ for some $\delta\in \F_q^*$. However, since $1\in \mathcal U$, we also have $1\in V_F$. In particular, $T(1)=0$, and then $\delta=1$. In conclusion, 
$$\sum_{i=0}^ma_ix^{\frac{p^{ki}-1}{v}+1}=x^{\frac{p^{m_0k_0}-1}{v_0}+1}- x,$$
hence $a_0=-1$,  and $a_i=0$ for $i=1, \ldots,m-1$. The latter implies that $A(x)=x^{p^{mk}}-x$, and then $\mathcal U=\F_{p^{mk}}$, a contradiction. In particular, we proved that $m, m_0>1$.   We claim that $mk=m_0k_0$.  Note that Eq. \eqref{equ-r}  gives
$$(p^{mk}-1)v_0= (p^{m_0k_0}-1)v,$$
and $p^k-1$ being a multiple of  $v$ implies that $p^{mk}-1$ divides $(p^{m_0k_0}-1)(p^k-1)$. Therefore,  for $d=\gcd(mk, m_0k_0)$, we have that 
$\frac{p^{mk}-1}{p^d-1}$ divides $p^k-1$. If $d<mk$, then  $d\le \frac{mk}{2}$,  which  gives
$$\frac{p^{mk}-1}{p^d-1}>p^{mk-d}-1\ge p^{\frac{mk}{2}}-1.$$
Hence $p^k-1>p^{\frac{mk}{2}}-1$, that is,  $k>\frac{mk}{2}$, which contradicts  $m>1$. Therefore,  $d=mk$.  Since $m_0>1$,  in a similar way, we obtain $d=m_0k_0$, which yields  $mk=m_0k_0$,  as claimed. In particular, from Eq.\eqref{equ-r},   we have $v=v_0$.

 Therefore, Theorem~\ref{thm:mills} implies that, for $L=\gcd(F,x^q-x)$, there exist $A, B, N\in \F_q[x]$ such that $F=L^vN^{p^{mk}}$ and $L=A^{p^{mk}}x+B^p$. Since $F\in \mathcal P(\mathcal U^v, q)\setminus \{f^v\,|\, f\in \pp(\mathcal U, q)\}$, Lemma~\ref{lem:aux} entails that $F$ is not of the form $G^v$ with $G\in \F_q[x]$. 

Recall that $F=L^{v}N^{p^{mk}}$, $V_F=\mathcal U^v$ has at least two elements, and $\mathcal U\subseteq \F_q$. Thus  for at least one element $y\in \F_q$,  we have   $F(y)=b^v$ for some $b\in \F_{q}^*$. In particular, if $N$ is of the form $aH^v$,  with $a\in \F_q$ and $H\in \F_q[x]$, it follows that $a$ is of the form $c^v$ for some $c\in \F_q$. But this would imply  $F=L^{v}N^{p^{mk}}=G^v$ for some $G\in \F_q[x]$, a contradiction. Therefore, $N$ is a nonconstant polynomial, which  is not of the form $aH^v$ with $a\in \F_q$ and $H\in \F_q[x]$. Let $\chi_v$ be any multiplicative character of $\F_{q}$ of order $v$. Theorem~\ref{thm:Weil} entails 
\begin{equation}\label{equ-hw}
\left|\sum_{y\in \F_q}\chi_v(N(y))\right|\le (\deg(\rad(N))-1)p^{n/2}.
\end{equation}
Observe that $F(y)$ is of the form $b^v$ with $b\in \F_q^*$ for every $y\in \F_q$ such that $F(y)\ne 0$. As $F=L^{v}N^{p^{mk}}$ and $L=\gcd(F, x^q-x)$, we conclude that $\chi_v(N(y))=1$ whenever $L(y)\ne 0$. Therefore, if $Z\subseteq \F_q$ is  the set of the $\deg(L)$ distinct roots of $L$, we obtain
\begin{align*}
q - \deg(L) &= \left\lvert \sum_{y\in \F_q} \chi_v(N(y)) - \sum_{y\in Z} \chi_v(N(y)) \right\rvert \\
            &\le \left\lvert \sum_{y\in \F_q } \chi_v(N(y)) \right\rvert + \sum_{y\in Z} \lvert \chi_v(N(y)) \rvert \\
            &\le \left\lvert \sum_{y\in \F_q } \chi_v(N(y)) \right\rvert + \deg(L).
\end{align*}
Hence $ q-2\deg(L)\leq \left|    \sum_{y\in \F_q}\chi_v(N(y))\right|$, and then Eq. \eqref{equ-hw} yields 
\begin{equation}\label{rad1}
\deg(N)\geq \deg(\rad(N))\ge p^{n/2}+1-2\deg(L) p^{-n/2}.
\end{equation}
From $F=L^vN^{p^{mk}}$, it follows  that 
\begin{align*}
\deg(F) &= v \cdot \deg(L) + p^{mk} \deg(N) \\
        &\ge (p^{n/2} + 1)p^{mk} +  (v - 2p^{mk - n/2})\deg(L) .
\end{align*}
Since  $v\ge 2$,  the proof of assertion (i)  will be finished once we prove that $p^{mk-n/2}<1$, that is, $mk< \frac{n}{2}$.  Given that  $0\in V_F$ is an element with the smallest nontrivial pre-image by $F$ over $\F_q$, we obtain $\deg(L)=\#(F^{-1}(0)\cap \F_q) \le \frac{p^n}{\# V_F}=\frac{p^n}{r+1}$, and then  Eq. \eqref{rad1} gives 
\begin{equation}\label{eq:N0}
\deg(N)\ge\frac{r-1}{r+1}p^{n/2}+1.
\end{equation}
In addition, since $m>1$, $v$ divides $p^k-1$, and $r=\frac{p^{mk-1}}{v}$,  we have  
$$r\ge \frac{p^{mk}-1}{p^k-1}>p^{(m-1)k}\ge 2.$$
Hence $r\ge 3$ , $\deg(N)\ge\frac{1}{2}p^{n/2}+1$, and then  
$$\deg(F)>p^{mk}\cdot \deg(N)\ge \frac{1}{2}p^{n/2+mk}.$$
Therefore, if $mk\ge \frac{n}{2}$, then  $\deg(F)\ge \frac{q}{2}$ , which contradicts $3\le \# V_F<\frac{q}{\deg(F)}+1$.

\item Suppose  $\mathcal U=\F_{p^{mk}}$. Observe that $e$ is the smallest positive integer such that $\frac{p^{mk}-1}{v}$ divides $p^e-1$. 

From Eq. \eqref{equ-r},  we have  that 
 $\frac{p^{mk}-1}{v}$ divides $p^{m_0k_0}-1$, and then, by the minimality of $e$, it follows that $e$ divides $m_0k_0$ and  $v_0=\frac{v(p^{m_0k_0}-1)}{p^{mk}-1}$ is divisible by $t=\frac{v\left(p^e-1\right)}{p^{m k}-1}$. Moreover, the condition $\mathbb{F}_{p^e} \subseteq$ $\mathbb{F}_{p^{m k}}$ implies that $e$ divides $mk$, and  since $\F_{p^{mk}}=\mathcal U\subseteq \F_q$, we conclude that $p^e-1$ divides $q-1$.

Suppose $F\not\in\{f^t\,|\, f\in \pp(\mathcal \F_{p^e}, q)\}$. If $v=1$, then $e=mk$ and $t=1$, a contradiction with $F\in \pp(\mathcal U^v, q)=\pp(\mathcal \F_{p^e}, q)$. Hence $v>1$. 
Let $\gamma_0$ be the element of $\F_q$ associated to $F$, as in Theorem~\ref{thm:mills}. From Remark~\ref{rem}, $V_F=\mathcal U^v$ is not a subfield of $\F_q$. Hence $\gamma_0=0$ by Lemma~\ref{lem:gamma}. In particular, Theorem~\ref{thm:mills} entails that, for $L=\gcd(F,x^q-x)$, there exist $A, B, N\in \F_q[x]$ such that $F=L^{v_0}N^{p^{m_0k_0}}$ and $L=A^{p^{m_0k_0}}x+B^p$. Since $F\in \pp(\mathcal U^v, q)\setminus \{ f^{t}\,|\, f\in \pp(\F_{p^e}, q)\}$ and $v_0$ is divisible by $t$, Lemma~\ref{lem:aux} entails that $F$ is not of the form $G^t$ with $G\in \F_q[x]$. Therefore, $t>1$.

Since  $V_F=\left(\F_{p^{mk}}\right)^v$ and $p^e-1=t\cdot \frac{p^{mk}-1}{v}$ divides $q-1=p^n-1$, we have that $F(y)$ is a $t$-th power of an element in $\F_q^*$ for every $y\in \F_q$ with $F(y)\ne 0$. Following the arguments of the previous item, we conclude that $N$ satisfies Eq.~\eqref{eq:N0} and, if $r\ge 3$ or $m_0k_0\le \frac{n}{2}$, then \begin{equation}\label{eq:low}\deg(F)\ge (p^{n/2}+1)p^{m_0k_0}.\end{equation} 
However, if $r=2$,  then $2=r=\frac{p^{m_0k_0}-1}{v_0}$. As $v_0$ divides $p^{k_0}-1$, we conclude that $m_0=1$. Recall that $k_0$ divides $n$, hence $k_0=n$ or $k_0\le \frac{n}{2}$. The case $k_0=n$ cannot occur since, from hypothesis, $N$ is nonconstant and $F=L^vN^{p^{m_0k_0}}$ has degree smaller than $p^n$. Hence $m_0k_0=k_0\le \frac{n}{2}$. 
In particular, we conclude that $\deg(F)\ge (p^{n/2}+1)p^{m_0k_0}$. Since $e$ divides $m_0k_0$, we obtain $e\le m_0k_0$, and thus $\deg(F)\ge (p^{n/2}+1)p^{e}$. 
\end{itemize}
The statements regarding instances where Conjecture~\ref{conj} holds follow directly from the fact that if $F$ is an MVSP with $\# V_F=r+1$, then $\deg(F)<\frac{q}{r}$ and, if $m>1$, then $r=\frac{p^{mk}-1}{v}\ge \frac{p^{mk}-1}{p^k-1}>p^{(m-1)k}$.
\end{proof}

\subsection{MVSPs over fields of size $p^4$}
We have the following result.

\begin{theorem}\label{thm:p4-a}
 Conjecture~\ref{conj} holds for $q=p^4$. 
\end{theorem}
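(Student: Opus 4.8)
The plan is to reduce, via Theorem~\ref{thm:main1}, to the case $S=\mathcal U^v$ and then run through the finitely many admissible pairs. Since $\mathcal U$ is an $\F_{p^k}$-subspace of $\F_{p^4}$ with $1\in\mathcal U$, we need $k\in\{1,2,4\}$ and $mk\le 4$, and $\mathcal U=\F_{p^k}$ precisely when $m=1$; the resulting pairs are $k=1,\ m\in\{1,2,3,4\}$, $k=2,\ m\in\{1,2\}$, and $k=4,\ m=1$. First I would dispose of every case with $m>1$, namely $(k,m)\in\{(1,2),(1,3),(1,4),(2,2)\}$: here $2m-1\ge 3$, so $(2m-1)k\ge 3>2=n/2$, and Theorem~\ref{pre-conj}(b) already yields the conjecture. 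Next, whenever $\mathcal U^v$ is a subfield of $\F_{p^4}$ the conjecture holds by Remark~\ref{rem}. This leaves only the field cases $m=1$, that is $\mathcal U=\F_{p^k}$ with $k\in\{1,2,4\}$ and $\mathcal U^v$ not a subfield.

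In this surviving case I would invoke Theorem~\ref{pre-conj}(ii). With $e$ the smallest integer such that $\mathcal U^v\subseteq\F_{p^e}$ and $t=v(p^e-1)/(p^{mk}-1)$, any $F$ in the exceptional set $\mathcal P(\mathcal U^v,q)\setminus\{f^t\mid f\in\mathcal P(\F_{p^e},q)\}$ satisfies $\deg F\ge (p^{2}+1)p^{e}$ with $e\mid m_0k_0$, where $m_0,k_0,v_0$ are the parameters attached to $F$ by Theorem~\ref{thm:mills}. On the other hand, since $F$ is an MVSP with $\#V_F=r+1$ we have $\deg F\le (p^{4}-1)/r$, where $r=\#V_F-1\ge 2$. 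Comparing the two bounds gives $r<p^{\,2-m_0k_0}$, so $m_0k_0\ge 2$ is impossible; hence $m_0=k_0=1$ and $e=1$. Consequently $V_F=(\F_p)^t$ with $t=(p-1)/r$, and by Lemma~\ref{lem:aux}(i) the whole theorem collapses to the single assertion that every MVSP $F\in\F_{p^4}[x]$ with $V_F=(\F_p)^t$ is a $t$-th power of a member of $\mathcal P(\F_p,p^4)$. For such an $F$, Lemma~\ref{lem:gamma} gives $\gamma_0=0$, Theorem~\ref{thm:mills} yields $F=L^{t}N^{p}$ with $L=A^{p}x+B^{p}$, and differentiation gives $F'=tL^{t-1}(AN)^{p}$ and $\deg F=t(p^{3}+p^{2}+p+1)$.

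The hard part, and what I expect to be the main obstacle, is proving this last assertion. The Weil estimate behind Theorem~\ref{pre-conj} is not sharp enough in the regime $m_0=k_0=1$: the degree inequalities it produces remain consistent with an exceptional $F$, so they cannot by themselves rule one out. I would therefore close the gap by the finer, \cite{BR}-style analysis. Since $T(x)=\prod_{a\in(\F_p)^t}(x-a)=x^{r+1}-x$, substituting $F'=tL^{t-1}(AN)^p$ into $T(F)=\theta(x^{q}-x)F'$ and writing $x^{p^4}-x=L\cdot C$ produces, after cancelling $L^{t-1}N^{p}$, the clean identity $F^{r}-1=\theta t\,C\,A^{p}$, in which $C$ is squarefree with all roots in $\F_{p^4}$ and $A^{p}$ is a perfect $p$-th power. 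Combining the resulting control on the multiplicities of the factors $F-\zeta$ (for $\zeta\in(\F_p)^t\setminus\{0\}$) with Mason--Stothers (Theorem~\ref{abc}) and the explicit description of $\mathcal P(\F_p,p^4)$ from \cite{Bor2}, one shows that every irreducible factor of $N$ occurs with multiplicity divisible by $t$; hence $N=cH^{t}$ and $F=c^{p}(LH^{p})^{t}$ is a $t$-th power, contradicting exceptionality. This reduces the theorem to a concrete, finite structural determination, which is where essentially all the difficulty lies.
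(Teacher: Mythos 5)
Your reduction matches the paper's in substance: Theorem~\ref{pre-conj} disposes of every case with $m>1$ (you cite (b), the paper cites (i) together with (a); for $n=4$ both apply), Remark~\ref{rem} handles the subfield cases, and the degree comparison pins the surviving situation down to $m_0=k_0=1$, $e=1$, $V_F=(\F_p)^t$, $F=L^tN^p$, $rt+1=p$. Your ``clean identity'' $F^r-1=\theta t\,CA^p$ is correct (indeed $\theta t=-rt\equiv 1\pmod p$) and is equivalent to the paper's Eq.~\eqref{eq:main}, $(A^px+B^p)N^r=Ax^{p^3}+B$. Two local slips before the main issue: the bound you actually need is $\deg F\ge(p^2+1)p^{m_0k_0}$, which is Eq.~\eqref{eq:low} from the \emph{proof} of Theorem~\ref{pre-conj}; the \emph{statement} of part (ii) only gives $(p^2+1)p^{e}$, and since $e\le m_0k_0$ your displayed deduction $r<p^{2-m_0k_0}$ does not follow from what you cite (it yields only $e=1$ and $r<p$, leaving $k_0\in\{2,4\}$ open). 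Also, $\deg F=t(p^3+p^2+p+1)$ is not forced for an MVSP (only $\deg F\le (p^4-1)/r$ is); nothing rests on this, but it is false as stated.

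The genuine gap is the step you explicitly defer: proving that every irreducible factor of $N$ has multiplicity divisible by $t$. You assert that ``one shows'' this via Mason--Stothers and the description of $\pp(\F_p,p^4)$, but no argument is given, and the claim as you phrase it is not what the Mason--Stothers mechanism delivers here. The paper's actual route is: take the Wronskian of Eq.~\eqref{eq:main} to get $A^pN^{r+1}=Sx^{p^3}+T$ with $S=A'N-rAN'$, $T=B'N-rBN'$; perform a delicate gcd decomposition and radical-degree count so that Theorem~\ref{abc} applies and shows the relevant cofactor is a $p$-th power; conclude that every root of $N$ has multiplicity $\equiv 0$ or $t\pmod p$ --- note $\bmod\ p$, not $\bmod\ t$, so a multiplicity like $t+p\equiv 1\pmod t$ is \emph{not} excluded at this stage. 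Converting this into the desired $t$-th-power statement requires a \emph{separate} argument: writing $N=N_1^tN_2^p$, the character-sum bound of Theorem~\ref{thm:Weil} (run as in Eqs.~\eqref{rad1} and \eqref{eq:N0}) forces $\deg N_2\ge\tfrac{r-1}{r+1}p^2$ unless $N_2$ is a constant times a $t$-th power, whence $\deg F>\tfrac{r-1}{r+1}p^4$, contradicting $\deg F<p^4/2$, with a dedicated case $r=2$ that uses $p\ge 5$. Your proposal contains neither the Wronskian/gcd computation nor this Weil-bound step; as written, the heart of the theorem is assumed rather than proved.
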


\begin{proof}
Let $k$ be a divisor of $4$, $v>1$  a divisor of $p^k-1$, and  $\mathcal U\subseteq \F_{p^4}$  an $\F_{p^k}$-vector space of dimension $m$ such that $(m, v)\ne (1, p^k-1)$ and $1\in\mathcal U$.

If $\mathcal U\neq \F_{p^{mk}}$, then from  (i) and (a) of Theorem~\ref{pre-conj},  Conjecture~\ref{conj} holds for $q=p^4$.  Let us assume  $\mathcal U=\F_{p^{mk}}$, and let $e, t$ be as in assertion (ii) of Theorem~\ref{pre-conj}. Suppose 
\begin{equation}\label{eq-prov}
F\in \mathcal P(\mathcal U^v, p^4)\setminus \{ f^{t}\,|\, f\in \pp(\F_{p^e}, q)\},
\end{equation}
 and let $k_0, v_0,$ and  $m_0$ be the positive integers associated to $F$, as in Theorem~\ref{thm:mills}. Moreover, set $r=\# V_F-1\ge 2$, that is, $r=\frac{p^{mk}-1}{v}$. Observe that any MVSP $F\in \F_{p^4}[x]$ with $\#V_F>1$ satisfies $\deg(F)\le p^4$, and 
recall that $\deg(F)\ge (p^{n/2}+1)p^{m_0k_0}$ was proved in Theorem~\ref{pre-conj} (see Eq. \eqref{eq:low}). Hence $m_0=k_0=1$. We have that $\frac{p^{mk}-1}{v}=\frac{p^{m_0k_0}-1}{v_0}=\frac{p-1}{v_0}$,  and thus $e$ is the least positive integer such that $\frac{p-1}{v_0} $ divides $p^e-1$, that is, $e=1$. In particular, $t=\frac{(p^e-1)v}{p^{mk}-1}=\frac{(p^e-1)v_0}{p^{m_0k_0}-1}=v_0$,  hence $rt+1=p$. Considering \eqref{eq-prov}, assertion (ii) of  Theorem \ref{pre-conj} gives $t\ge 2$, and then  $ r\ge 2$  implies $p\ge 5$. 

From Theorem~\ref{thm:mills}, there exist polynomials $A, B, L, N\in \F_{p^4}[x]$ such that the following hold:
\begin{itemize}%[(i)]
\item $L=\gcd(F,x^q-x)=A^px+B^p$;
\item $F=L^tN^p$;
\item  $rt+1=p$.
\end{itemize}
Note that  we are using that $\gamma_0=0$, which  follows from Lemma~\ref{lem:gamma}, as  (ii) of Theorem~\ref{pre-conj} gives that $\mathcal{U}^v$  is not a subfield of $\mathbb{F}_q$ and  $v_0=t>1$.

Eq. \eqref{eq-prov} and  Lemma~\ref{lem:aux} imply  that $F$, hence $N$, is not of the form $H^t$ with $H\in \F_{p^4}[x]$. 
Observe that  if  every root of $N$ is of multiplicity $\equiv 0, t\pmod p$, then $N=N_1^tN_2^p$, where $N_2$ is not of the form $H^t$. Following the proof of  Eqs. \eqref{rad1} and 
\eqref{eq:N0}   in the proof of Theorem~\ref{pre-conj}, we have that  $N_2$ is not of the form $aH^t$ with $a\in \F_q, H\in \F_q[x]$ and $$\deg(N_2)\ge \max\left\{p^2+1-2\deg(L)p^{-2}, \frac{r-1}{r+1}p^2\right\}.$$
The latter implies 
 $$\deg(F)>p\cdot \deg(N)\ge p^2\deg(N_2)\ge \frac{r-1}{r+1}p^4.$$ 
  Since $F$ is an MVSP with a value set of size $r+1\ge 3$, we have $\deg(F)<\frac{p^4}{2}$, which  contradicts the inequality  $\deg(F)>\frac{r-1}{r+1}p^4$ if $r>2$. If $r=2$, then $t=\frac{p-1}{2}$, since $rt+1=p$. As $p^2\deg(N_2)<\deg(F)<\frac{p^4}{2}$, we conclude that $\deg(N_2)<\frac{p^2}{2}$. Since  $\deg(N_2)\ge p^2-2\deg(L)p^{-2}+1$, we obtain $\deg(L)>\frac{p^4}{4}$. Hence $\deg(F)\ge t\cdot \deg(L)>\frac{p^4(p-1)}{8}\ge \frac{p^4}{2}$, since $p\ge 5$, a contradiction with  $\deg(F)<\frac{p^4}{2}$.  
  
  Recall that $p=rt+1$ with $r, t\ge 2$.  Hence,  once we prove that any root of $N$ has multiplicity $\equiv 0, t\pmod p$, we conclude that Conjecture~\ref{conj} holds for $q=p^4$.  If the derivative $N'$ is vanishing, this is trivially true. Assume that $N'$ is not vanishing.

Under our hypothesis, $V_F=(\mathbb{F}_p)^t=\mathcal C_r\cup\{0\}$, where $\mathcal C_r\subseteq \F_{p}$ is the cyclic group of order $r$. Hence  $T(x)=\prod_{s\in \mathcal V_F}(x-s)=x^{r+1}-x$, and thus $T'(x)=(r+1)x^r-1$. In particular, $T'(a)=r$ for every $a\in V_F\setminus \{0\}$. Since $tr=-1\in \F_p$ and $\# V_F>2$, Theorem~\ref{thm:mills} implies that $$F^{r+1}-F=T(F)=-r(x^{p^4}-x)F'=(x^{p^4}-x)L^{t-1}N^p,$$ and thus replacing $F$ with  $L^tN^p$ yields
\begin{equation}\label{eq:main}\underbrace{(A^{p}\cdot x+B^{p})N^r}_{P_1}=\underbrace{Ax^{p^{3}}+B}_{P_2}.\end{equation}
Observe that for every $b\in \F_{p^4}$, the degree and the value set of $F(x+b)$ coincide with those  of $F$. Moreover, the roots of $N$ have multiplicity $\equiv 0, t\pmod p$ if and only if the same holds for $N(x+b)$. Thus we may assume that $F(0)\ne 0$. Since $L=\gcd(F, x^q-x)=A^px+B^p$, we have $\gcd(A, B)=\gcd(B, x)=1$. In particular, Eq. \eqref{eq:main} implies  \begin{equation}\label{eq:gcd}\gcd(A, N)=\gcd(A, B)=\gcd(B, N)=\gcd(N, x)=\gcd(B, x)=1.\end{equation} If $W(P, Q)=P'Q-PQ'$ denotes the Wronskian map, Eq.\eqref{eq:main} entails that $W(P_1, N^r)=W(P_2, N^r)$, and then
\begin{equation}\label{eq:main2} A^{p}N^{r+1}=S\cdot x^{p^{3}}+T,\end{equation}
where $S=A'N-rAN'$ and $T=B'N-rBN'$. 
Since $N'\neq 0$ and $\gcd(N,A)=\gcd(N,B)=1$, we have that neither $S$ nor $T$ is vanishing. Moreover, since  $\gcd(N,xA)=1$, we have  $$G=\gcd(S x^{p^{3}}, T)=g_1g_2,$$ where $g_1=\gcd(N^{r+1}, T, S)=\gcd(N, N')$ and $g_2=\gcd\left(\frac{Sx^{p^3}}{g_1}, \frac{T}{g_1}, A^p\right)=\gcd\left(\frac{Sx^{p^3}}{g_1}, A^p\right)$, in view of Eq. \eqref{eq:gcd}. Since $g_2$ divides $A^p$, Eq.~\eqref{eq:main2} yields
\begin{equation}\label{eq:main3}\underbrace{\frac{A^{p}}{g_2}\cdot\frac{N^{r+1}}{g_1} }_{R}=\underbrace{\frac{Sx^{p^{3}}}{g_1g_2}}_{S_1}+\underbrace{\frac{T}{g_1g_2}}_{T_1}.\end{equation} 
By construction, the polynomials $R, S_1$, and $T_1$ are pairwise relatively prime. We claim  that $R$ is a polynomial of vanishing derivative.
Our proof relies on Theorem~\ref{abc}.  Therefore, if we set  $$\Delta=\max\{\deg R, \deg S_1, \deg T_1 \}\quad\text{and}\quad \varepsilon=\deg \rad(RS_1T_1),$$
it suffices to prove that $\Delta>\varepsilon-1$. Set $n_1=\deg(N)>0$ and $d=\max\{\deg(A), \deg(B)\}$.
%We have that $(p-1)d+rn_1\le p^3-1$ by Eq.\eqref{eq:main} and $n_1>\frac{1}{3}p^{2}$ by Eq.\eqref{eq:N}. Since $r\ge 2$, the former implies that $d<p^2$. 
Write $A(x)=x^{\ell}\cdot A_0(x)$, where $0\le \ell\le d$, and $A_0\in \F_{p^4}[x]$ is not divisible by $x$. 
Note that if $\ell \geq p^2$, then Eq.~\eqref{eq:main2} implies $T=B'N-rBN'\neq 0$ is divisible by $x^{p^3}$, and then  $\deg B+ \deg N >p^3$, which gives
$$\deg F\geq p(\deg B+\deg N)>p^4,$$
a contradiction. Therefore, $\ell < p^2$. From Eq.~\eqref{eq:gcd}, $N(0)\ne 0$, hence $g_1$ is not divisible by $x$, and thus $\deg(\rad(R))\le d+n_1-\ell+1$. Since  $\ell p<p^3$, it follows that   $g_2(x)$ is of the form $x^{p\ell}\cdot G_2(x)$ with $G_2\in \F_{p^4}[x]$ not divisible by $x$. Therefore, $\deg(\rad(S_1))\le  \deg(S_1)-(p^3-p\ell)+1$ and 
$\deg(T_1)\le \deg(T)-\deg(g_2) \leq d+n_1-1-p\ell$. We conclude that
$$\varepsilon-1\leq  d+n_1-\ell+\deg(S_1)-(p^3-p\ell)+d+n_1-p\ell.$$
Since $\Delta\ge \deg(S_1)$, it suffices to prove that $2d+2n_1< p^3$, which follows from $p\geq 2$ and $p(d+n_1)\leq \deg F<p^4$. Therefore, $\Delta>\varepsilon-1$.
From Theorem~\ref{abc}, $R$ is a $p$-th power. Since $\gcd(A, N)=1$, the same holds for $N_0=\frac{N^{r+1}}{\gcd(N, N')}$, and thus every root of $N_0$ has multiplicity divisible by $p$. Let $\alpha$ be a root of $N$ with multiplicity $j\not\equiv 0\pmod p$. Hence its multiplicity in $N_0$ equals $j(r+1)-(j-1)=jr+1$, forcing that $jr\equiv -1\pmod p$. Since $tr+1=p$, we obtain  $j\equiv t\pmod p$.
\end{proof}

\subsection{Proof of Theorem~\ref{thm:p4}} Using Theorem~\ref{MillsBor} and Lemma~\ref{lem:aux}, one can easily check that all polynomials  $F\in  \F_{p^4}[x]$  listed  in  Theorem~\ref{thm:p4} are MVSPs  with $\# V_F\geq 3$.   Let $F \in \F_{p^4}[x]$ be an MVSP with $\#V_F\geq 3$.
From Theorem~\ref{thm:main1}, we can assume that $V_F=\mathcal U^v$, where $\mathcal U\subseteq \F_{p^4}$ is an $\F_{p^k}$-vector space of dimension $m\ge1$ such that $1\in\mathcal U$, $k|4$, $v|p^k-1$, and $(m, v)\ne (1, p^k-1)$. In particular, $mk\le 4$. We  divide the proof into two cases.
 
\begin{enumerate}[1.]
\item $v=1$. We have the following subcases:

\begin{enumerate}
\item $\mathcal U=\F_{p^e}$ with $e\in \mathcal \{1, 2, 4\}$. The cases $e=1$ and $e=2$ are covered by items (ii) and (iii) listed in  Theorem~\ref{thm:p4}, respectively. If $e=4$, then $\deg(F)=1$. In this case, up to post-compositions, we have $F(x)=x$, which is covered by item (i).

\item $m=2$, $k=1$, and $\mathcal U\ne \F_{p^2}$. In this case, $\F_{p^4}$ is the smallest subfield containing $\mathcal U$. Let $\{1, \beta\}$ be an $\F_p$-basis of $\mathcal U$, hence $\beta\in \F_{p^4}\setminus \F_p$ and 
\begin{align*}
A(x) &= \prod_{u\in \mathcal U}(x-u) = \prod_{i, j\in \mathbb{F}_p}(x-i-j\beta) \\
     &= x^{p^2}-(1+(\beta^p-\beta)^{p-1})x^p+(\beta^p-\beta)^{p-1}x.
\end{align*}
In this case, it is direct to verify that $M(A(x))=x^{p^4}-x$, where $M(x)=x^{p^2}+(1+(\beta^{p^3}-\beta^{p^2})^{p-1})x^p-(\beta^p-\beta)^{1-p}x$. According to Theorem~\ref{thm:main2}, we have $F(x)=M(h(x))$, where $h\in \F_{p^4}[x]$ is an MVSP with value set $\F_{p^4}$. In particular, $h(x)$ has degree one. Therefore, up to pre-compositions with affine maps, we have that  $h(x)=x$, and thus $F(x)=M(x)$, which is covered by item (iv).

\item $m=3$ and $k=1$.  In this case, $\F_{p^4}$ is the smallest subfield containing $\mathcal U$. If $A(x)=\prod_{u\in \mathcal U}(x-u)$, Theorem~\ref{thm:main2} entails that there exists a $p$-linearized polynomial $M\in \F_{p^4}[x]$ such that $A(M(x))=x^{p^4}-x$. The polynomial $M$ is clearly monic, and since $m=3$, we have that $M(x)$ has degree $p$. Hence $M(x)=x^p-\alpha x$ for some $\alpha\in \F_{p^4}^*$. Moreover, since $A(M(x))=x^{p^4}-x$, Lemma~\ref{lem:linear} entails that $x^p-\alpha x$ divides $x^{p^4}-x$, and thus $\alpha=\beta^{p-1}$ for some $\beta\in \F_{p^4}^*$. According to Theorem~\ref{thm:main2}, we have that $F(x)=M(h(x))$, where $h\in \F_{p^4}[x]$ is an MVSP with value set $\F_{p^4}$. In particular, $h(x)=ax+b$ with $a, b\in \F_{p^4}$ and $a\ne 0$. Hence $F(x)=(ax+b)^p-\beta^{p-1}(ax+b)$. Therefore, up to pre- and post-compositions with affine maps, we have  $F(x)=x^p-x$. The latter is covered by item (v).

\end{enumerate}

\item $v>1$. We have the following subcases:

\begin{enumerate}
\item $\mathcal U$ is a subfield of $\F_{p^4}$. Let $e\in \{1, 2, 4\}$ be the smallest positive integer such that $\mathcal U^v\subseteq \F_{p^e}$, that is, $\frac{p^{mk}-1}{v}$ divides $p^e-1$. We also set $t=\frac{v(p^e-1)}{p^{mk}-1}$. Observe that $t$ is a divisor of $p^e-1$. From Theorem~\ref{thm:p4-a}, it follows that $F(x)=f(x)^t$, where $f\in \F_{p^4}[x]$ is an MVSP with value set $V_f=\F_{p^e}$. The cases $e=1, 2$ are covered by items (ii) and (iii) listed in  Theorem~\ref{thm:p4}, respectively. If $e=4$, we have that $f$ is an MVSP with value set $V_f=\F_{p^4}$, and thus $\deg(f)=1$. In particular, up to pre- and post-compositions with affine maps, we have $f(x)=x$. Hence $F(x)=x^v$, which is covered by item (i). 

\item $\mathcal U$ is not a subfield of $\F_{p^4}$. If $k=2$, then the conditions $\F_{p^2}\subseteq \mathcal U \subseteq \F_{p^4}$  and $\#\mathcal U=p^{2m}$ imply that  either $\mathcal U=\F_{p^2}$ or  $\mathcal U=\F_{p^4}$, a contradiction.  Therefore, $k=1$ and $m=2, 3$. As in  the previous item, we use Theorem~\ref{thm:p4-a} and conclude that $F$ is contained in the family of item (iv), for $m=2$, and in the family of item (v), for $m=3$. 

\end{enumerate}

\end{enumerate}

\subsection{Proof of Theorem~\ref{FNC}}

\begin{proof}

Let \( d = \frac{q - 1}{p^e - 1} \), and suppose \( f(x) \in \mathbb{F}_{q}[x] \) is an MVSP whose value set is \( \mathbb{F}_{p^e}     \subseteq  \mathbb{F}_{q} \). If the polynomial \( y^d - f(x) \) is irreducible, then the \(\mathbb{F}_{q}\)-Frobenius nonclassicality of the curve \( \mathcal{F}: y^d = f(x) \) follows from \cite[Corollary 3.5]{Bor0}. This is because both \( f(x) \) and \( y^d \) are MVSPs over \( \mathbb{F}_{q} \) with the same value set \( S = \mathbb{F}_{p^e} \).

Conversely, assume that the curve \( y^d = f(x) \) is \(\mathbb{F}_{q}\)-Frobenius nonclassical. By \cite[Corollary 3.5]{Bor0}, it follows that both \( y^d \) and \( f(x) \) are MVSPs over \( \mathbb{F}_{q} \) with the same value set \( S \). In particular,  \( d \) divides \( q - 1 \), and \( S = \left( \mathbb{F}_{q} \right)^d \). Note that the condition $d<q-1$ gives $\#S>2$. Assuming that Conjecture~\ref{conj} is true,  since \( \mathbb{F}_{q} \) is a field, we have that
\[
f(x) \in \mathcal{P}\left( \left( \mathbb{F}_{q} \right)^d, q \right) = \left\{ g^{\frac{\left( p^e - 1 \right) d}{q - 1}} \mid g \in \mathcal{P}\left( \mathbb{F}_{p^e}, q \right) \right\},
\]
where \( \mathbb{F}_{p^e} \subseteq \mathbb{F}_{q} \) is the smallest field containing \( \left( \mathbb{F}_{q} \right)^d \). Hence, the curve \( \mathcal{F} \) is given by
\[
y^d = g(x)^{\frac{\left( p^e - 1 \right) d}{q - 1}}  \quad \text{for some } g \in \mathcal{P}\left( \mathbb{F}_{p^e}, q \right).
\]
Since \( d = \frac{q - 1}{p^e - 1} \cdot \frac{\left( p^e - 1 \right) d}{q - 1} \) and \( \mathcal{F} \) is irreducible, we conclude that \( \frac{\left( p^e - 1 \right) d}{q- 1} = 1 \). Thus  \( \mathcal{F} \) has the equation
\[
y^{\frac{q - 1}{p^e - 1}} = f(x),
\]
where \( f(x) \in \mathbb{F}_{q}[x] \) is an MVSP with value set \(  \mathbb{F}_{p^e} \subseteq \mathbb{F}_{q} \).

\end{proof}

\section{A Final Remark}

A natural extension of the MVSP problem is the characterization of rational functions $h=f / g \in \mathbb{F}_q(t)$ whose value set attains the minimal possible size, namely $\left\lceil\frac{q+1}{\operatorname{deg} h}\right\rceil$. This fundamental question is closely connected to whether the field extension $\mathbb{F}_q(x) / \mathbb{F}_q(h(x))$ is Galois and to the structure of its Galois group. It is also related to the geometry and arithmetic of the curves defined by
$$
f_1(x) g_2(y)-g_1(x) f_2(y)=0,
$$
where $f_i / g_i \in \mathbb{F}_q(t)(i=1,2)$ are rational functions with the same minimal value set \cite{BBQ21}.

Despite the clear relationship between the polynomial and rational-function cases, the latter introduces additional subtleties, particularly in the treatment of poles. Nevertheless, a complete understanding of MVSPs is a necessary step toward resolving the rational-function problem. We believe that the framework and techniques developed here provide a solid foundation for extending the theory to rational functions, thereby establishing concrete links to broader questions in arithmetic and geometry.

%\section*{Conflict of Interest Statement}
%On behalf of all authors, the corresponding author states that there is no conflict of interest.

%\section*{Data Availability Statement}
%On behalf of all authors, the corresponding author states that no datasets were generated or analysed during the current study.

\section*{Acknowledgements}
We thank the anonymous reviewers for their thoughtful feedback and valuable suggestions, which have significantly improved this work.

\end{document}